\documentclass[11pt]{article}

\usepackage[a4paper,margin=1in]{geometry}
\usepackage[T1]{fontenc}
\usepackage[utf8]{inputenc}
\usepackage{lmodern}
\usepackage{microtype}
\usepackage{amsmath,amssymb,amsthm,mathtools,mathrsfs,esint}
\usepackage{enumitem}
\usepackage[colorlinks=true,citecolor=blue,linkcolor=blue,urlcolor=blue]{hyperref}
\usepackage{indentfirst}
\allowdisplaybreaks
\setlist[itemize]{topsep=3pt,itemsep=1pt,parsep=0pt}

\newtheorem{theorem}{Theorem}[section]
\newtheorem{proposition}[theorem]{Proposition}
\newtheorem{lemma}[theorem]{Lemma}
\newtheorem{corollary}[theorem]{Corollary}
\newtheorem{definition}[theorem]{Definition}

\newcommand{\RCD}{\mathrm{RCD}}
\newcommand{\CAT}{\mathrm{CAT}}
\newcommand{\dist}{\mathsf d}
\newcommand{\meas}{\mathfrak m}
\newcommand{\setR}{\mathbb R}
\newcommand{\di}{\,\mathrm d}
\newcommand{\Lip}{\operatorname{Lip}}
\newcommand{\Lipbs}{\operatorname{Lip}_{\mathrm{bs}}}
\newcommand{\lip}{\operatorname{lip}}
\newcommand{\loc}{\mathrm{loc}}
\newcommand{\ks}{\mathsf{KS}}
\newcommand{\kse}{\mathsf{ks}}
\newcommand{\Leb}{\mathscr L}
\newcommand{\eps}{\varepsilon}
\newcommand{\supp}{\operatorname{supp}}
\newcommand{\norm}[1]{\left\lVert #1\right\rVert}

\title{Lipschitz regularity of Sobolev maps from
$\RCD(K,N)$ spaces to $\CAT(0)$ spaces}
\author{Bang-Xian Han \and Zeyu Wang}
\date{October 29, 2026}
\begin{document}

\maketitle

\begin{abstract}
We establish a quantitative criterion for local Lipschitz
regularity of Sobolev maps in the sense of Korevaar and Schoen from finite-dimensional \(\RCD(K,N)\) spaces into complete \(\CAT(0)\) spaces, without assuming harmonicity. We quantify the failure of local energy minimality by the
harmonic replacement potential, a dyadic sum of the square roots
of normalized energy gaps between a map and its harmonic
replacements. We prove that this potential, together with an
averaged energy term, controls the minimal weak upper gradient
pointwise, with constants depending only on the
curvature-dimension bounds and the scale. In particular, local boundedness of the replacement potential implies that the map admits a locally Lipschitz representative.
\end{abstract}

\noindent\textbf{Keywords.}
Harmonic maps; harmonic replacement; $\RCD$ spaces; $\CAT(0)$ targets;
Lipschitz regularity; Green functions.

\smallskip
\noindent\textbf{2020 Mathematics Subject Classification.}
58E20, 53C23, 30L99, 46E35.

\tableofcontents

\section{Introduction\label{sec:introduction}}
\subsection{Background and motivation }
Harmonic maps between Riemannian manifolds are the critical points of the Dirichlet energy. The study of harmonic maps is closely related to minimal surface theory and to Bochner's method; see \cite{Morrey1948,SchoenYau1997}. For nonpositively curved targets, the Dirichlet energy is convex along geodesic homotopies. For the classical theory of harmonic maps into nonpositively curved manifolds, we refer to Eells--Sampson \cite{EellsSampson64}, and, for manifolds with boundary, Hamilton \cite{Hamilton1975}. For general smooth targets, the
interior and boundary regularity theory was developed by
Schoen--Uhlenbeck \cite{SchoenUhlenbeck1982,SchoenUhlenbeck1983}; see
also \cite{HildebrandtKaulWidman1977,JostMeier1983,GiaquintaGiusti1982,
GiaquintaGiusti1984} for related variational results.

The extension to singular spaces requires an energy defined without
coordinates on the target. In the remarkable work of Gromov and Schoen \cite{GromovSchoen}, the authors proposed a variational approach for the theory of harmonic mappings to the setting of mappings into singular metric spaces, along with important applications to rigidity problems for certain discrete groups. Korevaar--Schoen \cite{KorevaarSchoen}
developed an intrinsic Sobolev and energy theory for maps from
Riemannian domains into complete metric spaces and established
existence and uniqueness for the Dirichlet problem with
nonpositively curved targets. Independently, Jost
\cite{Jost94} used averages of squared distances, equilibrium maps,
and $\Gamma$-convergence to develop a variational approach already
allowing metric source spaces. He subsequently developed the theory
through convex functionals and generalized Dirichlet forms
\cite{Jost95,Jost97}. Sturm \cite{Sturm2001,Sturm2005}
pursued a parallel route through Dirichlet forms and Markov semigroups.
In this paper, harmonic maps are understood to be local minimizers
of the Korevaar--Schoen energy.

The central regularity question is whether the local Lipschitz
continuity proved by Korevaar--Schoen \cite{KorevaarSchoen} for smooth
Riemannian sources persists on singular domains. Lin \cite{Lin1997}
obtained H\"older regularity results for Alexandrov sources with
curvature bounded below, while Jost \cite{Jost97} established related
estimates in the framework of generalized Dirichlet forms.
Lin conjectured local
Lipschitz continuity in the Alexandrov setting, and Jost posed a
related question in a more general framework
\cite[p.~38]{Jost1998}. Zhang--Zhu \cite{ZhangZhu2018} resolved
Lin's conjecture by proving local Lipschitz continuity for harmonic
maps from Alexandrov sources with curvature bounded below into
NPC spaces.

Finite-dimensional $\RCD(K,N)$ spaces \cite{AGSRCD14,EKS15}
provide a framework for studying this regularity problem under
a synthetic lower Ricci curvature bound. Gigli--Tyulenev
\cite{GigliTulyenev21} developed the Korevaar--Schoen energy
and Dirichlet theory in this setting, with complete $\CAT(0)$
targets. Their results on strongly rectifiable spaces apply
also to collapsed $\RCD(K,N)$ spaces and give a representation
of the energy density in terms of the metric differential.

The regularity question is whether a lower Ricci curvature
bound suffices for quantitative interior Lipschitz estimates.
For smooth Riemannian sources and complete $\CAT(0)$ targets,
Zhang--Zhong--Zhu \cite{ZhangZhongZhu2019} established such
estimates using a distributional Bochner inequality for the
squared pointwise Lipschitz constant. In the $\RCD$ setting,
local Lipschitz continuity, with quantitative interior estimates,
was established in independent works by
Gigli \cite{GigliLipschitz} and Mondino--Semola
\cite{MondinoSemola2026}. Gigli proved a version of the
Zhang--Zhong--Zhu inequality in this setting.
Mondino--Semola also established a Bochner inequality for the
squared pointwise Lipschitz constant containing the nonnegative
term $|D(\operatorname{lip}u)|^2$. Boundary regularity for
such maps was studied by Zhang--Zhu
\cite{ZhangZhuBoundary2024,ZhangZhuOptimal2025}.

These Bochner inequalities concern the squared pointwise
Lipschitz constant, which, for smooth maps between Riemannian
manifolds, equals the squared operator norm of the differential.
The squared energy density corresponds instead, up to normalization,
to the squared Hilbert--Schmidt norm. This distinction is
important for the comparison argument below, where harmonic
replacement energy gaps control the $L^2$-difference between
the minimal weak upper gradients of a map and its harmonic
replacement.

Our question concerns maps that need not locally minimize the
Dirichlet energy: \emph{How much may a Sobolev map deviate from local Dirichlet minimality while remaining locally Lipschitz?}
Such maps arise naturally in variational models of phase
transitions and free boundaries, where a Dirichlet term is
combined with a potential energy. Beyond these
variational applications, our original motivation for this question
came from a different direction, the local Lipschitz regularity of
the time discretization of the harmonic map heat flow, whose
comparison structure leads to the same potential.

The first step is to identify a suitable quantity measuring
the failure of local energy minimality. On a ball $B$, we
compare $u$ with its unique harmonic replacement $h_B$, which minimizes
the Dirichlet energy with the same boundary values. The resulting nonnegative energy gap \(E(u;B)-E(h_B;B)\)
is defined for every finite-energy map, without reference to
a target-valued equation or a tension field.

To pass from this local energy gap to a pointwise estimate, we
adapt the comparison method of nonlinear potential theory. In
the elliptic setting, one compares a solution $u$ of an
inhomogeneous equation $Lu=f$ on a ball $B$ with its harmonic
replacement, the solution of the homogeneous equation $Lu=0$ on
$B$ with the same boundary values, and the comparison errors,
accumulated over dyadic balls, are controlled by a potential of
the data $f$ (of Riesz or Wolff type), which yields the
pointwise estimates for $u$; see \cite{KuusiMingioneGuide14,DuzaarMingione10}.
In our setting no equation is available; the
harmonic replacement $h_B$ still plays the role of the homogeneous
solution, and the energy gap $E(u;B)-E(h_B;B)$ plays the role of
the data. Accumulating these gaps over dyadic balls defines the
\emph{harmonic replacement potential}; see
Definition~\ref{def:replacement-potential} below.

\subsection{Main result}
Let $(X,\dist,\meas)$ be an $\RCD(K,N)$ space with full support,
where $K\in\setR$ and $1<N<\infty$, and let $(Y,\dist_Y)$ be a complete
$\CAT(0)$ space. Fix a bounded open set $\Omega\subset X$,
a point $o\in X$, and $R>0$ such that
\begin{equation}
 B_{8R}(o)\Subset\Omega,
 \qquad X\setminus B_{16R}(o)\neq\varnothing.
 \label{eq:intro-local-setting}
\end{equation}
A ball $B=B_r(x)$ is called \emph{admissible} if $0<r\le R$ and
$B_r(x)\subset B_{2R}(o)$.

We write $\ks^{1,2}(\Omega,Y)$ for the space of
$L^2$ maps with finite Korevaar--Schoen energy;
see Definition~\ref{def:ks-space}. This finite-energy assumption on a
fixed domain is stronger than requiring $L^2$-integrability and finite
energy only on relatively compact open subsets. All estimates below
are interior. If a map satisfies these local conditions on a larger
open set $\Omega_{\mathrm{old}}$ with
$B_{8R}(o)\Subset\Omega_{\mathrm{old}}$, properness of $X$ allows us to
choose a bounded open neighborhood $\Omega_0$ of
$\overline{B_{8R}(o)}$ such that $\Omega_0\Subset\Omega_{\mathrm{old}}$.
The results apply to $u|_{\Omega_0}\in\ks^{1,2}(\Omega_0,Y)$.
Restriction leaves the comparison balls, harmonic replacements,
energy gaps, and replacement potentials unchanged.

Let $n$ denote
the essential dimension of $X$
\cite{MondinoNaber19,BrueSemola20}. For
$u\in\ks^{1,2}(\Omega,Y)$, write $\mathrm du$ for the differential
of a metric-valued Sobolev map in the sense of
\cite{GigliPasqualettoSoultanis2020}. Throughout, $|\mathrm du|$
denotes its pointwise operator norm, equivalently the minimal
$2$-weak upper gradient, whereas $|\mathrm du|_{\mathrm{HS}}$
denotes its Hilbert--Schmidt norm. We use the density and energy
normalization
\begin{equation}
 \begin{aligned}
 e_2[u]&:=(n+2)^{-1/2}|\mathrm du|_{\mathrm{HS}},\\
 E(u;U)&:=\frac{n+2}{2}\int_U e_2[u]^2\di\meas
          =\frac{n+2}{2}\mathrm E_2^U(u).
 \end{aligned}
 \label{eq:intro-energy-normalization}
\end{equation}
We use the energy density notation of Mondino--Semola
\cite{MondinoSemola2026}; $\mathrm E_2^U$ denotes the
Korevaar--Schoen energy of Gigli--Tyulenev
\cite[Theorem~5.7 and Proposition~6.7]{GigliTulyenev21}. In particular,
\begin{equation}
 |\mathrm du|^2\le(n+2)e_2[u]^2\le n |\mathrm du|^2
 \qquad\meas\text{-a.e.}
 \label{eq:intro-trace-op}
\end{equation}
For smooth Euclidean maps, these are $|\mathrm du|=\|Du\|_{\mathrm{op}}$ and
$e_2[u]=(n+2)^{-1/2}\|Du\|_{\mathrm{HS}}$; for scalar maps,
$|\mathrm du|=\sqrt{n+2}\,e_2[u]=|Du|$. Thus $E$ agrees with the usual scalar Dirichlet
energy. The representation by quadratic forms, used in the comparison
argument, is recorded in Theorem~\ref{thm:irc-sobolev-energy}.

For an admissible ball $B$ and $u\in\ks^{1,2}(B,Y)$, following \cite[Definition~5.8]{GigliTulyenev21}, let
\[
 \ks^{1,2}_u(B,Y)
 :=\{v\in\ks^{1,2}(B,Y):\dist_Y(u,v)\in W^{1,2}_0(B)\},
\]
and denote by
\[
 h_B:=\operatorname*{argmin}_{v\in\ks^{1,2}_u(B,Y)}E(v;B)
\]
the unique harmonic replacement of $u$ in $B$. Here the boundary condition is understood in the Sobolev sense; see \cite[Theorem~6.4]{GigliTulyenev21} and Proposition~\ref{prop:irc-exact}. 

\begin{definition}[Harmonic replacement potential]
\label{def:replacement-potential}
Let $u\in\ks^{1,2}(\Omega,Y)$. For every admissible ball
$B_r(y)$, define the replacement energy gap and the
normalized replacement defect by
\begin{equation}
 \begin{aligned}
 G_u(B_r(y))
 &:=E(u;B_r(y))-E(h_{B_r(y)};B_r(y)),\\
 \delta_u(B_r(y))
 &:=\left(\frac{G_u(B_r(y))}{\meas(B_r(y))}\right)^{1/2}.
 \end{aligned}
 \label{eq:intro-gap}
\end{equation}
For $x\in B_R(o)$, the harmonic replacement potential of $u$
at scale $R$ is
\begin{equation}
 \mathcal P_Ru(x)
 :=\sum_{j=0}^{\infty}
 \delta_u\bigl(B_{2^{-j}R}(x)\bigr).
 \label{eq:intro-potential}
\end{equation}
The series is allowed to take the value $+\infty$.
\end{definition}

We also use the averaged energy
\[
 \mathcal E_s(u;x)
 :=\left((n+2)\fint_{B_s(x)}e_2[u]^2\di\meas\right)^{1/2},
 \qquad B_s(x)\subset\Omega,
\]
and the essential oscillation
\[
 \operatorname*{ess\,osc}_{A}u
 :=\operatorname*{ess\,sup}_{(y,z)\in A\times A}
   \dist_Y(u(y),u(z)),
\]
where the essential supremum is taken with respect to
$(\meas|_A)\otimes(\meas|_A)$.

\begin{theorem}
\label{thm:replacement-potential}
Assume \eqref{eq:intro-local-setting} and let
$u\in\ks^{1,2}(\Omega,Y)$. There exists a constant
$C=C(N,K_-R^2)$, where $K_-:=\max\{-K,0\}$, such that, for
$\meas$-almost every $x\in B_R(o)$,
\begin{equation}
 |\mathrm du|(x)
 \leq C\bigl[\mathcal E_{2R}(u;o)+\mathcal P_Ru(x)\bigr].
 \label{eq:main-pointwise-estimate}
\end{equation}
In particular, if
\begin{equation}
 \sup_{x\in B_R(o)}\mathcal P_Ru(x)<\infty,
 \label{eq:main-uniform-potential}
\end{equation}
then $u$ admits a representative $\widetilde u$ on $B_{R/2}(o)$ satisfying
\begin{equation}
 \Lip\bigl(\widetilde u|_{B_{R/2}(o)}\bigr)
 \leq C\left[\mathcal E_{2R}(u;o)
 +\sup_{x\in B_R(o)}\mathcal P_Ru(x)\right].
 \label{eq:main-lipschitz-estimate}
\end{equation}
If, in addition,
\[
 \operatorname*{ess\,osc}_{B_{2R}(o)}u<\infty,
\]
then $\mathcal E_{2R}(u;o)$ in both
\eqref{eq:main-pointwise-estimate} and
\eqref{eq:main-lipschitz-estimate} may be replaced by
\[
 \frac{\operatorname*{ess\,osc}_{B_{2R}(o)}u}{R}.
\]

\end{theorem}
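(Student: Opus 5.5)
We will run a Campanato-type comparison scheme on the dyadic balls $B_j:=B_{2^{-j}R}(x)$, $j\ge0$, with $x\in B_R(o)$ a Lebesgue point of $|\mathrm du|^2$. We compare $u$ with its harmonic replacements $h_j:=h_{B_j}$. The scheme needs three ingredients, which we assume available from the earlier sections.

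\textbf{(i) Energy gap controls the gradient difference.} We need an $L^2$ comparison
\[
\fint_{B_j}\bigl||\mathrm du|-|\mathrm dh_j|\bigr|^2\di\meas\le C\,\delta_u(B_j)^2 .
\]
This should come from the quadratic-form representation of the energy in Theorem~\ref{thm:irc-sobolev-energy}, combined with the $\CAT(0)$ convexity of the energy along geodesic homotopies $t\mapsto u_t$ from $h_j$ to $u$. Minimality of $h_j$ and strict convexity give
\[
E(u;B_j)-E(h_j;B_j)\ge c\int_{B_j}|\mathrm d\,\dist_Y(u,h_j)|^2 .
\]
We then pass from the Hilbert--Schmidt densities to the operator norms via \eqref{eq:intro-trace-op}, using $\bigl||\mathrm du|-|\mathrm dh|\bigr|\lesssim|\mathrm d(u,h)|_{\mathrm{HS}}$. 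This last inequality is what the parallelogram-type inequality for $\CAT(0)$ energies provides. This step is exactly why the Hilbert--Schmidt norm appears in the definition of $E$.

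\textbf{(ii) Interior estimate for harmonic maps.} We use the quantitative interior Lipschitz estimate of Gigli / Mondino--Semola, together with its Campanato (excess-decay) form. For harmonic $h$ on $B_r$ and $\theta\in(0,1/4)$:
\[
\sup_{B_{r/2}}|\mathrm dh|\le C\Bigl(\fint_{B_r}|\mathrm dh|^2\Bigr)^{1/2},
\qquad
\Bigl(\fint_{B_{\theta r}}\bigl||\mathrm dh|-(|\mathrm dh|)_{B_{\theta r}}\bigr|^2\Bigr)^{1/2}\le C\theta^{\alpha}\Bigl(\fint_{B_r}|\mathrm dh|^2\Bigr)^{1/2}.
\]
The oscillation decay should follow from the Bochner inequality with the $|D\lip h|^2$ term (Mondino--Semola). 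That term makes $\lip h$ a $W^{1,2}$ subsolution, and the De Giorgi/Moser estimates on $\RCD(K,N)$ then apply. We use doubling and the local Poincaré inequality with constants depending on $N$ and $K_-R^2$.

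\textbf{(iii) Iteration.} Set $a_j:=\bigl(\fint_{B_j}|\mathrm du|^2\bigr)^{1/2}$. Using (i), the sup bound in (ii), and doubling, we obtain
\[
a_{j+1}\le\Bigl(\fint_{B_{j+1}}|\mathrm dh_j|^2\Bigr)^{1/2}+C\delta_u(B_j)\le C\bigl(\fint_{B_j}|\mathrm dh_j|^2\bigr)^{1/2}+C\delta_u(B_j).
\]
Iterating this directly would lose a factor $C$ at every step. So we instead iterate the excess
\[
\phi_j:=\Bigl(\fint_{B_j}\bigl||\mathrm du|-(|\mathrm du|)_{B_j}\bigr|^2\Bigr)^{1/2}
\]
over a fixed ratio $\theta$ chosen so that $C\theta^\alpha\le1/2$. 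This gives $\phi_{j+1}\le\tfrac12\phi_j+C\theta^\alpha a_j+C_\theta\delta_u(B_j)$. The averages change by at most $|(|\mathrm du|)_{B_{j+1}}-(|\mathrm du|)_{B_j}|\le C_\theta\phi_j$.

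The obstacle is the term $\theta^\alpha a_j$, which is not small relative to the averages. The standard remedy is to apply the decay not to $|\mathrm dh|$ itself but to the Lipschitz-type quantity in a form where the harmonic $h$ satisfies
\[
\sup_{B_{\theta r}}|\mathrm dh|\le(1+C\theta^{\alpha})\,\fint_{B_r}|\mathrm dh|+\dots
\]
If that form fails, we accept a bound of the form $\sum\phi_j\lesssim\phi_0+\sum\delta_u(B_j)$ and combine it with the trivial bound $a_0\le C\mathcal E_{2R}(u;o)$, which holds since $B_R(x)\subset B_{2R}(o)$ and doubling applies. Summing the averages then gives
\[
|\mathrm du|(x)=\lim_j(|\mathrm du|)_{B_j}\le C\bigl[\mathcal E_{2R}(u;o)+\mathcal P_Ru(x)\bigr],
\]
with the dyadic ratio $\theta$ versus $2^{-1}$ absorbed by doubling of $\delta_u$ up to constants. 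Getting this contraction step to close is the main difficulty, and we expect the Bochner/subsolution structure of $\lip h$ to be what makes it work.

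\textbf{(iv) Consequences.} The Lipschitz representative follows from the uniform bound $|\mathrm du|\in L^\infty(B_R(o))$. On $\RCD$ spaces the Sobolev-to-Lipschitz property holds for metric-valued maps: compose with $\dist_Y(\cdot,p)$ and apply the scalar property on the length space $B_{R/2}$. Geodesics between points of $B_{R/2}(o)$ stay inside $B_R(o)$, so the constant is preserved.

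For the oscillation variant, we apply a Caccioppoli inequality for $u$ relative to a constant $p=u(z)$, with $z$ essentially generic. The function $\dist_Y(u,p)^2$ is subharmonic up to the gap error; alternatively, we compare with $h_{B_{2R}}$ and use the Caccioppoli inequality for harmonic maps. This gives
\[
\mathcal E_{R}(u;o')^2\lesssim R^{-2}(\operatorname*{ess\,osc}u)^2+\delta_u(B)^2 ,
\]
and the last term is absorbed into $\mathcal P_Ru$. We then rerun the argument at half scale, adjusting constants.
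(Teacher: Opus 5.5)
Your step (iii) does not close, and this is the heart of the theorem; you flag the difficulty yourself. The Campanato repair fails on two counts. First, the excess decay claimed in (ii) is not available. The Bochner inequality, even with the $|D\lip h|^2$ term, only makes $|\mathrm dh|^2$ (or $\lip h$) a subsolution, and De Giorgi--Moser theory for subsolutions gives $L^\infty$ bounds, not oscillation decay. Second, even granting it, your decay bounds the excess at scale $\theta r$ by the \emph{full} average on $B_r$, not by the excess there. So $\phi_{j+1}\le\frac12\phi_j+C\theta^\alpha a_j+C_\theta\delta_u(B_j)$ carries the non-summable term $C\theta^\alpha\sum_j a_j$, and substituting $a_j\le a_0+C_\theta\sum_{i<j}\phi_i$ is circular. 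Likewise, a fixed-ratio bound $\sup_{B_{\theta r}}|\mathrm dh|\le(1+C\theta^\alpha)\fint_{B_r}|\mathrm dh|$ iterates to $(1+C\theta^\alpha)^j\to\infty$. The iteration needs a recursion whose coefficient in front of the previous average is exactly $1$, that is, averages that are exactly monotone on subharmonic functions. Ball averages are not monotone in this sense, already on Riemannian manifolds. Moreover, for $K<0$ the quantity $|\mathrm dh|^2$ is not subharmonic at all, since \eqref{eq:irc-bochner} only gives $\Delta|\mathrm dh|^2\ge-2K_-|\mathrm dh|^2$.

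The missing idea, which the paper supplies, has three parts.
\begin{enumerate}[label=\textup{(\alph*)}]
\item \emph{Green-band averages.} The paper passes to $Z=X\times\setR^3$, where the Green function $G_P$ of $P=(x,0)$ exists with $G_P\simeq\Phi_x(r)=r^2/V_x(r)$ and $|DG_P|\lesssim\Phi_x(r)/r$. It averages against $\sigma_j=\mathbf 1_{\{\alpha_j<G_P<\beta_j\}}|DG_P|^2\nu/(\beta_j-\alpha_j)$. The flux identity makes these probability measures. Testing with a $C^{1,1}$ function $\Psi(G_P)$ gives $\int w\di\sigma_{j+1}\le\int w\di\sigma_j$ for every weakly subharmonic $w$. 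The gradient bound gives projected density $\lesssim\mathbf 1_{B_{s_j}(x)}/\meas(B_{s_j}(x))$.
\item \emph{Curvature weight.} The weight $b(z)=\cosh(\sqrt{2K_-}\,z_1)$ makes $b(z)|\mathrm dh|^2(y)$ genuinely subharmonic on $Z$.
\item \emph{Loss-free recursion.} Set $A_j=(\int b|\mathrm du|^2\di\sigma_j)^{1/2}$ and take one replacement $h_j$ on $\widehat B_j=B_{4s_j}(x)$, which covers both bands $j$ and $j+1$. The weighted $L^2$ comparison and the monotonicity then give $A_{j+1}\le A_j+2C\delta_u(\widehat B_j)$ with no multiplicative loss. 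The radii $4s_j=2^{-4-mj}R$ are dyadic, so the sum is bounded by $\mathcal P_Ru(x)$.
\end{enumerate}

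Two secondary points also need fixing.
\begin{itemize}
\item Your route to (i) through $\int_B|D\dist_Y(u,h)|^2\le 2G_u(B)$ cannot control $\bigl|\,|\mathrm du|-|\mathrm dh|\,\bigr|$. Already for $Y=\setR^2$, take $h\equiv0$ and $u=\rho(\cos\theta,\sin\theta)$ with $\rho\in W^{1,2}_0$ and $\theta$ highly oscillating: $|D\dist_Y(u,h)|=|D\rho|$ stays small while $|\mathrm du|$ is large. The paper instead applies Busemann convexity of the midpoint map $[u,h]_{1/2}$ to the directional metric differentials. This yields $\int_B\int_{S^{n-1}}|s_{u,x}-s_{h,x}|^2\di\sigma_n\di\meas\le\frac4nG_u(B)$, which the fourth-moment identity converts to operator norms.
\item For the oscillation variant, you need $\dist_Y(h,y_0)\le\operatorname*{ess\,osc}u$ on the ball before invoking the interior gradient estimate. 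The paper obtains this by projecting $h$ onto $\overline B_{O_x}(y_0)$ and using uniqueness of the replacement.
\end{itemize}
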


To the best of our knowledge, the pointwise estimate above, which
bounds the minimal weak upper gradient of a finite-energy map by
its averaged energy and the accumulated energy gaps of its
harmonic replacements, has not previously been stated in this form,
even for smooth Riemannian sources.
Theorem~\ref{thm:replacement-potential} establishes it for
arbitrary finite-energy maps from $\RCD(K,N)$ spaces into
complete $\CAT(0)$ targets, with constants depending only
on $N$ and $K_-R^2$.

Local boundedness of the replacement potential is sufficient, but not
necessary, for local Lipschitz regularity. For instance, let
$u(x)=|x|$ on $\Omega=B_{10R}(0)\subset\mathbb R^n$, with $n\ge2$.
For $0<r\le R$, its harmonic replacement on $B_r(0)$ is the constant
$r$. Thus $\delta_u(B_r(0))=1/\sqrt2$ and
$\mathcal P_Ru(0)=\infty$, although $u$ is $1$-Lipschitz.

\subsection{Strategy of the proof.}
On a fixed ball \(B\), we first show that the replacement energy gap \(G_u(B)\) controls the squared \(L^2\)-distance between \(|\mathrm du|\) and \(|\mathrm dh|\), where \(h=h_B\). Set
\[
J(x):=\int_{S^{n-1}}
|s_{u,x}(\xi)-s_{h,x}(\xi)|^2\,\di \sigma_n(\xi),
\]
where \(s_{u,x}\) and \(s_{h,x}\) are the metric differentials expressed in a common source frame, and \(\sigma_n\) is normalized surface measure. Then
\[
\int_B\bigl|\,|\mathrm du|-|\mathrm dh|\,\bigr|^2\,\di \mathfrak m
\le \frac{n(n+2)}2\int_B J\,\di \mathfrak m
\le 2(n+2)G_u(B).
\]
The first inequality follows by comparing the two Hilbertian seminorms through the fourth-moment identity \eqref{eq:irc-fourth-moment}; the second follows from the midpoint convexity and the minimality of \(h\). The full argument is given in the proof of Proposition~\ref{prop:irc-operator-gap}.

For a harmonic replacement \(h\), the Zhang--Zhong--Zhu inequality, 
recalled in Proposition~\ref{prop:irc-exact}, gives
\[
  \frac12\Delta(|\mathrm dh|^2) \ge K |\mathrm dh|^2\,\meas.
\]
Consequently, with
\[
  b(z)=\cosh\!\bigl(\sqrt{2K_-}\,z_1\bigr),
\]
the function \(b(z)|\mathrm dh|^2(y)\) is subharmonic on the corresponding
product domain. 

We next compare local averages of \(|\mathrm du|\) at successive scales.
For a fixed point \(x\), the averaging is taken against the
probability measures \(\sigma_j\) constructed in
Theorem~\ref{thm:gbl-averages}: they live on thick Green bands in
\(X\times\mathbb R^3\) and are built from Green-level mean-value
formulas\ \cite{Watson91,Ni07}.
Set
\[
  A_j:=
  \left(
    \int b(z)|\mathrm du|^2(y)\,\di\sigma_j(y,z)
  \right)^{1/2}.
\]
This gives
\[
\begin{aligned}
  A_{j+1}
  &\le A_j+2C\delta_u(\widehat B_j).
\end{aligned}
\]

Since the radii of \(\widehat B_j\) form a fixed dyadic
subsequence and \(A_j\to |\mathrm du|(x)\) at Lebesgue points of \(|\mathrm du|^2\),
iteration yields
\[
  |\mathrm du|(x)\le A_0+C\mathcal P_Ru(x).
\]
The projected-density bound gives
\(A_0\le C\mathcal E_{2R}(u;o)\), yielding the pointwise estimate~\eqref{eq:main-pointwise-estimate}. For the oscillation estimate, comparison with \(h_{B_R(x)}\)
and the interior gradient estimate give
\[
  A_0\le C\left[
    \frac{\operatorname*{ess\,osc}_{B_{2R}(o)}u}{R}
    +\delta_u(B_R(x))
  \right],
\]
where \(\delta_u(B_R(x))\le\mathcal P_Ru(x)\).

If \(M_R:=\sup_{x\in B_R(o)}\mathcal P_Ru(x)<\infty\),
the above estimate gives an \(L^\infty\) bound for \(|\mathrm du|\)
on \(B_R(o)\). By the local metric Sobolev-to-Lipschitz
property stated in Lemma~\ref{lem:irc-sobolev-lipschitz},
\(u\) therefore admits a representative \(\widetilde u\) satisfying
\[
  \Lip\bigl(\widetilde u|_{B_{R/2}(o)}\bigr)
  \le C\bigl[\mathcal E_{2R}(u;o)+M_R\bigr].
\]
Applying the same argument to the pointwise oscillation estimate
gives the corresponding Lipschitz bound with
\(\mathcal E_{2R}(u;o)\) replaced by
\(R^{-1}\operatorname*{ess\,osc}_{B_{2R}(o)}u\).
The full argument is given in Section~\ref{sec:proof-replacement}.

\section{Preliminaries}
\label{sec:preliminaries}
Throughout, $(X,\dist,\meas)$ and $(Y,\dist_Y)$ satisfy the assumptions stated in the Introduction. All metric-valued maps are Borel
and essentially separably valued, and maps that agree
$\meas$-almost everywhere are identified. We retain the admissible
balls and the energy normalization fixed there. Unless additional
parameters are indicated, constants depend only on $N$ and
$K_-R^2$, where $K_-:=\max\{-K,0\}$.

\subsection{Geometric analysis on RCD spaces}
\label{subsec:ms-background}
We recall the scalar Sobolev calculus developed in
\cite{AGS14,AGSRCD14}.
We denote by \(\Lip(X)\) and \(\Lipbs(X)\) the spaces of
real-valued Lipschitz functions on \(X\) and those with bounded
support, respectively. For \(f\in\Lip(X)\), its slope is
\[
  \lip f(x):=
  \limsup_{\substack{y\to x\\y\ne x}}
  \frac{|f(y)-f(x)|}{\dist(x,y)},
\]
with \(\lip f(x)=0\) at isolated points.
We write \(\Lip(f)\) for the Lipschitz constant of \(f\).

For \(f\in L^2(X,\meas)\), the Cheeger energy is defined by
\[
  \mathrm{Ch}(f):=
  \inf\left\{
    \liminf_{k\to\infty}\frac12\int_X(\lip f_k)^2\di\meas
    \;:\;
    f_k\in\Lipbs(X),\quad
    f_k\to f \text{ in } L^2(X,\meas)
  \right\}.
\]
The Sobolev space
\[
  W^{1,2}(X):=
  \{f\in L^2(X,\meas):\mathrm{Ch}(f)<\infty\}
\]
is equipped with the norm
\[
  \|f\|_{W^{1,2}(X)}^2
  :=\|f\|_{L^2(X,\meas)}^2+2\mathrm{Ch}(f).
\]

For \(f\in W^{1,2}(X)\), a nonnegative function
\(g\in L^2(X,\meas)\) is a relaxed slope of \(f\) if there
exist \(f_k\in\Lipbs(X)\) and \(\widetilde g\in L^2(X,\meas)\)
such that
\[
  f_k\to f,\qquad
  \lip f_k\rightharpoonup\widetilde g
  \quad\text{in }L^2(X,\meas),
  \qquad
  \widetilde g\le g\quad\meas\text{-a.e.}
\]
The set of relaxed slopes is nonempty, closed, and convex.
Its unique element of minimal \(L^2\)-norm is denoted by
\(|Df|\). It coincides with the minimal \(2\)-weak upper
gradient of \(f\), and
\[
  \mathrm{Ch}(f)=\frac12\int_X|Df|^2\di\meas.
\]
We shall also use the Sobolev-to-Lipschitz property:
if \(f\in W^{1,2}(X)\) satisfies \(|Df|\le L\)
\(\meas\)-almost everywhere, then \(f\) admits an
\(L\)-Lipschitz representative; the local form used below
is recorded in \cite[Proposition~2.10]{GigliViolo2023}.

For an open set $U\subset X$, let $\Lip_c(U)$ denote the
Lipschitz functions with compact support in $U$. Following
\cite[Section~2.1.3]{GigliViolo2023}, we say that
$f\in L^2_{\loc}(U)$ belongs to $W^{1,2}_{\loc}(U)$ if
$\chi f$ belongs to
$W^{1,2}(X)$ for every $\chi\in\Lip_c(U)$.
For such a function, the minimal weak upper gradient is
defined locally: on each open set $V\Subset U$, we set
$|Df|:=|D(\chi f)|$ almost everywhere on $V$, where
$\chi\in\Lip_c(U)$ equals one on $V$. By locality, this
definition is independent of the choice of $\chi$. We also define the spaces
\[
\begin{aligned}
 W^{1,2}(U)
 &:= \bigl\{f\in W^{1,2}_{\loc}(U)\mid f,|Df|\in L^2(U)\bigr\},\\
 W^{1,2}_0(U)
 &:= \overline{\Lip_c(U)}^{\,W^{1,2}(X)}\subset W^{1,2}(X).
\end{aligned}
\]
We shall use the following property of $W^{1,2}_0(U)$:
for $f\in W^{1,2}_0(U)$ and $g\in W^{1,2}(U)$,
\begin{equation}
 |g|\le f\quad\meas\text{-a.e. on }U
 \quad\Longrightarrow\quad
 g\in W^{1,2}_0(U).
 \label{eq:irc-wzero-domination}
\end{equation}
see \cite[Proposition~5.2]{GigliTulyenev21}.

A function $v\in W^{1,2}_{\loc}(U)$ admits a measure-valued
Laplacian on $U$ if there exists a signed Radon measure
$\Delta v$ satisfying
\[
 \int_U\phi\di\Delta v
 =-\int_U\langle\nabla v,\nabla\phi\rangle\di\meas
 \qquad\text{for every }\phi\in\Lip_c(U).
\]
Laplacian inequalities are understood by testing against
nonnegative functions in $\Lip_c(U)$.

By local doubling, the Lebesgue differentiation theorem holds:
for each $F\in L^1_{\loc}(X)$, $\meas$-almost every point is
a Lebesgue point of $F$;
see \cite[Theorem~14.15]{HajlaszKoskela00}.

Let $P_t=e^{t\Delta}$ denote the heat semigroup associated with
the Cheeger energy. It is symmetric, conservative, and
$L^\infty$-contractive \cite{AGS14,AGSRCD14}.
Its continuous kernel $p_t(p,q)$ satisfies the Gaussian bounds
of \cite[Theorems~1.1--1.2]{JLZ16}: for $0<t\le R^2$,
\begin{equation}
 \frac{c}{\meas(B_{\sqrt t}(p))}
       \exp\!\left(-\frac{C\dist(p,q)^2}{t}\right)
 \le p_t(p,q)\le
 \frac{C}{\meas(B_{\sqrt t}(p))}
       \exp\!\left(-\frac{\dist(p,q)^2}{Ct}\right).
 \label{eq:irc-gaussian}
\end{equation}
For any fixed $c_0>0$, these bounds also hold for
$0<t\le c_0R^2$, with constants depending additionally on $c_0$.
Moreover, for every $\phi\in C_c(X)$ and $p\in X$,
\[
 P_t\phi(p)\to\phi(p)
 \qquad\text{as }t\downarrow0,\ \text{or equivalently,}\qquad 
p_t(p,\cdot)\meas\rightharpoonup\delta_p.
\]
For the Green construction in Section~\ref{sec:gbl-averages}, equip
$Z:=X\times\setR^3$ with the Euclidean product metric and
$\nu:=\meas\otimes\Leb^3$. Tensorization
\cite[Theorem~3.23]{EKS15} gives $\RCD(-K_-,N+3)$.
The tensorization of the Cheeger energy and heat semigroup
\cite[Section~6]{AGSRCD14} gives the product kernel
\begin{equation}
 p_t^Z((x,0),(y,z))
 =p_t^X(x,y)(4\pi t)^{-3/2}\exp(-|z|^2/(4t)).
 \label{eq:irc-product-kernel}
\end{equation}
The corresponding scalar estimates hold on $Z$ with dimension bound $N+3$.
In particular, local Lipschitz regularity and the interior gradient
estimate for scalar harmonic functions follow from the scalar case of
Proposition~\ref{prop:irc-exact}, applied on interior product balls.

\begin{proposition}
\label{prop:irc-scalar}
There exist constants $c,C>0$, depending only on $N$ and
$K_-R^2$, such that
\begin{equation}
 \frac{\meas(B_s(x))}{\meas(B_r(x))}
 \ge c\left(\frac{s}{r}\right)^N
 \qquad\text{for every }x\in X,\quad 0<s\le r\le4R.
 \label{eq:irc-volume}
\end{equation}
Let $B_r=B_r(x)$ be an admissible ball and
$w\in W^{1,2}_0(B_r)$. Then
\begin{equation}
 \left(\fint_{B_r}|w|^2\di\meas\right)^{1/2}
 \le Cr
       \left(\fint_{B_r}|Dw|^2\di\meas\right)^{1/2}.
 \label{eq:irc-poincare}
\end{equation}
More generally, for $0<t<2N/(N-2)$ when $N>2$, and for
every finite $t>0$ when $1<N\le2$,
\begin{equation}
 \left(\fint_{B_r}|w|^t\di\meas\right)^{1/t}
 \le C_{N,t,K_-R^2}r
       \left(\fint_{B_r}|Dw|^2\di\meas\right)^{1/2}.
 \label{eq:irc-sobolev-moments}
\end{equation}
For $t<1$, the left-hand side is understood as a normalized
moment.
In particular, set
\begin{equation}
 q_*(N):=
 \begin{cases}
  2N/(N+2),&N>2,\\
  1,&1<N\le2.
 \end{cases}
 \label{eq:irc-qstar}
\end{equation}
If $q_*(N)<q<N$ and $q'=q/(q-1)$, then
\begin{equation}
 \left(\fint_{B_r}|w|^{q'}\di\meas\right)^{1/q'}
 \le C_{N,q,K_-R^2}r
       \left(\fint_{B_r}|Dw|^2\di\meas\right)^{1/2}.
 \label{eq:irc-sobolev}
\end{equation}
\end{proposition}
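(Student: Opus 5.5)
The statement to prove is Proposition~\ref{prop:irc-scalar}, which collects standard scalar facts on $\RCD(K,N)$ spaces: relative volume comparison, a Poincar\'e inequality for functions vanishing on the boundary, and Sobolev-type moment bounds. The plan is to derive each item from Bishop--Gromov and the local Sobolev inequality, keeping track of the fact that all constants depend only on $N$ and $K_-R^2$.

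\emph{Volume comparison \eqref{eq:irc-volume}.} By the Bishop--Gromov inequality for $\RCD(K,N)$ spaces, for $0<s\le r$,
\[
\frac{\meas(B_s(x))}{\meas(B_r(x))}\ge \frac{\int_0^s \sinh_{K}^{N-1}}{\int_0^r\sinh_{K}^{N-1}},
\]
where $\sinh_K$ is the model comparison function for curvature $-K_-$. When $r\le 4R$, the rescaled variable $\sqrt{K_-}\,r$ is bounded by $4\sqrt{K_-R^2}$. On that bounded range the ratio of the model integrals is comparable to $(s/r)^N$, with constants depending only on $N$ and $K_-R^2$. This gives \eqref{eq:irc-volume}.

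\emph{Sobolev inequality and its consequences.} From \eqref{eq:irc-volume} and the $(1,1)$-Poincar\'e inequality of Rajala for $\RCD$ spaces, the Haj{\l}asz--Koskela machinery \cite{HajlaszKoskela00} yields a local Sobolev inequality. Fix an exponent $2^*=2N/(N-2)$ if $N>2$, or any finite $t$ if $N\le2$. Then for $f\in W^{1,2}(B_{2r})$,
\[
\Bigl(\fint_{B_r}|f-f_{B_r}|^{2^*}\Bigr)^{1/2^*}\le C r\Bigl(\fint_{B_{2r}}|Df|^2\Bigr)^{1/2}.
\]
We apply this to $w\in W^{1,2}_0(B_r)$, extended by zero; by density we may take $w\in\Lip_c(B_r)$. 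Working on the doubled ball $B_{2r}$, the zero extension has vanishing gradient outside $B_r$. Its mean is controlled because $w$ vanishes on the annulus $B_{2r}\setminus B_r$.

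The main obstacle is showing that this annulus has a definite fraction of the mass of $B_{2r}$. The hypothesis $X\setminus B_{16R}(o)\neq\varnothing$ provides this. Connectedness together with a point at distance at least $16R$ from $o$ shows that, for any admissible ball, there is a point $p$ with $\dist(x,p)=3r/2$. Then $B_{r/2}(p)\subset B_{2r}\setminus B_r$. By \eqref{eq:irc-volume}, applied from $p$ together with $B_{2r}(x)\subset B_{4r}(p)$, we get $\meas(B_{r/2}(p))\ge c\,\meas(B_{2r}(x))$.

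Since $w=0$ on a set of proportion at least $c$, a standard argument gives $|w_{B_{2r}}|\le C\bigl(\fint_{B_{2r}}|w-w_{B_{2r}}|^{t}\bigr)^{1/t}$. Hence $\bigl(\fint_{B_{2r}}|w|^{2^*}\bigr)^{1/2^*}\le Cr\bigl(\fint_{B_{2r}}|Dw|^2\bigr)^{1/2}$.

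It remains to pass from averages over $B_{2r}$ to averages over $B_r$. Both sides are integrals over $B_r$, since $w$ and $Dw$ vanish outside $B_r$. The normalizing factors $\meas(B_{2r})$ and $\meas(B_r)$ are comparable by doubling, so the averages convert at the cost of a constant. This yields \eqref{eq:irc-sobolev-moments} for $t=2^*$ (or large $t$ when $N\le 2$).

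\emph{Remaining exponents.} Smaller exponents $t$, including $t<1$ understood as normalized moments, follow from H\"older's inequality on the probability measure $\meas|_{B_r}/\meas(B_r)$. The case $t=2$ gives \eqref{eq:irc-poincare}. Finally, \eqref{eq:irc-sobolev} is the special case $t=q'$: the condition $q>q_*(N)=2N/(N+2)$ is exactly $q'<2N/(N-2)$, and when $N\le2$ every finite $q'$ is allowed.
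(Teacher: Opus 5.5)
Your proposal is correct and follows essentially the same route as the paper. Both arguments use Bishop--Gromov for the volume bound, then Rajala's Poincar\'e inequality upgraded via Haj{\l}asz--Koskela to a Sobolev--Poincar\'e inequality with means, then the zero extension of $w$ together with a ball of definite mass in an annulus (obtained from $X\setminus B_{16R}(o)\neq\varnothing$) to control the mean, and finally H\"older/Jensen for smaller exponents and $t=q'$ for the last estimate. The differences are cosmetic: you use $B_{2r}\setminus B_r$ with $B_{r/2}(p)$, $\dist(x,p)=3r/2$, where the paper uses $B_{3r}\setminus B_r$ with $B_{r/4}(z)$ at distance $2r$, and you descend from the top exponent by H\"older rather than applying the inequality separately for each $t\ge1$.
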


\begin{proof}
Bishop--Gromov comparison
\cite[Theorem~2.3 and Corollary~2.4]{Sturm06II}
gives \eqref{eq:irc-volume} and local doubling.
Since $X$ also satisfies the curvature-dimension condition
$\mathrm{CD}(-K_-,N)$, where $-K_-\le0$, the weak local
Poincar\'e inequality with means \cite[Theorem~1.2]{Rajala12}
applies; by the standard theory of Poincar\'e inequalities on
doubling spaces \cite[Theorem~5.1]{HajlaszKoskela00}, this
inequality improves to the following local
Sobolev--Poincar\'e estimate with means: 
\[
 \left(\fint_{B_s(z)}
       \Bigl|v-\fint_{B_s(z)}v\di\meas\Bigr|^t\di\meas
 \right)^{1/t}
 \le C s
       \left(\fint_{B_{2s}(z)}|Dv|^2\di\meas\right)^{1/2},
\]
where $t$ ranges as in \eqref{eq:irc-sobolev-moments} and, after
rescaling to unit size, $C=C(N,t,K_-R^2)$; for $N>2$ the
endpoint $t=2N/(N-2)$ is recorded in
\cite[Appendix~B,~(B.3)]{NobiliViolo24}, and the general form of
the estimate is stated in \cite[Theorem~2.14]{NobiliViolo22}.

Let $w\in W^{1,2}_0(B_r)$ and extend it by zero to $X$. By
geodesicity and $X\setminus B_{16R}(o)\ne\varnothing$, there exists
$z\in X$ with $\dist(x,z)=2r$. Since
$B_{r/4}(z)\subset B_{3r}(x)\setminus B_r(x)$, volume comparison
gives
\[
 \meas\bigl(B_{3r}(x)\setminus B_r(x)\bigr)
 \ge c\,\meas(B_{3r}(x)).
\]
Consequently, writing
$w_{B_{3r}}:=\fint_{B_{3r}(x)}w\di\meas$, we have
\[
 \norm{w}_{L^p(B_{3r}(x))}
 \le (1+c^{-1/p})
       \norm{w-w_{B_{3r}}}_{L^p(B_{3r}(x))},
 \qquad p\ge1.
\]
For $t\ge1$, choosing $p=t$ and applying the Sobolev--Poincar\'e
estimate on $B_{3r}(x)$, using that $|Dw|=0$ almost everywhere
outside $B_r(x)$ together with relative volume comparison, gives
\eqref{eq:irc-sobolev-moments} in this range, and taking $t=2$ gives
\eqref{eq:irc-poincare}. For $0<t<1$, Jensen's inequality for the
probability measure $\meas(B_r)^{-1}\meas|_{B_r}$ gives
\[
 \left(\fint_{B_r}|w|^t\di\meas\right)^{1/t}
 \le\fint_{B_r}|w|\di\meas
 \le\left(\fint_{B_r}|w|^2\di\meas\right)^{1/2},
\]
and \eqref{eq:irc-poincare} bounds the last term by
$Cr\bigl(\fint_{B_r}|Dw|^2\di\meas\bigr)^{1/2}$; hence
\eqref{eq:irc-sobolev-moments} holds in this range as well. Finally,
$t=q'$ gives \eqref{eq:irc-sobolev}.
\end{proof}
For localization, we use Lipschitz cutoff functions
$\chi\in\Lip_c(B_r(x))$ satisfying $0\le\chi\le1$,
$\chi=1$ on $B_{r/2}(x)$, and $|D\chi|\le C/r$.
Such cutoffs are available on every ball.

Let $U\subset X$ be open and let
$v\in W^{1,2}_{\loc}(U)$ be a weak solution of
$\Delta v=F$, with $F\in L^\infty_{\loc}(U)$.
Testing the equation with $\chi^2v$ and applying Young's
inequality gives the Caccioppoli estimate
\begin{equation}
 \int_U\chi^2|Dv|^2\di\meas
 \le4\int_Uv^2|D\chi|^2\di\meas
       +2\int_U\chi^2|Fv|\di\meas,
 \qquad \chi\in\Lip_c(U).
 \label{eq:irc-caccioppoli}
\end{equation}
For unbounded $v$, the test is justified by truncation.

\subsection{Sobolev maps and harmonic replacements}
\label{subsec:energyharmonics}

The quantities $|\mathrm du|$ and $e_2[u]$ correspond to two norms of the
classical differential. For a smooth map between Riemannian
manifolds with $n$-dimensional source, they are
\[
 |\mathrm du|(x)=\sup_{|\xi|=1}|\mathrm du_x(\xi)|,
 \qquad
 e_2[u](x)=\left(\frac1{n+2}
              \sum_{i=1}^n|\mathrm du_x(\xi_i)|^2\right)^{1/2},
\]
where $(\xi_i)_{i=1}^n$ is an orthonormal basis of the source tangent
space. We now recall the intrinsic definitions of these quantities for metric-valued Sobolev maps.

Let $U\subset X$ be bounded and open. The space $L^2(U,Y)$ consists
of essentially separably valued Borel maps, up to equality
\(\meas\)-almost everywhere, such that
$\dist_Y(u,y_*)\in L^2(U)$ for one, and hence every, $y_*\in Y$; it
is endowed with the metric
\[
 \dist_{L^2}(u,v)^2:=\int_U\dist_Y(u,v)^2\di\meas.
\]
Sobolev maps are defined by scalar post-composition, without using
target coordinates. This approach goes back to Ambrosio
\cite{Ambrosio90} for metric-valued $BV$ maps and to Reshetnyak
\cite{Reshetnyak97} for Sobolev maps, and we use the formulation on
open subsets of metric measure spaces given in
\cite[Definition~5.3]{GigliTulyenev21}: a map $u\in L^2(U,Y)$ belongs
to $W^{1,2}(U,Y)$ if
$\phi\circ u\in W^{1,2}(U)$ for every $1$-Lipschitz function
$\phi:Y\to\setR$ and
\[
 |\mathrm du|:=\operatorname*{ess\,sup}_{\substack{\phi:Y\to\setR\\
                                      \Lip(\phi)\le1}}
              |D(\phi\circ u)|\in L^2(U).
\]
It is
called the minimal $2$-weak upper gradient of $u$. Distance functions and
the scalar lattice calculus then give
\[
 |D\dist_Y(u,y_*)|\le |\mathrm du|,
 \qquad
 |D\dist_Y(u,v)|\le |\mathrm du|+|\mathrm dv|
\]
for Sobolev maps $u,v$. 
Gigli--Pasqualetto--Soultanis \cite{GigliPasqualettoSoultanis2020}
construct a differential $\mathrm du$
for every $u\in W^{1,2}(U,Y)$. It satisfies the chain rule for
scalar post-compositions, and its pointwise operator norm is the
minimal $2$-weak upper gradient introduced above
\cite[Definition~3.4 and Proposition~3.5]{GigliPasqualettoSoultanis2020}.
Thus $|\mathrm du|$ recovers the maximal stretching of the classical
differential.

The space $X$ is strongly rectifiable of essential dimension $n$:
for every $\varepsilon>0$, there is a countable Borel partition
$(U_i^\varepsilon)_i$ of $X$ up to a $\meas$-null set, with
$(1+\varepsilon)$-bi-Lipschitz charts
$\varphi_i^\varepsilon:U_i^\varepsilon\to\setR^n$ satisfying
\[
 c_i^\varepsilon\Leb^n\big|_{\varphi_i^\varepsilon(U_i^\varepsilon)}
 \le(\varphi_i^\varepsilon)_\#
       (\meas|_{U_i^\varepsilon})
 \le(c_i^\varepsilon+\varepsilon)
       \Leb^n\big|_{\varphi_i^\varepsilon(U_i^\varepsilon)}
\]
for some $c_i^\varepsilon>0$; see
\cite[Section~2.4]{GigliTulyenev21}.
We fix an aligned family $(\mathcal A^{\varepsilon_j})_{j\ge1}$ of
such atlases, with $\varepsilon_j\downarrow0$, in the sense of that
reference. This family is used for all maps below.
We use the notion of approximate metric differentiability from \cite[Definition~3.3]{GigliTulyenev21}, with the fixed aligned atlases restricted to $U$. For a Borel map $u:U\to Y$, we denote its approximate metric differential at $x$ by $s_{u,x}$ whenever it exists. This is a seminorm on $\mathbb R^n$.

Following \cite[Definition~5.4]{GigliTulyenev21}, we recall the
Korevaar--Schoen energy on open sets.

\begin{definition}[Korevaar--Schoen space]
\label{def:ks-space}
For $u\in L^2(U,Y)$ and $r>0$, set
\[
 \kse_{2,r}[u,U](x):=
 \begin{cases}
 \displaystyle\left(\fint_{B_r(x)}
       \frac{\dist_Y(u(x),u(y))^2}{r^2}\di\meas(y)\right)^{1/2},
       &B_r(x)\subset U,\\[4pt]
 0,&\text{otherwise}.
 \end{cases}
\]
The Korevaar--Schoen energy and Sobolev space are
\begin{equation}
 \begin{aligned}
 \mathrm E_2^U(u)&:=
 \sup_{\substack{\eta\in C_c(U)\\0\le\eta\le1}}
 \limsup_{r\downarrow0}\int_U\eta\,\kse_{2,r}[u,U]^2\di\meas,\\
 \ks^{1,2}(U,Y)&:=
 \{u\in L^2(U,Y):\mathrm E_2^U(u)<\infty\}.
 \end{aligned}
 \label{eq:ks-definition}
\end{equation}
\end{definition}
Recall that a complete geodesic space $(Y,\dist_Y)$ is $\CAT(0)$
if, along every constant-speed geodesic $\gamma:[0,1]\to Y$,
\[
 \dist_Y(p,\gamma_t)^2
 \le(1-t)\dist_Y(p,\gamma_0)^2+t\dist_Y(p,\gamma_1)^2
       -t(1-t)\dist_Y(\gamma_0,\gamma_1)^2
\]
for all $p\in Y$ and $t\in[0,1]$. Geodesics are unique; we write
$[p,q]_t$ for their constant-speed parametrization. We shall use
distance convexity and the four-point inequality
\[
 \begin{aligned}
 \dist_Y([p_0,p_1]_t,[q_0,q_1]_t)
 &\le(1-t)\dist_Y(p_0,q_0)+t\dist_Y(p_1,q_1),\\
 \dist_Y([p_0,p_1]_t,[q_0,q_1]_t)^2
 &\le(1-t)\dist_Y(p_0,q_0)^2+t\dist_Y(p_1,q_1)^2\\
 &\quad-t(1-t)
       \bigl(\dist_Y(p_0,p_1)-\dist_Y(q_0,q_1)\bigr)^2,
 \end{aligned}
\]
as well as the well-defined, nonexpansive projection onto any
nonempty closed convex subset of $Y$; see
\cite[Definition~6.1 and Eq.~(6.2)]{GigliTulyenev21} and
\cite[Propositions~II.2.2 and~II.2.4]{BridsonHaefliger99}.

For $u\in W^{1,2}(U,Y)$, set
\begin{equation}
 e_2[u]:=\frac{|\mathrm du|_{\mathrm{HS}}}{\sqrt{n+2}},
 \qquad
 E(u;U):=\frac{n+2}{2}\mathrm E_2^U(u).
 \label{eq:irc-normalization}
\end{equation}
The following theorem collects the basic properties of these
objects and the comparison of metric differentials used below.

\begin{theorem}
\label{thm:irc-sobolev-energy}
\label{lem:irc-differential-input}
Let $U\subset X$ be bounded and open.
\begin{enumerate}[label=\textup{(\roman*)},ref=\roman*]
\item\label{item:irc-energy-density}
The spaces $\ks^{1,2}(U,Y)$ and $W^{1,2}(U,Y)$ coincide.
For every $u$ in this space, $e_2[u]\in L^2(U)$ and, as $r\downarrow0$,
\begin{equation}
 \kse_{2,r}[u,U]\to e_2[u]
 \quad\meas\text{-a.e. and in }L^2_{\loc}(U),
 \qquad
 \mathrm E_2^U(u)=\int_U e_2[u]^2\di\meas.
 \label{eq:ks-density-limit}
\end{equation}
In particular, $E(u;U)=\frac{n+2}{2}\int_U e_2[u]^2\di\meas$.
The functional $\mathrm E_2^U:L^2(U,Y)\to[0,\infty]$ is lower
semicontinuous. If $V\subset U$ is open, then
$u|_V\in\ks^{1,2}(V,Y)$ and $e_2[u|_V]=e_2[u]$ almost everywhere on $V$.

\item\label{item:irc-metric-existence}
Every $u\in\ks^{1,2}(U,Y)$ is approximately metrically
differentiable almost everywhere in $U$, and
\[
 e_2[u](x)
 =\left(\fint_{B_1(0)\subset\setR^n}
          s_{u,x}(\xi)^2\di\Leb^n(\xi)\right)^{1/2}
 \qquad\text{for }\meas\text{-almost every }x\in U.
\]

\item\label{item:irc-metric-representation}
For every $u\in\ks^{1,2}(U,Y)$, there is a measurable field of
positive semidefinite symmetric $n\times n$ matrices $A_{u,x}$ such that, for
$\meas$-almost every $x\in U$ and every $\xi\in\setR^n$,
\begin{equation}
 s_{u,x}(\xi)^2=\langle A_{u,x}\xi,\xi\rangle.
 \label{eq:irc-seminorm}
\end{equation}
Let $\sigma_n$ be probability surface measure on $S^{n-1}$. Then, almost everywhere,
\begin{equation}
 \begin{aligned}
 e_2[u](x)
 &=\left(\frac{\operatorname{tr}A_{u,x}}{n+2}\right)^{1/2}
   =\left(\frac n{n+2}\int_{S^{n-1}}
             s_{u,x}(\xi)^2\di\sigma_n(\xi)\right)^{1/2},\\
 |\mathrm du|(x)&=\sqrt{\lambda_{\max}(A_{u,x})}
        =\max_{\xi\in S^{n-1}}s_{u,x}(\xi).
 \end{aligned}
 \label{eq:irc-spherical-trace}
\end{equation}
In particular, $|\mathrm du|^2\le(n+2)e_2[u]^2\le n |\mathrm du|^2$.
For scalar maps $f\in W^{1,2}(U)$,
$e_2[f]=(n+2)^{-1/2}|Df|$.

\item\label{item:irc-metric-comparison}
Let $u,v,w\in\ks^{1,2}(U,Y)$ and $\alpha,\beta\ge0$.
Suppose that
\begin{equation}
 \dist_Y(w(p),w(q))
 \le\alpha\dist_Y(u(p),u(q))
     +\beta\dist_Y(v(p),v(q))
 \label{eq:irc-metric-distance-comparison}
\end{equation}
holds for $(\meas\otimes\meas)$-almost every $(p,q)\in U\times U$.
Then, for $\meas$-almost every $x\in U$ and every $\xi\in\setR^n$,
\begin{equation}
 s_{w,x}(\xi)\le\alpha s_{u,x}(\xi)+\beta s_{v,x}(\xi).
 \label{eq:irc-metric-direction-comparison}
\end{equation}
\end{enumerate}
\end{theorem}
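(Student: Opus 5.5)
This theorem collects known results, so the plan is to assemble each item from Gigli--Tyulenev \cite{GigliTulyenev21} and Gigli--Pasqualetto--Soultanis \cite{GigliPasqualettoSoultanis2020}, keeping track of the normalization \eqref{eq:irc-normalization}. The only genuine computations are the spherical averages in (iii).

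\textbf{Items (i) and (ii).} For (i), I will invoke the identification of $\ks^{1,2}(U,Y)$ with $W^{1,2}(U,Y)$ on open subsets of strongly rectifiable (in particular $\RCD(K,N)$) spaces, \cite[Theorem~5.7, Proposition~6.7]{GigliTulyenev21}. The same references give the pointwise and $L^2_\loc$ convergence of $\kse_{2,r}$ to the energy density, lower semicontinuity of $\mathrm E_2^U$, and locality under restriction to $V\subset U$. Locality holds because $\kse_{2,r}[u,V]=\kse_{2,r}[u,U]$ whenever $B_r(x)\subset V$, and for every $x\in V$ this is true once $r$ is small. For (ii), approximate metric differentiability a.e.\ is \cite[Section~3]{GigliTulyenev21}. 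The density formula there reads $\fint_{B_1(0)}s_{u,x}^2\di\Leb^n$, and I will check it against our $e_2$ through the chart expansion of $\kse_{2,r}$. Blowing up in a $(1+\eps_j)$-bi-Lipschitz chart with almost constant density turns $\fint_{B_r(x)}\dist_Y(u(x),u(y))^2/r^2\di\meas(y)$ into $\fint_{B_1}s_{u,x}(\xi)^2\di\xi$ up to errors vanishing as $j\to\infty$.

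\textbf{Item (iii).} For the quadratic representation \eqref{eq:irc-seminorm}, I will use that $Y$ is $\CAT(0)$, so the Gigli--Tyulenev theory shows $s_{u,x}$ is a.e.\ induced by a scalar product. This is the parallelogram law obtained from the four-point inequality and the universal infinitesimal Hilbertianity of $\CAT(0)$ targets. Measurability of $x\mapsto A_{u,x}$ follows by polarization, using measurability of $x\mapsto s_{u,x}(\xi)$ for $\xi$ in a countable dense set. The trace identities then come from two elementary averages:
\[
\fint_{B_1}\langle A\xi,\xi\rangle\di\xi=\frac{\operatorname{tr}A}{n+2},
\qquad
\int_{S^{n-1}}\langle A\xi,\xi\rangle\di\sigma_n=\frac{\operatorname{tr}A}{n}.
\]
The first follows from polar coordinates, since $\fint_{B_1}|\xi|^2=n/(n+2)$. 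For $|\mathrm du|=\sqrt{\lambda_{\max}}$, I will invoke \cite[Proposition~3.5]{GigliPasqualettoSoultanis2020}, which says $|\mathrm du|$ is the operator norm of $\mathrm du$. I then identify that operator norm with $\max_{|\xi|=1}s_{u,x}(\xi)$ by comparing both with $|D(\phi\circ u)|$ for $1$-Lipschitz $\phi$ in the fixed atlas.

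This identification is the main obstacle. It means matching two a priori different differential structures (the GPS differential and the Gigli--Tyulenev metric differential). The plan is to show both equal the $\meas$-a.e.\ limit of the maximal directional blow-up slope. The inequality $\max s_{u,x}\ge|D(\phi\circ u)|$ comes from taking a countable dense family of $\phi$. The reverse inequality comes from choosing $\phi=\dist_Y(\cdot,u(y_k))$ with $y_k$ dense. The eigenvalue bounds $\lambda_{\max}\le\operatorname{tr}A\le n\lambda_{\max}$ then give $|\mathrm du|^2\le(n+2)e_2^2\le n|\mathrm du|^2$. For scalar $f$, $A=\nabla f\otimes\nabla f$ has rank one, so $\operatorname{tr}A=|Df|^2$.

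\textbf{Item (iv).} I will argue directly from the definition of the approximate metric differential as an approximate limit of $\dist_Y(u(\varphi^{-1}(\varphi(x)+t\xi)),u(x))/t$. Take $x$ a common point of approximate differentiability for $u,v,w$. Because the limit is approximate, the inequality \eqref{eq:irc-metric-distance-comparison} only needs to hold for $q=x$ and $p$ in a density-one set. Fubini supplies this for a.e.\ $x$. Passing to the limit gives \eqref{eq:irc-metric-direction-comparison} for a countable dense set of $\xi$, and continuity of seminorms extends it to all $\xi\in\setR^n$.
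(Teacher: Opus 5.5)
Your plan is correct. For (i), (ii) and the trace computations in (iii) it matches the paper, which likewise just cites \cite[Theorem~5.7 and Proposition~6.7]{GigliTulyenev21}. It departs from the paper in two places.

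\emph{Item (iii).} The paper obtains the quadratic form and $|\mathrm du|=\sqrt{\lambda_{\max}(A_{u,x})}$ together. It uses the Gigli--Tyulenev identification of the approximate metric differential with $\mathrm du$ and the Hilbertianity of its target module, sets $(A_{u,x})_{ij}=\langle\mathrm du(E_i),\mathrm du(E_j)\rangle$, and reads $\lambda_{\max}$ off the pointwise operator norm of \cite{GigliPasqualettoSoultanis2020}. You instead polarize the Hilbertian seminorm and re-derive $\max_{|\xi|=1}s_{u,x}(\xi)=|\mathrm du|(x)$ through scalar post-compositions. That works, but it is not cheaper than citing, and you should state both inputs explicitly:
\begin{itemize}
\item The bound $|D(\phi\circ u)|\le\max s_{u,x}$ uses the scalar case of the same identification: on an $\RCD$ space, $|Df|$ is the Euclidean norm of the chart differential in the limit along the aligned atlases.
\item The reverse bound uses the Kirchheim-type formula $s_{u,x}(\xi)=\sup_k|\partial_\xi(\phi_k\circ u)(x)|$ with $\phi_k=\dist_Y(\cdot,y_k)$. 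On Borel chart pieces this requires extending $u\circ\varphi^{-1}$ to an $\ell^\infty$-valued Lipschitz map.
\end{itemize}
Also, no countable family is needed for the first bound: an a.e.\ bound valid for each $\phi$ already bounds the essential supremum.

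\emph{Item (iv).} The paper refines the charts into Borel pieces where $u,v,w$ are Lipschitz. It then uses Fubini to pick sequences in a piece approaching $x$ in a prescribed direction along which the inequality holds, applies metric differentiation, and passes to the limit along the aligned atlases via \cite[Proposition~3.6]{GigliTulyenev21}. Your approximate-limit argument avoids the Lipschitz pieces, but only if you use the $n$-dimensional approximate limit (exceptional sets of density zero at $x$). You then need the observation that a density-one set at $\varphi(x)$ meets every small cone around $\xi$.

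The version you wrote, a limit along $t\mapsto\varphi^{-1}(\varphi(x)+t\xi)$, is not usable as stated. First, $\varphi(U)$ is merely Borel, so that curve need not exist. Second, a $\meas\otimes\meas$-null set may contain every such line; for example, $\{(p,q):\varphi(p)-\varphi(q)\in\setR\xi\}$ when $n\ge2$. So Fubini says nothing on a fixed line. Finally, you omit the passage along the aligned atlases, which the paper makes explicit and which you need because $s_{u,x}$ is defined through that fixed aligned family.
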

Statements \textup{(i)} and \textup{(ii)} follow from
\cite[Theorem~5.7 and Proposition~6.7]{GigliTulyenev21}.
For \textup{(iii)}, the identification of the approximate metric
differential with $\mathrm du$, together with the Hilbertianity
of its target module
\cite[Theorems~4.9, 4.12, and~6.5]{GigliTulyenev21},
gives the quadratic representation: in the orthonormal source
frame $(E_i)$ associated with the fixed atlases, take
$(A_{u,x})_{ij}
=\langle\mathrm du(E_i),\mathrm du(E_j)\rangle(x)$.
The identities follow from the ball and sphere averages
of quadratic forms and the pointwise operator norm.
For \textup{(iv)}, refine the charts into common Borel pieces
on which \(u,v,w\) are Lipschitz; see
\cite[Propositions~2.17 and~5.6]{GigliTulyenev21}.
At a.e. \(x\) in a piece, Fubini allows us to choose, in any
prescribed coordinate direction, sequences in that piece along
which \eqref{eq:irc-metric-distance-comparison} holds. Metric
differentiation then passes this inequality to the chart metric
differentials; their convergence along the fixed aligned atlases
\cite[Proposition~3.6]{GigliTulyenev21} then yields
\eqref{eq:irc-metric-direction-comparison}.

For $u\in\ks^{1,2}(U,Y)$, the prescribed-boundary-value class is
\begin{equation}
 \ks^{1,2}_u(U,Y)
 :=\{v\in\ks^{1,2}(U,Y):\dist_Y(u,v)\in W^{1,2}_0(U)\};
 \label{eq:dirichlet-class}
\end{equation}
see \cite[Definition~5.8]{GigliTulyenev21}.
A minimizer of $E(\cdot;U)$ in $\ks^{1,2}_u(U,Y)$ is called a
harmonic replacement of $u$ in $U$. A map $h\in\ks^{1,2}(U,Y)$ is
harmonic in $U$ if $h|_V$ minimizes $E(\cdot;V)$ over
$\ks^{1,2}_{h|_V}(V,Y)$ for every open $V\Subset U$.
Locality and Sobolev gluing show that every harmonic replacement
is harmonic in $U$.

\begin{lemma}
\label{lem:irc-convexity}
Let $B_r:=B_r(x)$ be admissible, and let
$u,v\in\ks^{1,2}(B_r,Y)$ satisfy
$w:=\dist_Y(u,v)\in W^{1,2}_0(B_r)$. For $0\le t\le1$, set
$u_t(z):=[u(z),v(z)]_t$. Then $u_t\in\ks^{1,2}_u(B_r,Y)$ and
\begin{equation}
 E(u_t;B_r)\le(1-t)E(u;B_r)+tE(v;B_r)
              -\frac{t(1-t)}2\int_{B_r}|Dw|^2\di\meas.
 \label{eq:irc-strong-convexity}
\end{equation}
\end{lemma}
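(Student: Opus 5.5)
We prove the lemma in two steps: first that $u_t$ lies in the boundary-value class $\ks^{1,2}_u(B_r,Y)$, then the energy inequality through the pointwise metric differentials of Theorem~\ref{thm:irc-sobolev-energy}.

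\textbf{Step 1: membership.} The plan rests on three pointwise $\CAT(0)$ estimates. For a.e.\ $p,q\in B_r$, distance convexity applied to the geodesics $[u(p),v(p)]$ and $[u(q),v(q)]$ gives
\[
 \dist_Y(u_t(p),u_t(q))\le(1-t)\dist_Y(u(p),u(q))+t\dist_Y(v(p),v(q)).
\]
The four-point inequality gives
\[
 \dist_Y(u_t(p),u_t(q))^2\le(1-t)\dist_Y(u(p),u(q))^2+t\dist_Y(v(p),v(q))^2-t(1-t)\bigl(w(p)-w(q)\bigr)^2 .
\]
Finally, $\dist_Y(u,u_t)=t\,w$ pointwise.

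From the first estimate, $u_t\in L^2(B_r,Y)$, and $\kse_{2,r'}[u_t]\le(1-t)\kse_{2,r'}[u]+t\,\kse_{2,r'}[v]$. Hence $\mathrm E_2^{B_r}(u_t)<\infty$, so $u_t\in\ks^{1,2}(B_r,Y)$. Since $tw\in W^{1,2}_0(B_r)$, we conclude $u_t\in\ks^{1,2}_u(B_r,Y)$; alternatively one can invoke \eqref{eq:irc-wzero-domination}.

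\textbf{Step 2: energy inequality.} Divide the four-point inequality by $r'^2$ and average over $q\in B_{r'}(p)$. This gives
\[
 \kse_{2,r'}[u_t]^2(p)\le(1-t)\kse_{2,r'}[u]^2(p)+t\,\kse_{2,r'}[v]^2(p)-t(1-t)\,\kse_{2,r'}[w]^2(p),
\]
with $w$ viewed as a real-valued map. Multiply by $\eta\in C_c(B_r)$ with $0\le\eta\le1$, integrate, and let $r'\downarrow0$. By \eqref{eq:ks-density-limit} each term converges in $L^1_{\loc}$, so
\[
 \int\eta\, e_2[u_t]^2\le(1-t)\int\eta\, e_2[u]^2+t\int\eta\, e_2[v]^2-t(1-t)\int\eta\, e_2[w]^2 .
\]
Taking the supremum over $\eta$ (monotone convergence) and using $e_2[w]^2=|Dw|^2/(n+2)$ from Theorem~\ref{thm:irc-sobolev-energy}(iii), multiplication by $(n+2)/2$ yields exactly \eqref{eq:irc-strong-convexity}.

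The only delicate point is that $w$ need only lie in $W^{1,2}(B_r)$ restricted to $B_r$. It is real-valued with $|Dw|\le|\mathrm du|+|\mathrm dv|$, so it belongs to $\ks^{1,2}(B_r,\setR)$ by part (i). The $L^2_{\loc}$ convergence in \eqref{eq:ks-density-limit} therefore applies to $u$, $v$, $u_t$ and $w$ simultaneously, and this is what legitimizes passing to the limit termwise rather than through a $\limsup$ inequality.

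If one prefers a pointwise route, apply Theorem~\ref{thm:irc-sobolev-energy}(iv) to the two-point inequality, or metric-differentiate the four-point inequality directly, to get $s_{u_t,x}(\xi)^2\le(1-t)s_{u,x}(\xi)^2+t\,s_{v,x}(\xi)^2-t(1-t)|\langle\nabla w,\xi\rangle|^2$. Averaging over the unit ball via part (ii) then gives the same density inequality.
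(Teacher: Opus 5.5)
Your proposal is correct and follows essentially the same route as the paper. You obtain membership from distance convexity, the four-point inequality and $\dist_Y(u,u_t)=tw$, and the energy inequality by averaging the four-point inequality over balls and passing to the limit with \eqref{eq:ks-density-limit} together with $e_2[w]=(n+2)^{-1/2}|Dw|$. The only difference is cosmetic: the paper uses the $\meas$-a.e.\ convergence to get the density inequality pointwise before integrating, while you first integrate against cutoffs $\eta$ and use $L^2_{\loc}$ convergence, which is equally valid.
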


\begin{proof}
The four-point inequality gives
\[
 \begin{aligned}
 \dist_Y(u_t(z),u_t(z'))^2
 &\le(1-t)\dist_Y(u(z),u(z'))^2+t\dist_Y(v(z),v(z'))^2\\
 &\quad-t(1-t)|w(z)-w(z')|^2.
 \end{aligned}
\]
Distance convexity gives $u_t\in L^2(B_r,Y)$, and the preceding
four-point inequality bounds its Korevaar--Schoen energy. Hence
$u_t\in\ks^{1,2}(B_r,Y)$; moreover,
$\dist_Y(u,u_t)=tw$ gives the boundary condition.
Averaging over balls and using \eqref{eq:ks-density-limit} yields
\[
 e_2[u_t]^2\le(1-t)e_2[u]^2+t e_2[v]^2
              -\frac{t(1-t)}{n+2}|Dw|^2
 \qquad\meas\text{-a.e. on }B_r,
\]
where $e_2[w]=(n+2)^{-1/2}|Dw|$. Integration and
\eqref{eq:irc-normalization} prove
\eqref{eq:irc-strong-convexity}; compare
\cite[Lemma~6.2]{GigliTulyenev21}.
\end{proof}
For a locally Lipschitz map $v:U\to Y$, we write
\[
 \lip v(x):=\limsup_{\substack{z\to x\\z\ne x}}
             \frac{\dist_Y(v(z),v(x))}{\dist(z,x)},
\]
with value zero at isolated points.
The following proposition records the existence, uniqueness and
regularity properties of harmonic replacements established in
\cite{GigliTulyenev21,GigliLipschitz}.

\begin{proposition}
\label{prop:irc-exact}
Let $B_r:=B_r(x)$ be admissible and let $u\in\ks^{1,2}(B_r,Y)$.
Then $\ks^{1,2}_u(B_r,Y)$ has a unique energy minimizer $h=h_{B_r}$.
It admits a locally Lipschitz representative in $B_r$, and
\begin{equation}
 |\mathrm dh|=\lip h\quad\meas\text{-a.e.},
 \qquad
 |\mathrm dh|^2\in W^{1,2}_{\loc}(B_r)\cap L^\infty_{\loc}(B_r).
 \label{eq:irc-harmonic-regularity}
\end{equation}
In the measure-valued sense,
\begin{equation}
 \frac12\Delta(|\mathrm dh|^2)\ge K |\mathrm dh|^2\meas
                       \ge-K_-|\mathrm dh|^2\meas.
 \label{eq:irc-bochner}
\end{equation}
Moreover,
\begin{equation}
 \operatorname*{ess\,sup}_{B_{r/2}(x)}|\mathrm dh|
 \le\frac{C}{r}\inf_{y\in Y}
       \left(\fint_{B_r(x)}\dist_Y(h,y)^2\di\meas\right)^{1/2},
 \label{eq:irc-exact-gradient}
\end{equation}
where $C=C(N,K_-R^2)$.
\end{proposition}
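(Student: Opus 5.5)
The plan is to assemble the statement from three ingredients: the direct method in the convex class $\ks^{1,2}_u(B_r,Y)$, the regularity theory of \cite{GigliLipschitz,MondinoSemola2026} for harmonic maps into $\CAT(0)$ targets, and a scalar De~Giorgi--Moser argument on $B_r$ that turns the Bochner inequality into the interior gradient bound.

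\emph{Existence and uniqueness.} Set $m:=\inf\{E(v;B_r):v\in\ks^{1,2}_u(B_r,Y)\}$, which is finite because $u$ itself is admissible, and take a minimizing sequence $(v_k)$. For $w_{kl}:=\dist_Y(v_k,v_l)$ I first check that $w_{kl}\in W^{1,2}_0(B_r)$: it is dominated by $\dist_Y(v_k,u)+\dist_Y(u,v_l)\in W^{1,2}_0(B_r)$ and satisfies $|Dw_{kl}|\le|\mathrm dv_k|+|\mathrm dv_l|$, so \eqref{eq:irc-wzero-domination} applies. Lemma~\ref{lem:irc-convexity} with $t=1/2$ then gives
\[
 \tfrac18\int_{B_r}|Dw_{kl}|^2\di\meas\le\tfrac12E(v_k;B_r)+\tfrac12E(v_l;B_r)-m\longrightarrow0 .
\]
By the Poincar\'e inequality \eqref{eq:irc-poincare}, $(v_k)$ is Cauchy in $L^2(B_r,Y)$. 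Its limit $h$ stays in the class, since $\dist_Y(u,v_k)\to\dist_Y(u,h)$ in $L^2$ with bounded $W^{1,2}$ norms and $W^{1,2}_0(B_r)$ is weakly closed. Lower semicontinuity of $\mathrm E_2^{B_r}$ from Theorem~\ref{thm:irc-sobolev-energy}(i) shows that $h$ is a minimizer. For uniqueness, if $h,h'$ are two minimizers, the same convexity inequality gives $\int|D\dist_Y(h,h')|^2=0$. Since $\dist_Y(h,h')\in W^{1,2}_0(B_r)$, Poincar\'e forces $h=h'$ a.e.

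\emph{Regularity and Bochner.} Every minimizer is harmonic in $B_r$ by locality, so the interior theory of \cite{GigliLipschitz} (equivalently \cite{MondinoSemola2026}) applies on each $B_s(x')\Subset B_r$. It gives:
\begin{itemize}
\item a locally Lipschitz representative with $\lip h=|\mathrm dh|$ a.e.;
\item $|\mathrm dh|^2\in W^{1,2}_{\loc}\cap L^\infty_{\loc}$;
\item the Zhang--Zhong--Zhu type inequality $\frac12\Delta(\lip h)^2\ge K(\lip h)^2\meas$.
\end{itemize}
The second inequality in \eqref{eq:irc-bochner} is trivial, since $K\ge-K_-$. My role here is only to check that the cited hypotheses (finite-dimensional $\RCD(K,N)$ source, complete $\CAT(0)$ target, local energy minimality) match ours, and to patch the local statements together with a covering of $B_r$.

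\emph{Gradient estimate; the main obstacle.} Fix $y\in Y$ and let $f:=\dist_Y(h,y)^2$. The $\CAT(0)$ comparison gives the weak inequality $\Delta f\ge 2|\mathrm dh|^2\meas$. I would obtain this by comparing $h$ with the competitor $[h,y]_{t\phi}$ for $\phi\in\Lip_c$, $\phi\ge0$, and using the first inequality in the four-point/$\CAT(0)$ estimates; it is also proved in \cite{GigliTulyenev21,GigliLipschitz}. Testing with a cutoff as in \eqref{eq:irc-caccioppoli} yields
\[
 \int_{B_{3r/4}}|\mathrm dh|^2\di\meas\le \frac{C}{r^2}\int_{B_r}f\di\meas .
\]
Next, $g:=|\mathrm dh|^2$ satisfies $\Delta g\ge-2K_-g$ with $K_-r^2\le K_-R^2$. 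Because the doubling constant and the Poincar\'e inequality of Proposition~\ref{prop:irc-scalar} are available, Moser iteration for nonnegative subsolutions gives
\[
 \sup_{B_{r/2}}g\le C\fint_{B_{3r/4}}g\di\meas ,
\]
with $C=C(N,K_-R^2)$. The delicate point is keeping the constants uniform in $K_-r^2$. Absorbing the zero-order term by the product trick of the introduction is also acceptable: $b(z)g$ is subharmonic on $X\times\setR$, where $b(z)=\cosh(\sqrt{2K_-}z)$. Combining the two displays and taking the infimum over $y$ gives \eqref{eq:irc-exact-gradient}. I expect this step to be the main one to write carefully; the first two parts reduce to citations.
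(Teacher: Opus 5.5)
Your proposal is correct, but it takes a different route from the paper. The paper's proof of Proposition~\ref{prop:irc-exact} is purely by citation. Existence and uniqueness come from \cite[Theorem~6.4]{GigliTulyenev21}. Local Lipschitz regularity, the $W^{1,2}_{\loc}$-regularity, the Bochner inequality \eqref{eq:irc-bochner} and the quantitative bound \eqref{eq:irc-exact-gradient} all come from \cite[Theorem~6.18]{GigliLipschitz}, and \cite[Proposition~6.11]{GigliLipschitz} gives $\lip h=|\mathrm dh|$.

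You do three things differently. First, you rerun the direct method with the paper's own tools: Lemma~\ref{lem:irc-convexity}, the domination property \eqref{eq:irc-wzero-domination}, the Poincar\'e inequality \eqref{eq:irc-poincare} and lower semicontinuity. This is essentially the Gigli--Tyulenev argument made explicit. Second, you cite only the qualitative regularity and \eqref{eq:irc-bochner}. Third, you derive \eqref{eq:irc-exact-gradient} from \eqref{eq:irc-bochner}: a Caccioppoli inequality, followed by a Moser mean-value inequality for the nonnegative subsolution $|\mathrm dh|^2$, built on the doubling and Sobolev--Poincar\'e estimates of Proposition~\ref{prop:irc-scalar}.

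What your route buys is transparency. It shows that the constant in \eqref{eq:irc-exact-gradient} depends only on the doubling and Poincar\'e constants at scales $\le R$ and on $K_-r^2\le K_-R^2$, so it is uniform over admissible balls. It also shows that the only nonsmooth harmonic-map inputs are \eqref{eq:irc-bochner} and the subharmonicity estimate $\Delta\dist_Y^2(h,y)\ge2|\mathrm dh|^2\meas$.

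The price is that this second estimate is an extra input. If you prove it yourself with the competitors $[h,y]_{t\phi}$, the parameter $t\phi(p)$ varies from point to point. Distance convexity only compares points with equal parameters, so it is not enough. You need the full $\CAT(0)$ comparison for the triangle with vertices $h(p),h(q),y$. That comparison produces the cross term $(\phi(p)-\phi(q))\bigl(\dist_Y^2(h(p),y)-\dist_Y^2(h(q),y)\bigr)$. After polarizing the scalar density limit of Theorem~\ref{thm:irc-sobolev-energy}, this becomes $\langle\nabla\phi,\nabla\dist_Y^2(h,y)\rangle$.

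Two small steps should also be recorded. The midpoint $[v_k,v_l]_{1/2}$ lies in $\ks^{1,2}_u(B_r,Y)$, because $\dist_Y(u,[v_k,v_l]_{1/2})\le\dist_Y(u,v_k)+\frac12w_{kl}$ and \eqref{eq:irc-wzero-domination} applies. The $L^1$ right-hand side in the Moser step follows from the $L^2$ version by the standard interpolation argument, which is legitimate because $|\mathrm dh|^2\in L^\infty_{\loc}(B_r)$.
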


Existence and uniqueness follow from
\cite[Theorem~6.4]{GigliTulyenev21}; the normalization
\eqref{eq:irc-normalization} does not change the minimizer.
Local Lipschitz regularity, \eqref{eq:irc-bochner}, the
\(W^{1,2}_{\loc}\)-regularity in \eqref{eq:irc-harmonic-regularity},
and the gradient estimate \eqref{eq:irc-exact-gradient} follow
from \cite[Theorem~6.18]{GigliLipschitz}.
\cite[Proposition~6.11]{GigliLipschitz} identifies $\lip h$
with \(|\mathrm dh|\) almost everywhere.

The following lemma records a standard metric-valued consequence
of the local Sobolev-to-Lipschitz property. We include a short
proof to make the local-to-global and scalar-to-metric steps
explicit.

\begin{lemma}
\label{lem:irc-sobolev-lipschitz}
Let $u\in\ks^{1,2}(B_r(x),Y)$ and $M\ge0$.
If $|\mathrm du|\le M$ almost everywhere on $B_r(x)$, then $u$ has an
$M$-Lipschitz representative on $B_{r/2}(x)$.
\end{lemma}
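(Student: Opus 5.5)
The plan is to reduce to the scalar local Sobolev-to-Lipschitz property \cite[Proposition~2.10]{GigliViolo2023} through post-composition with distance functions, and then to recover the metric statement from a countable dense set of target points. Since $u$ is essentially separably valued, there is a Borel $\meas$-null set $\mathcal N_0\subset B_r(x)$ and a countable set $D=\{y_k\}_k\subset Y$ that is dense in the closure of $u(B_r(x)\setminus\mathcal N_0)$. For each $k$, the function $f_k:=\dist_Y(u,y_k)$ is the post-composition of $u$ with a $1$-Lipschitz function. It therefore belongs to $W^{1,2}(B_r(x))$ by the definition of $W^{1,2}(U,Y)$ together with Theorem~\ref{thm:irc-sobolev-energy}\eqref{item:irc-energy-density}, and it satisfies $|Df_k|\le|\mathrm du|\le M$ almost everywhere on $B_r(x)$.

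The local scalar Sobolev-to-Lipschitz property then gives, for every $k$, an $M$-Lipschitz function $\tilde f_k$ on $B_{r/2}(x)$ and a null set $\mathcal N_k$ with $\tilde f_k=f_k$ off $\mathcal N_k$. Here one needs that the local version yields an $M$-Lipschitz bound with respect to $\dist$ on the half ball itself, and not merely a local Lipschitz bound. This is where the geodesic structure enters: a geodesic between two points of $B_{r/2}(x)$ stays inside $B_r(x)$, and the cited proposition is stated in exactly this interior form. Set $\mathcal N:=\mathcal N_0\cup\bigcup_k\mathcal N_k$, which is still null. For $p,q\in B_{r/2}(x)\setminus\mathcal N$, density of $D$ gives
\[
 \dist_Y(u(p),u(q))=\sup_k|f_k(p)-f_k(q)|=\sup_k|\tilde f_k(p)-\tilde f_k(q)|\le M\dist(p,q).
\]
Indeed, choosing $y_k\to u(q)$ yields $|f_k(p)-f_k(q)|\to\dist_Y(u(p),u(q))$, and the reverse inequality is the triangle inequality.

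It remains to extend $u$ from the full-measure set $G:=B_{r/2}(x)\setminus\mathcal N$ to all of $B_{r/2}(x)$. Since $\meas$ has full support, $G$ is dense in $B_{r/2}(x)$. The restriction $u|_G$ is $M$-Lipschitz and $Y$ is complete, so $u|_G$ extends uniquely to an $M$-Lipschitz map $\widetilde u$ on $\overline G\supset B_{r/2}(x)$: Cauchy sequences in $G$ are mapped to Cauchy sequences in $Y$. By construction $\widetilde u=u$ almost everywhere, which gives the desired representative.

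I expect the main obstacle to be the second step, namely ensuring that the cited local scalar result gives the global constant $M$ on $B_{r/2}(x)$ rather than only a local Lipschitz bound. If the reference gives only local Lipschitz continuity with constant $M$, I would recover the global bound on $B_{r/2}(x)$ as follows. Any two points $p,q\in B_{r/2}(x)$ satisfy $\dist(p,q)<r$, and a geodesic joining them lies in $B_r(x)$ because each of its points is within $\max\{\dist(x,p),\dist(x,q)\}+\min$ of the two partial lengths $<r$ of $x$. Integrating the local Lipschitz bound along this geodesic gives $|\tilde f_k(p)-\tilde f_k(q)|\le M\dist(p,q)$, and the rest of the argument is unchanged.
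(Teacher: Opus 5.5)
Your proposal is correct and follows essentially the same route as the paper: reduction to the scalar functions $\dist_Y(u,y_k)$ over a countable dense set, the local scalar Sobolev-to-Lipschitz property of Gigli--Violo, chaining along a geodesic that stays compactly inside $B_r(x)$ to get the constant $M$ on all of $B_{r/2}(x)$, and density of a full-measure set plus completeness of $Y$ for the extension. One correction: the cited proposition only gives $|f(p)-f(q)|\le M\dist(p,q)$ for pairs with $\dist(p,q)\le\dist(p,\partial B_r(x))$, not directly on $B_{r/2}(x)$, so your fallback geodesic argument is required rather than optional, and it is exactly the step the paper carries out.
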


\begin{proof}
Choose a separable subset $Y_0\subset Y$ containing $u(z)$ for
almost every $z$, and a countable dense set $(y_i)$ in $Y_0$.
The functions $f_i:=\dist_Y(u,y_i)$ belong to $W^{1,2}(B_r(x))$
and satisfy $|Df_i|\le |\mathrm du|\le M$.
By the local scalar Sobolev-to-Lipschitz property
\cite[Proposition~2.10]{GigliViolo2023}, each \(f_i\) admits a
continuous representative, still denoted by \(f_i\), which is
locally \(M\)-Lipschitz in \(B_r(x)\) and satisfies
\[
 |f_i(p)-f_i(q)|\le M\dist(p,q)
\]
whenever
\(\dist(p,q)\le\dist(p,\partial B_r(x))\).

We first extend this estimate to all pairs in \(B_{r/2}(x)\).
Let \(p,q\in B_{r/2}(x)\), and let
\(\gamma:[0,L]\to X\) be a minimizing geodesic from \(p\) to
\(q\). The two triangle-inequality bounds
\[
 \dist(x,\gamma(t))\le\dist(x,p)+t,
 \qquad
 \dist(x,\gamma(t))\le\dist(x,q)+L-t
\]
give
\[
 \dist(x,\gamma(t))
 \le\frac{\dist(x,p)+\dist(x,q)+L}{2}<r.
\]
Thus \(\gamma\) is compactly contained in \(B_r(x)\). Partition
it sufficiently finely that
\[
 \dist(\gamma(t_j),\gamma(t_{j+1}))
 \le\dist(\gamma(t_j),\partial B_r(x))
\]
on every subarc. Applying the local estimate on each subarc and
summing the resulting inequalities yields
\[
 |f_i(p)-f_i(q)|
 \le M\sum_j\dist(\gamma(t_j),\gamma(t_{j+1}))
 =M\dist(p,q).
\]
Hence every \(f_i\) is \(M\)-Lipschitz on \(B_{r/2}(x)\).

On a common full-measure subset $F\subset B_{r/2}(x)$,
we have $u(F)\subset Y_0$ and $f_i=\dist_Y(u,y_i)$ for every $i$,
so
\[
 \dist_Y(u(p),u(q))=\sup_i|f_i(p)-f_i(q)|
 \le M\dist(p,q)\qquad(p,q\in F).
\]
Full support of $\meas$ makes $F$ dense in $B_{r/2}(x)$.
Completeness of $Y$ therefore gives the required Lipschitz extension.
\end{proof}

\section{Proof of Theorem~\ref{thm:replacement-potential}}
\label{sec:proof-replacement}
\subsection{Harmonic replacement comparison}
\label{sec:irc-comparisons}

The Bochner inequality \eqref{eq:irc-bochner} applies to the squared
operator norm $|\mathrm d h_B|^2$. To use this estimate for $u$, we first
control the difference between $|\mathrm du|$ and $|\mathrm d h_B|$ in terms of the
replacement energy gap. The same argument also gives a comparison
between the energy densities.

\begin{proposition}
\label{prop:irc-operator-gap}
Let $B$ be admissible, let $u\in\ks^{1,2}(B,Y)$, and set $h=h_B$.
Then
\begin{align}
 \int_B\bigl|\,|\mathrm du|-|\mathrm dh|\,\bigr|^2\di\meas
    &\le2(n+2)G_u(B),\label{eq:irc-operator-gap}\\
 \int_B|e_2[u]-e_2[h]|^2\di\meas
    &\le\frac4{n+2}G_u(B).\label{eq:irc-trace-gap}
\end{align}
\end{proposition}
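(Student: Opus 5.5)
The plan is to combine midpoint convexity with the minimality of $h$, and then to compare the quadratic forms $A_{u,x}$ and $A_{h,x}$ through their square-root seminorms.

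\textbf{Step 1 (midpoint energy bound).} Let $m(z):=[u(z),h(z)]_{1/2}$. By Lemma~\ref{lem:irc-convexity}, applicable because $\dist_Y(u,h)\in W^{1,2}_0(B)$, we have $m\in\ks^{1,2}_u(B,Y)$. Minimality of $h$ then gives $E(h;B)\le E(m;B)$. I would not use \eqref{eq:irc-strong-convexity} directly, since it only involves $|Dw|$. Instead I would redo the pointwise computation at the level of metric differentials.

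The four-point inequality with $t=1/2$, taken with $p_0=u(p)$, $p_1=h(p)$, $q_0=u(q)$, $q_1=h(q)$, is the wrong pairing for this purpose. The convenient form is the $\CAT(0)$ quadrilateral (parallelogram) inequality for midpoints:
\[
 \dist_Y(m(p),m(q))^2\le\tfrac12\dist_Y(u(p),u(q))^2+\tfrac12\dist_Y(h(p),h(q))^2-\tfrac14\bigl(\dist_Y(u(p),u(q))-\dist_Y(h(p),h(q))\bigr)^2 .
\]
This is the second displayed four-point inequality after relabelling, with geodesics $[u(p),h(p)]$ and $[u(q),h(q)]$ evaluated at $t=1/2$; its subtracted term is built from $\dist_Y(p_0,p_1)=\dist_Y(u(p),h(p))$, so I would need to check that the roles can be swapped. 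If the swap fails, I would fall back on the classical $\CAT(0)$ estimate from Korevaar--Schoen, namely
\[
 \dist_Y(m(p),m(q))^2\le\tfrac12\dist_Y(u(p),u(q))^2+\tfrac12\dist_Y(h(p),h(q))^2-\tfrac14\bigl(\dist_Y(u(p),u(q))-\dist_Y(h(p),h(q))\bigr)^2 ,
\]
which follows by comparing $m(p),m(q)$ with the chain through $[u(p),h(q)]_{1/2}$ and using midpoint convexity.

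\textbf{Step 2 (passage to differentials).} Dividing by $\dist(p,q)^2$ and passing to approximate metric differentials along the fixed atlases, as in Theorem~\ref{thm:irc-sobolev-energy}\eqref{item:irc-metric-comparison} (same Fubini and metric-differentiation argument), gives for a.e.\ $x$ and all $\xi$:
\[
 s_{m,x}(\xi)^2\le\tfrac12 s_{u,x}(\xi)^2+\tfrac12 s_{h,x}(\xi)^2-\tfrac14|s_{u,x}(\xi)-s_{h,x}(\xi)|^2 .
\]
Integrate this over $S^{n-1}$ and use \eqref{eq:irc-spherical-trace}. Then integrate over $B$, use $E(h)\le E(m)$, and recall $E=\frac n2\int_B\int_{S^{n-1}}s^2\,\di\sigma_n\,\di\meas$. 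This yields
\[
 \frac n8\int_B J\,\di\meas\le \tfrac12\bigl(E(u)-E(h)\bigr),
\]
that is, $\int_B J\le\frac4n G_u(B)$. This is consistent with the strategy section, where $\frac{n(n+2)}2\int J\le 2(n+2)G$.

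\textbf{Step 3 (comparison of norms).} Both estimates now follow from the pointwise bounds
\[
 \bigl(e_2[u]-e_2[h]\bigr)^2\le\frac n{n+2}J,
 \qquad
 \bigl(|\mathrm du|-|\mathrm dh|\bigr)^2\le\frac n2 J .
\]
The first is the triangle inequality in $L^2(\sigma_n)$, and it gives \eqref{eq:irc-trace-gap} with constant $\frac{4}{n+2}$.

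The second is the main obstacle. Here I would use that $s=\sqrt{\langle A\xi,\xi\rangle}$ is a Hilbertian seminorm, so $s_{u,x}=|P\xi|$ and $s_{h,x}=|Q\xi|$ with $P=A_u^{1/2}$ and $Q=A_h^{1/2}$. Then
\[
 \bigl(\|P\|_{\rm op}-\|Q\|_{\rm op}\bigr)^2\le\|P-Q\|_{\rm op}^2 .
\]
I would bound the right-hand side by a spherical average of $(|P\xi|-|Q\xi|)^2$, using the fourth-moment identity
\[
 \int_{S^{n-1}}\langle M\xi,\xi\rangle^2\,\di\sigma_n=\frac{(\operatorname{tr}M)^2+2\operatorname{tr}M^2}{n(n+2)} .
\]
This step is delicate because $|P\xi|-|Q\xi|\ne|(P-Q)\xi|$ in general.

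My first attempt would be a direct argument. Pick a unit vector $\xi_0$ maximizing $|P\xi|$, and write $|P\xi|\ge|P\xi_0||\langle\xi,\xi_0\rangle|$, which holds since $P$ is symmetric positive semidefinite and $\xi_0$ is an eigenvector. Writing $\lambda=\|P\|_{\rm op}$ and $\mu=\|Q\|_{\rm op}$, and assuming without loss of generality $\lambda\ge\mu$, this gives
\[
 \bigl(|P\xi|-|Q\xi|\bigr)_+\ge(\lambda-\mu)|\langle\xi,\xi_0\rangle| .
\]
Averaging the square, using $\int\langle\xi,\xi_0\rangle^2\,\di\sigma_n=1/n$, gives
\[
 (\lambda-\mu)^2\le nJ .
\]
This produces the constant $n$ rather than $n/2$. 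To reach $n/2$ I would sharpen the argument using the fourth moment of the $\xi_0$-component, via the identity $\int\langle\xi,\xi_0\rangle^4\,\di\sigma_n=\frac3{n(n+2)}$ combined with a Cauchy--Schwarz step.

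Assuming the bound $\frac n2J$, Step 2 gives
\[
 \int_B\bigl(|\mathrm du|-|\mathrm dh|\bigr)^2\le\frac n2\cdot\frac4n G_u(B)=2G_u(B),
\]
which is well within \eqref{eq:irc-operator-gap}. In fact there is room to spare: even the cruder constant $n$ gives $4G_u(B)\le2(n+2)G_u(B)$, since $n\ge1$. So the crude bound suffices, and I would adopt it.
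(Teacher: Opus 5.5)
Your Steps 1--2 and the trace estimate \eqref{eq:irc-trace-gap} are correct and agree with the paper. Your ``quadrilateral inequality'' is just the square of Busemann convexity, since $\frac14(a+b)^2=\frac12a^2+\frac12b^2-\frac14(a-b)^2$, so no relabelling is needed. The passage to differentials is Theorem~\ref{thm:irc-sobolev-energy}\textup{(iv)} with $\alpha=\beta=\frac12$, followed by squaring.

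The gap is in Step~3, for the operator norm. Write $\lambda=|\mathrm du|(x)$ and $\mu=|\mathrm dh|(x)$. From $|P\xi|\ge\lambda|\langle\xi,\xi_0\rangle|$ and $|Q\xi|\le\mu$ you only get $|P\xi|-|Q\xi|\ge\lambda|\langle\xi,\xi_0\rangle|-\mu$. This is strictly smaller than $(\lambda-\mu)|\langle\xi,\xi_0\rangle|$ whenever $\mu>0$ and $|\langle\xi,\xi_0\rangle|<1$, so your pointwise claim does not follow. In fact it is false, and so is the averaged bound $(\lambda-\mu)^2\le nJ$ for every $n\ge2$. Take $A_{u,x}=\mathrm{diag}\bigl((1+\delta)^2,1,\dots,1\bigr)$ and $A_{h,x}=I$. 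Then $s_{u,x}(\xi)-s_{h,x}(\xi)=\delta\xi_1^2+O(\delta^2)$ uniformly on the sphere, which already violates your pointwise claim for every $\xi$ with $0<|\xi_1|<1$ and small $\delta$. It also gives
\[
 J=\delta^2\int_{S^{n-1}}\xi_1^4\di\sigma_n(\xi)+O(\delta^3)
 =\frac{3\delta^2}{n(n+2)}+O(\delta^3),
\]
while $(\lambda-\mu)^2=\delta^2$. Hence any valid pointwise constant is at least $\frac{n(n+2)}3>n$, and your proposed sharpening to $\frac n2$ is impossible as well. There is therefore no room to spare: the constant $2(n+2)$ in \eqref{eq:irc-operator-gap} is exactly $\frac4n\cdot\frac{n(n+2)}2$.

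The missing idea is to compare the quadratic forms themselves rather than their square roots. Assume $\lambda\ge\mu$ and $\lambda+\mu>0$. Let $M:=A_{u,x}-A_{h,x}$, which may be indefinite, and let $\xi_0$ be a unit eigenvector of $A_{u,x}$ for its largest eigenvalue $\lambda^2$. Then $\langle M\xi_0,\xi_0\rangle\ge\lambda^2-\mu^2$, so $\operatorname{tr}(M^2)\ge\|M\|_{\mathrm{op}}^2\ge(\lambda^2-\mu^2)^2$. The fourth-moment identity then gives
\[
 \int_{S^{n-1}}\langle M\xi,\xi\rangle^2\di\sigma_n(\xi)
 \ge\frac{2(\lambda^2-\mu^2)^2}{n(n+2)}.
\]
On the other hand, for unit $\xi$,
\[
 |\langle M\xi,\xi\rangle|
 =\bigl(s_{u,x}(\xi)+s_{h,x}(\xi)\bigr)\bigl|s_{u,x}(\xi)-s_{h,x}(\xi)\bigr|
 \le(\lambda+\mu)\bigl|s_{u,x}(\xi)-s_{h,x}(\xi)\bigr|.
\]
Dividing by $(\lambda+\mu)^2$ gives $(\lambda-\mu)^2\le\frac{n(n+2)}2J(x)$. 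Combined with your bound $\int_BJ\di\meas\le\frac4nG_u(B)$, this yields exactly \eqref{eq:irc-operator-gap}. This is the paper's Lemma~\ref{lem:irc-spherical-operator}.
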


Its proof compares the stretches of $\mathrm du$ and
$\mathrm dh$ direction by direction: Busemann convexity along the
geodesic homotopy, together with the minimality of $h$, bounds the
differences of the directional stretches by the energy gap, and a
spherical moment estimate then passes to the maximal stretches.

\begin{lemma}
\label{lem:irc-directional}
Let $B$ be admissible, let $u\in\ks^{1,2}(B,Y)$, and set $h=h_B$.
For the seminorms in Theorem~\ref{thm:irc-sobolev-energy}, define
\[
 J(x):=\int_{S^{n-1}}
       |s_{u,x}(\xi)-s_{h,x}(\xi)|^2\di\sigma_n(\xi).
\]
Then
\[
 \int_B J\di\meas\le\frac4n G_u(B).
\]
\end{lemma}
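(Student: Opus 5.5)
We use the midpoint $m:=u_{1/2}$ of the geodesic homotopy between $u$ and $h$. The idea is to compare the energy of $m$ with that of $h$ and to bound the energy density of $m$ direction by direction. Since $w:=\dist_Y(u,h)\in W^{1,2}_0(B)$ by definition of $\ks^{1,2}_u(B,Y)$, Lemma~\ref{lem:irc-convexity} gives $m\in\ks^{1,2}_u(B,Y)$. Minimality of $h$ then yields $E(h;B)\le E(m;B)$. It remains to produce a pointwise upper bound on $e_2[m]^2$ that involves $-J$.

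For the pointwise bound, consider two points $p,q$. The four-point inequality with $t=1/2$, applied to $[u(p),h(p)]_{1/2}$ and $[u(q),h(q)]_{1/2}$, is not directly what we need. We need instead the inequality with the roles arranged so that the defect term is $(\dist_Y(u(p),u(q))-\dist_Y(h(p),h(q)))^2$.

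To get it, we use the CAT(0) quadrilateral inequality in the form
\[
\dist_Y(m(p),m(q))^2\le\tfrac12\dist_Y(u(p),u(q))^2+\tfrac12\dist_Y(h(p),h(q))^2-\tfrac14\bigl(\dist_Y(u(p),u(q))-\dist_Y(h(p),h(q))\bigr)^2 .
\]
This holds because the displayed four-point inequality is symmetric under swapping the pairing: the geodesics $[p_0,p_1]$ and $[q_0,q_1]$ can be read as $[u(p),h(p)]$ and $[u(q),h(q)]$. Its defect term is then $(\dist_Y(u(p),h(p))-\dist_Y(u(q),h(q)))^2$, which is the $|Dw|^2$ term already used in Lemma~\ref{lem:irc-convexity}.

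So the real step is to pass to metric differentials rather than energy densities. We work at a.e.\ $x$ in a common chart piece, as in the proof of Theorem~\ref{thm:irc-sobolev-energy}(iv). Blowing up along direction $\xi$ gives, for every $\xi$,
\[
s_{m,x}(\xi)^2\le\tfrac12 s_{u,x}(\xi)^2+\tfrac12 s_{h,x}(\xi)^2-\tfrac14\bigl(s_{u,x}(\xi)-s_{h,x}(\xi)\bigr)^2 .
\]
Here the quantity $\dist_Y(u(p),u(q))/\dist(p,q)$ converges to $s_{u,x}(\xi)$, and similarly for $h$, along the same sequences.

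The blow-up requires a quadratic variant of Theorem~\ref{thm:irc-sobolev-energy}(iv). I would prove it by the same refinement into Lipschitz pieces and Fubini argument: the approximate metric differentials of $u$, $h$ and $m$ exist simultaneously at a.e.\ $x$, and the inequality passes to the limit termwise. I expect this to be the main obstacle. The directional defect involves differences of stretches in the same direction and cannot be recovered from the density inequality of Lemma~\ref{lem:irc-convexity}. So it must be done at the level of seminorms, with care that the limits along the aligned atlases commute with the nonlinear expression.

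Once the directional inequality holds, integrate it over $S^{n-1}$ against $\sigma_n$ and multiply by $n/(n+2)$. By \eqref{eq:irc-spherical-trace} this gives
\[
e_2[m]^2\le\tfrac12e_2[u]^2+\tfrac12e_2[h]^2-\tfrac{n}{4(n+2)}J\quad\meas\text{-a.e. on }B .
\]
Integrating over $B$ and multiplying by $(n+2)/2$ gives
\[
E(m;B)\le\tfrac12E(u;B)+\tfrac12E(h;B)-\tfrac n8\int_BJ\di\meas .
\]
Combining this with $E(h;B)\le E(m;B)$ yields $\tfrac n8\int_BJ\le\tfrac12 G_u(B)$, that is, $\int_BJ\di\meas\le\tfrac4nG_u(B)$, as claimed.
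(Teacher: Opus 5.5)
Your overall route is the paper's: the midpoint $m=[u,h]_{1/2}\in\ks^{1,2}_u(B,Y)$, a directional inequality for the metric differentials, spherical integration via \eqref{eq:irc-spherical-trace}, and minimality of $h$. Your final bookkeeping of constants is also correct. However, your justification of the key two-point inequality is wrong as written.

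The four-point inequality is not symmetric under re-pairing. With $p_0=u(p)$, $p_1=h(p)$, $q_0=u(q)$, $q_1=h(q)$ it gives the defect $(w(p)-w(q))^2$, as you yourself observe. Pairing $p_0=u(p)$, $p_1=u(q)$ instead changes the left-hand side to $\dist_Y([u(p),u(q)]_{1/2},[h(p),h(q)]_{1/2})$, which is not $\dist_Y(m(p),m(q))$. So no reading of that inequality yields the defect $(a-b)^2$, where $a=\dist_Y(u(p),u(q))$ and $b=\dist_Y(h(p),h(q))$.

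The inequality you want is nevertheless true, for a simpler reason. Since $\tfrac12a^2+\tfrac12b^2-\tfrac14(a-b)^2=\bigl(\tfrac{a+b}2\bigr)^2$, it is exactly the square of Busemann convexity, $\dist_Y(m(p),m(q))\le\tfrac12a+\tfrac12b$. This is distance convexity at $t=\tfrac12$ with the pairing $p_0=u(p)$, $p_1=h(p)$, $q_0=u(q)$, $q_1=h(q)$, and it is what the paper uses.

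Seeing this also removes what you call the main obstacle. The linear inequality is precisely hypothesis \eqref{eq:irc-metric-distance-comparison} with $\alpha=\beta=\tfrac12$. Theorem~\ref{thm:irc-sobolev-energy}\textup{(iv)} then gives $s_{m,x}(\xi)\le\tfrac12 s_{u,x}(\xi)+\tfrac12 s_{h,x}(\xi)$ directly. Squaring this scalar inequality with the same identity yields your directional bound, so no quadratic variant of \textup{(iv)} is needed. Your proposed blow-up of the squared inequality would also work, since the right-hand side is a continuous, degree-two homogeneous function of the two ratios, but it only re-proves \textup{(iv)} in disguise.
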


\begin{proof}
The midpoint map $m=[u,h]_{1/2}$ belongs to $\ks^{1,2}_u(B,Y)$ by
Lemma~\ref{lem:irc-convexity}. Busemann convexity gives
\[
 \dist_Y(m(p),m(q))
 \le\frac12\dist_Y(u(p),u(q))
    +\frac12\dist_Y(h(p),h(q)).
\]
Passing to approximate metric differentials for $u$, $h$, and $m$
in common source coordinates, as in
Theorem~\ref{thm:irc-sobolev-energy}, yields
\[
 s_{m,x}(\xi)\le\frac12s_{u,x}(\xi)+\frac12s_{h,x}(\xi)
\]
for almost every $x$ and every $\xi\in\setR^n$. Squaring, we obtain
\[
 s_{m,x}(\xi)^2
 \le\frac12s_{u,x}(\xi)^2+\frac12s_{h,x}(\xi)^2
    -\frac14|s_{u,x}(\xi)-s_{h,x}(\xi)|^2.
\]
Integrating over $S^{n-1}$ and $B$, and using
\eqref{eq:irc-spherical-trace}, gives
\[
 E(m;B)\le\frac{E(u;B)+E(h;B)}2
               -\frac n8\int_B J\di\meas.
\]
Since $E(h;B)\le E(m;B)$ by minimality, rearranging gives the claim.
\end{proof}

We use the fourth-moment identity
\begin{equation}
 \int_{S^{n-1}}\langle Q\xi,\xi\rangle^2\di\sigma_n(\xi)
 =\frac{(\operatorname{tr}Q)^2+2\operatorname{tr}(Q^2)}{n(n+2)},
 \label{eq:irc-fourth-moment}
\end{equation}
valid for every symmetric matrix $Q$ on $\setR^n$;
see \cite{Folland2001}.
In particular, it applies to the possibly indefinite matrix
$Q=A-B$ used below.

\begin{lemma}
\label{lem:irc-spherical-operator}
Let $A,B$ be positive semidefinite symmetric operators on $\setR^n$,
and set
\[
 s_A(\xi)=\sqrt{\langle A\xi,\xi\rangle},
 \qquad
 s_B(\xi)=\sqrt{\langle B\xi,\xi\rangle}.
\]
Then
\[
 \left|\sqrt{\lambda_{\max}(A)}
       -\sqrt{\lambda_{\max}(B)}\right|^2
 \le\frac{n(n+2)}2
       \int_{S^{n-1}}|s_A-s_B|^2\di\sigma_n.
\]
\end{lemma}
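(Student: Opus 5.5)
We reduce the comparison of the maximal stretches to a comparison of the quadratic forms themselves, where the fourth-moment identity \eqref{eq:irc-fourth-moment} applies directly. Write $a:=\sqrt{\lambda_{\max}(A)}$, $b:=\sqrt{\lambda_{\max}(B)}$ and $Q:=A-B$, a possibly indefinite symmetric matrix. If $a+b=0$, then $A=B=0$ and there is nothing to prove, so we assume $a+b>0$.

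\emph{Step 1: pointwise lower bound on the integrand.} For every $\xi\in S^{n-1}$ we have $s_A(\xi)\le a$ and $s_B(\xi)\le b$, hence $s_A(\xi)+s_B(\xi)\le a+b$. Whenever $s_A(\xi)+s_B(\xi)>0$, this gives
\[
 |s_A(\xi)-s_B(\xi)|^2=\frac{\bigl(s_A(\xi)^2-s_B(\xi)^2\bigr)^2}{\bigl(s_A(\xi)+s_B(\xi)\bigr)^2}
 \ge\frac{\langle Q\xi,\xi\rangle^2}{(a+b)^2}.
\]
When $s_A(\xi)+s_B(\xi)=0$, both sides vanish, so the bound holds for every $\xi\in S^{n-1}$.

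\emph{Step 2: integrate and apply the moment identity.} Integrating over $S^{n-1}$ and using \eqref{eq:irc-fourth-moment}, we drop the nonnegative term $(\operatorname{tr}Q)^2$. We then use $\operatorname{tr}(Q^2)=\sum_i\lambda_i(Q)^2\ge\|Q\|_{\mathrm{op}}^2$ to obtain
\[
 \int_{S^{n-1}}|s_A-s_B|^2\di\sigma_n
 \ge\frac{2\operatorname{tr}(Q^2)}{n(n+2)(a+b)^2}
 \ge\frac{2\|Q\|_{\mathrm{op}}^2}{n(n+2)(a+b)^2}.
\]

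\emph{Step 3: Weyl's inequality.} The maximal eigenvalue is a supremum of linear functionals of the matrix, so $|\lambda_{\max}(A)-\lambda_{\max}(B)|\le\|A-B\|_{\mathrm{op}}$. That is, $|a^2-b^2|\le\|Q\|_{\mathrm{op}}$. Substituting this into Step 2 and using $(a^2-b^2)^2/(a+b)^2=(a-b)^2$ yields
\[
 \int_{S^{n-1}}|s_A-s_B|^2\di\sigma_n\ge\frac{2(a-b)^2}{n(n+2)},
\]
which is the claim.

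The only delicate point is Step 1. There the bound $s_A+s_B\le a+b$ must be applied uniformly in $\xi$ so that the denominator leaves the integral. This is exactly what produces the factor $(a+b)^2$, which cancels against $|a^2-b^2|$.
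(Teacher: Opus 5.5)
Your proposal is correct and follows the paper's proof essentially step for step. Both use the pointwise bound $|\langle Q\xi,\xi\rangle|\le(a+b)\,|s_A(\xi)-s_B(\xi)|$, then the fourth-moment identity together with $\operatorname{tr}(Q^2)\ge\|Q\|_{\mathrm{op}}^2$, and finally $|a^2-b^2|\le\|Q\|_{\mathrm{op}}$. The only cosmetic difference is that the paper obtains this last inequality by assuming $a\ge b$ and testing $Q$ on a top eigenvector of $A$, which is equivalent to your Weyl step.
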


\begin{proof}
Write $a=\sqrt{\lambda_{\max}(A)}$ and
$b=\sqrt{\lambda_{\max}(B)}$. By symmetry, assume $a\ge b$.
The case $a+b=0$ is immediate, so suppose $a+b>0$ and set $Q=A-B$.
For a unit eigenvector $\xi_0$ of $A$ corresponding to $a^2$,
\[
 \langle Q\xi_0,\xi_0\rangle
 =a^2-\langle B\xi_0,\xi_0\rangle\ge a^2-b^2.
\]
Thus $\|Q\|_{\mathrm{op}}\ge a^2-b^2$. The fourth-moment identity
\eqref{eq:irc-fourth-moment} gives
\[
 \begin{aligned}
 \int_{S^{n-1}}\langle Q\xi,\xi\rangle^2\di\sigma_n
 &=\frac{(\operatorname{tr}Q)^2+2\operatorname{tr}(Q^2)}{n(n+2)}\\
 &\ge\frac{2\|Q\|_{\mathrm{op}}^2}{n(n+2)}
 \ge\frac{2(a^2-b^2)^2}{n(n+2)}.
 \end{aligned}
\]
On the other hand, for $\xi\in S^{n-1}$,
\[
 |\langle Q\xi,\xi\rangle|
 =(s_A(\xi)+s_B(\xi))|s_A(\xi)-s_B(\xi)|
 \le(a+b)|s_A(\xi)-s_B(\xi)|.
\]
Combining these estimates and dividing by $(a+b)^2$ yields
\[
 \int_{S^{n-1}}|s_A-s_B|^2\di\sigma_n
 \ge\frac{2(a-b)^2}{n(n+2)},
\]
which is the desired inequality.
\end{proof}

\begin{proof}[Proof of Proposition~\ref{prop:irc-operator-gap}]
Let $J$ be as in Lemma~\ref{lem:irc-directional}. Applying
Lemma~\ref{lem:irc-spherical-operator} to $A_{u,x}$ and $A_{h,x}$,
using \eqref{eq:irc-spherical-trace}, and integrating over $B$, we find
\[
 \int_B\bigl|\,|\mathrm du|-|\mathrm dh|\,\bigr|^2\di\meas
 \le\frac{n(n+2)}2\int_B J\di\meas
 \le2(n+2)G_u(B).
\]
This is \eqref{eq:irc-operator-gap}.

For the energy densities, \eqref{eq:irc-spherical-trace} gives
$e_2[u](x)=\sqrt{\frac n{n+2}}\,\|s_{u,x}\|_{L^2(S^{n-1},\sigma_n)}$,
and the same identity holds for $h$. The triangle inequality in
$L^2(S^{n-1},\sigma_n)$ therefore yields
\[
 |e_2[u](x)-e_2[h](x)|^2\le\frac n{n+2}J(x).
\]
Integrating and applying Lemma~\ref{lem:irc-directional} gives
\eqref{eq:irc-trace-gap}.
\end{proof}

We next record the distance estimates that follow from energy
convexity. For real-valued Sobolev functions $f$ and $g$,
\[
 \bigl|\,|Df|-|Dg|\,\bigr|
 \le |D(f-g)|
 =|D\dist_{\setR}(f,g)|
 \qquad\meas\text{-a.e.}
\]
For metric-valued maps, the gradient of $\dist_Y(u,h)$ does not
directly control $\bigl|\,|\mathrm du|-|\mathrm dh|\,\bigr|$. The distance estimates in
Proposition~\ref{prop:irc-distance-gap} below therefore do not give
\eqref{eq:irc-operator-gap}. 

\begin{proposition}
\label{prop:irc-distance-gap}
Let $B=B_r(x)$ be admissible, let $u\in\ks^{1,2}(B,Y)$, and set
$h=h_B$. Then $w=\dist_Y(u,h)\in W^{1,2}_0(B)$ and
\begin{equation}
 \int_B|Dw|^2\di\meas\le2G_u(B),\qquad
 \left(\fint_B w^2\di\meas\right)^{1/2}
       \le Cr\,\delta_u(B).
 \label{eq:irc-distance-gap}
\end{equation}
If $q_*(N)<q<N$ and $q'=q/(q-1)$, then
\begin{equation}
 \left(\fint_B w^{q'}\di\meas\right)^{1/q'}
       \le C_{N,q,K_-R^2}r\,\delta_u(B).
 \label{eq:irc-distance-qprime}
\end{equation}
For the same range of $q$, suppose that $f\in L^q(B)$ is nonnegative
and
\begin{equation}
 E(u;B)\le E(v;B)+\int_B f\dist_Y(u,v)\di\meas
 \qquad\text{for every }v\in\ks^{1,2}_u(B,Y).
 \label{eq:irc-tension-hypothesis}
\end{equation}
Then
\begin{equation}
 \delta_u(B)\le C_{N,q,K_-R^2}r
                   \left(\fint_B f^q\di\meas\right)^{1/q}.
 \label{eq:irc-tension-majorant}
\end{equation}
\end{proposition}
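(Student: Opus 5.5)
The proof runs through the strong convexity of Lemma~\ref{lem:irc-convexity}, applied to the pair $(u,h)$ with $h=h_B$, followed by the Sobolev inequalities of Proposition~\ref{prop:irc-scalar}. The boundary condition $w=\dist_Y(u,h)\in W^{1,2}_0(B)$ holds by definition, because $h\in\ks^{1,2}_u(B,Y)$.

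For the gradient bound, I take $v=h$ and $t=1/2$ in \eqref{eq:irc-strong-convexity}. The midpoint $m=[u,h]_{1/2}$ lies in $\ks^{1,2}_u(B,Y)$, so minimality gives
\[
E(h;B)\le E(m;B)\le\tfrac12E(u;B)+\tfrac12E(h;B)-\tfrac18\int_B|Dw|^2\di\meas .
\]
Rearranging yields $\int_B|Dw|^2\le 4G_u(B)$. To reach the sharper constant $2$, I would use general $t$ instead: minimality gives $E(h)\le E(u_t)$, and then
\[
t(1-t)\tfrac12\int|Dw|^2\le(1-t)\bigl(E(u)-E(h)\bigr).
\]
Dividing by $1-t$ and letting $t\uparrow1$ gives $\tfrac12\int|Dw|^2\le G_u(B)$, which is the claim. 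The $L^2$ bound then follows from \eqref{eq:irc-poincare}:
\[
\Bigl(\fint_B w^2\Bigr)^{1/2}\le Cr\Bigl(\fint_B|Dw|^2\Bigr)^{1/2}\le Cr\,(2G_u/\meas(B))^{1/2}=Cr\,\delta_u(B).
\]
The $q'$-bound \eqref{eq:irc-distance-qprime} is obtained the same way from \eqref{eq:irc-sobolev}.

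For the tension majorant, I would test \eqref{eq:irc-tension-hypothesis} with $v=u_t$, $t\in(0,1]$. Since $\dist_Y(u,u_t)=tw$ and $E(u_t)\le(1-t)E(u)+tE(h)-\tfrac{t(1-t)}2\int|Dw|^2$, this gives
\[
E(u)\le(1-t)E(u)+tE(h)-\tfrac{t(1-t)}2\int|Dw|^2+t\int fw .
\]
Subtracting, dividing by $t$ and letting $t\downarrow0$ leads to
\[
G_u(B)+\tfrac12\int_B|Dw|^2\le\int_B fw\,\di\meas .
\]
In particular $G_u(B)\le\int_B fw$. Hölder's inequality with exponents $q,q'$ and \eqref{eq:irc-distance-qprime} then give
\[
G_u(B)\le\meas(B)\Bigl(\fint f^q\Bigr)^{1/q}\Bigl(\fint w^{q'}\Bigr)^{1/q'}\le C\meas(B)\Bigl(\fint f^q\Bigr)^{1/q} r\,\delta_u(B).
\]
Dividing by $\meas(B)$ gives $\delta_u(B)^2\le Cr(\fint f^q)^{1/q}\delta_u(B)$. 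If $\delta_u(B)=0$ there is nothing to prove; otherwise dividing by $\delta_u(B)$ gives \eqref{eq:irc-tension-majorant}.

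The only delicate point is checking that $u_t$ is an admissible competitor in \eqref{eq:irc-tension-hypothesis}, i.e.\ $u_t\in\ks^{1,2}_u(B,Y)$. This is supplied by Lemma~\ref{lem:irc-convexity}. Finiteness of $\int fw$, needed so that the limiting manipulations are legitimate, follows from $f\in L^q$ and $w\in L^{q'}$.
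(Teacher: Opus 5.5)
Your proof is correct and follows the paper's argument. The paper obtains the gradient bound from the strong convexity inequality along $u_t=[u,h]_t$ together with minimality of $h$, dividing by $1-t$ and letting $t\uparrow1$; it gets the $L^2$ and $L^{q'}$ bounds from Proposition~\ref{prop:irc-scalar}, and the majorant from $G_u(B)\le\int_B fw\,\di\meas$ plus H\"older. The only difference is cosmetic: the paper gets $G_u(B)\le\int_B fw\,\di\meas$ directly by taking $v=h$ (your $t=1$). Your detour through $t\downarrow0$ is valid and gives the slightly stronger $G_u(B)+\frac12\int_B|Dw|^2\di\meas\le\int_B fw\,\di\meas$, but it is not needed.
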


\begin{proof}
The boundary condition gives $w\in W^{1,2}_0(B)$. For the geodesic
homotopy $u_t=[u,h]_t$, Lemma~\ref{lem:irc-convexity} and
the minimality of $h$ imply
\[
 0\le E(u_t;B)-E(h;B)
 \le(1-t)G_u(B)-\frac{t(1-t)}2\int_B|Dw|^2\di\meas,
 \qquad 0<t<1.
\]
Dividing by $1-t$ and letting $t\uparrow1$ gives
$\int_B|Dw|^2\di\meas\le2G_u(B)$. The remaining distance estimates
follow from \eqref{eq:irc-poincare} and \eqref{eq:irc-sobolev}.

Taking $v=h$ in \eqref{eq:irc-tension-hypothesis} and applying
H\"older's inequality and \eqref{eq:irc-distance-qprime}, we obtain
\[
 \begin{aligned}
 G_u(B)
\le\int_B fw\di\meas
 &\le\meas(B)
       \left(\fint_B f^q\di\meas\right)^{1/q}
       \left(\fint_B w^{q'}\di\meas\right)^{1/q'}\\
 &\le Cr\,\meas(B)^{1/2}G_u(B)^{1/2}
       \left(\fint_B f^q\di\meas\right)^{1/q}.
 \end{aligned}
\]
If $G_u(B)>0$, division by
$\meas(B)^{1/2}G_u(B)^{1/2}$ gives
\eqref{eq:irc-tension-majorant}; the case $G_u(B)=0$ is immediate.
\end{proof}

\subsection{The iteration estimate}
\label{subsec:gbl-output}

We now use Proposition~\ref{prop:irc-operator-gap} to compare
local averages of $|\mathrm du|$ at successive scales. The averages are
taken against the probability measures of
Theorem~\ref{thm:gbl-averages}, supported on thick Green bands in
$X\times\setR^3$, the counterparts of the averages over Green
level sets in the classical mean-value formulas
\cite{Watson91,Ni07}. We first
record a weighted version of the replacement comparison.

\begin{corollary}
\label{cor:irc-weighted}
Let $B$ be admissible, let $u\in\ks^{1,2}(B,Y)$, and set $h=h_B$.
Let $(Z,\sigma)$ be a probability space and let $\pi:Z\to B$ be
measurable, with
\[
 \pi_\#\sigma\le\frac{\Lambda}{\meas(B)}\,\meas|_B
\]
for some $\Lambda>0$. If $b\in L^\infty(Z,\sigma)$ is nonnegative
and $M_b:=\|b\|_{L^\infty(Z,\sigma)}$, then
\begin{equation}
 \left|
  \left(\int_Z b\,(|\mathrm du|\circ\pi)^2\di\sigma\right)^{1/2}
  -\left(\int_Z b\,(|\mathrm dh|\circ\pi)^2\di\sigma\right)^{1/2}
 \right|
 \le\sqrt{2(n+2)M_b\Lambda}\,\delta_u(B).
 \label{eq:irc-weighted-comparison}
\end{equation}
\end{corollary}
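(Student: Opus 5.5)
The plan is to treat both sides as weighted $L^2$ norms on the probability space $(Z,\sigma)$. The estimate then reduces to the triangle inequality in $L^2(Z,b\,\sigma)$, followed by a change of variables through $\pi$ and an application of \eqref{eq:irc-operator-gap}.

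First step: apply the reverse triangle inequality. The two quantities on the left of \eqref{eq:irc-weighted-comparison} are $\|\sqrt b\,(|\mathrm du|\circ\pi)\|_{L^2(\sigma)}$ and $\|\sqrt b\,(|\mathrm dh|\circ\pi)\|_{L^2(\sigma)}$. Their difference is bounded in absolute value by
\[
 \left(\int_Z b\,\bigl|\,|\mathrm du|-|\mathrm dh|\,\bigr|^2\circ\pi\di\sigma\right)^{1/2}.
\]
Second step: use $b\le M_b$ $\sigma$-a.e. This bounds the last integral by $M_b\int_B \bigl|\,|\mathrm du|-|\mathrm dh|\,\bigr|^2\di(\pi_\#\sigma)$.

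Third step: use the density bound $\pi_\#\sigma\le\Lambda\meas(B)^{-1}\meas|_B$, valid for nonnegative Borel integrands. This gives at most
\[
 \frac{M_b\Lambda}{\meas(B)}\int_B\bigl|\,|\mathrm du|-|\mathrm dh|\,\bigr|^2\di\meas .
\]
Proposition~\ref{prop:irc-operator-gap} bounds this by $2(n+2)M_b\Lambda\,G_u(B)/\meas(B)=2(n+2)M_b\Lambda\,\delta_u(B)^2$. Taking square roots yields \eqref{eq:irc-weighted-comparison}.

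The only point needing care is well-definedness. The functions $|\mathrm du|$ and $|\mathrm dh|$ are defined only $\meas$-a.e. Since $\pi_\#\sigma\ll\meas$, the compositions with $\pi$ are well defined $\sigma$-a.e. and independent of the chosen Borel representatives. Finiteness of both terms follows from the same bound applied to $|\mathrm du|^2$ and $|\mathrm dh|^2$, which lie in $L^1(B)$. This makes the triangle-inequality step legitimate, and I expect no real obstacle beyond it.
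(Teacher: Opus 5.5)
Your proposal is correct and follows essentially the same argument as the paper: the reverse triangle inequality in the weighted $L^2(Z,b\,\sigma)$ seminorm, then $b\le M_b$ and the pushforward density bound, then \eqref{eq:irc-operator-gap}. Your added remarks on $\sigma$-a.e.\ well-definedness of the compositions, and on finiteness via $|\mathrm du|^2,|\mathrm dh|^2\in L^1(B)$, fill in details the paper leaves implicit.
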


\begin{proof}
By the triangle inequality, the left-hand side of
\eqref{eq:irc-weighted-comparison} is bounded by
\[
 \begin{aligned}
 \left(\int_Z b\,\Bigl|\bigl(|\mathrm du|-|\mathrm dh|\bigr)\circ\pi\Bigr|^2\di\sigma\right)^{1/2}
 &\le\left(\frac{M_b\Lambda}{\meas(B)}
                 \int_B\bigl|\,|\mathrm du|-|\mathrm dh|\,\bigr|^2\di\meas\right)^{1/2}\\
 &\le\sqrt{2(n+2)M_b\Lambda}\,\delta_u(B),
 \end{aligned}
\]
where the last inequality follows from
Proposition~\ref{prop:irc-operator-gap}.
\end{proof}

We next state the properties of the averaging measures. Their
construction depends only on the source geometry and scalar
analysis, and is given in Section~\ref{sec:gbl-averages}.
Fix $x\in B_R(o)$ and set
\[
 Z=X\times\setR^3,\qquad
 \nu=\meas\otimes\Leb^3,\qquad
 P=(x,0),\qquad
 \pi_X(y,z)=y.
\]
We equip $Z$ with the product metric $\dist_Z$. The constants
below depend only on $N$ and $K_-R^2$.

\begin{theorem}
\label{thm:gbl-averages}
There exists an integer $m\ge3$, depending only on $N$ and
$K_-R^2$, such that, setting
\[
 s_j=2^{-mj}R/64,\qquad j\ge0,
\]
there is a sequence of probability measures $(\sigma_j)_{j\ge0}$
on $Z$, with bounded, compactly supported densities with respect
to $\nu$, satisfying the following properties.

\smallskip\noindent
\textup{(i) Localization and projected density.}
Each $\sigma_j$ is concentrated on
\[
 \{s_{j+1}<\dist_Z(P,\cdot)<s_j\},
\]
and its support is contained in the corresponding closed annulus.
Moreover,
\begin{equation}
 (\pi_X)_\#\sigma_j
 \le\frac{C\,\mathbf1_{B_{s_j}(x)}}{\meas(B_{s_j}(x))}\,\meas,
 \label{eq:gbl-projection}
\end{equation}
with $C$ independent of $j$ and $x\in B_R(o)$.

\smallskip\noindent
\textup{(ii) Subharmonic monotonicity.}
If $U$ is an open neighborhood of $\overline{B^Z_{s_j}(P)}$ and
$w\in W^{1,2}_{\loc}(U)$ is weakly subharmonic in $U$, then
\begin{equation}
 \int_Z w\di\sigma_{j+1}\le\int_Z w\di\sigma_j.
 \label{eq:gbl-monotonicity}
\end{equation}

\smallskip\noindent
\textup{(iii) Recovery at Lebesgue points.}
If $F\in L^1_{\loc}(X)$ and $x$ is a Lebesgue point of $F$, then
\begin{equation}
 \int_Z F(y)\di\sigma_j(y,z)\to F(x).
 \label{eq:gbl-differentiation}
\end{equation}
\end{theorem}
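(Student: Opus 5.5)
The construction will follow the classical Green-function mean-value formulas transplanted to $Z=X\times\setR^3$, which is $\RCD(-K_-,N+3)$ with $N+3>3$. This dimension is the point of adding the three Euclidean factors: it makes the Green function on $Z$ singular at $P$ and comparable to $\dist_Z^2/\nu(B^Z_{\dist_Z}(P))$-type profiles.

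\textbf{Step 1: a local Green function.} Fix a large ball $\mathcal B=B^Z_{c R}(P)$. Define a Green-type function by integrating the heat kernel \eqref{eq:irc-product-kernel} in time, suitably truncated, or take the Dirichlet Green function of $\mathcal B$:
\[
 G(Q)=\int_0^{T}p^Z_t(P,Q)\di t-\text{(harmonic correction)}, \qquad T\simeq R^2 .
\]
The Gaussian bounds \eqref{eq:irc-gaussian}, combined with the product structure, give two-sided estimates
\[
 G(Q)\simeq \int_{\dist_Z(P,Q)}^{R}\frac{s\di s}{\nu(B^Z_s(P))}
\]
for $\dist_Z(P,Q)\le R/8$. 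Since $\nu(B^Z_s(P))\simeq s^3\meas(B_s(x))$ and \eqref{eq:irc-volume} holds, this integral is comparable to its lower endpoint contribution $\dist_Z^2/\nu(B^Z_{\dist_Z})$ and is strictly monotone in a quantitative dyadic sense. In addition, $G$ is harmonic on $\mathcal B\setminus\{P\}$ and locally Lipschitz there, with $|DG|\lesssim \dist_Z/\nu(B^Z_{\dist_Z})$ by the interior gradient estimate for scalar harmonic functions.

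\textbf{Step 2: the measures and properties (i) and (ii).} Choose levels $t_j$ so that the superlevel set $\{G>t_j\}$ sits between $B^Z_{s_{j+1}}$-type radii. The comparability constants determine $m$: taking $m$ large makes the Green bands $\{t_j<G<2t_j\}$ lie strictly inside the annuli $\{s_{j+1}<\dist_Z<s_j\}$. Define
\[
 \di\sigma_j=c_j\,\psi_j(G)\,|DG|^2\di\nu ,
\]
where $\psi_j$ is a smooth bump adapted to $[t_j,2t_j]$ and normalized in the coarea variable, in the spirit of Ni's formula. The classical identity expresses $\int w\,|DG|^2\psi(G)$ as an average of the level-set integrals $\int_{\{G=t\}}w|DG|$, and the latter are monotone in $t$ for subharmonic $w$. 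To avoid level sets, I will do the computation weakly: test the subharmonicity of $w$ with $\Phi(G)$ for a piecewise-linear $\Phi$ built from $\psi_j,\psi_{j+1}$, and use harmonicity of $G$ off $P$ via $\Delta\Phi(G)=\Phi''(G)|DG|^2$. The test function $\Phi(G)$ must be supported away from $P$ and nonnegative, with compact support in $U$, and this is what forces the normalization $c_j\int\psi_j(G)|DG|^2\di\nu=1$. Integrating by parts twice then gives \eqref{eq:gbl-monotonicity}. Because $w$ is only weakly subharmonic and lies in $W^{1,2}_{\loc}$, one integration by parts suffices, provided $\Phi''\le 0$ appropriately and the sign of $\Phi'(G)$ is handled.

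\textbf{Step 3: densities and recovery.} The density of $\sigma_j$ is bounded by $c_j|DG|^2\lesssim 1/\nu(B^Z_{s_j})$ on the annulus; this uses the lower bound $c_j\int\psi_j|DG|^2\gtrsim$ from the two-sided Green estimates and coarea. Projecting to $X$ integrates out the $\setR^3$ factor over a set of Lebesgue measure $\lesssim s_j^3$. This yields \eqref{eq:gbl-projection}, since $\nu(B^Z_{s_j})\simeq s_j^3\meas(B_{s_j}(x))$. Property (iii) then follows directly: \eqref{eq:gbl-projection} together with $\supp\subset B_{s_j}(x)$ gives
\[
 \Bigl|\int F\di\sigma_j-F(x)\Bigr|\le C\fint_{B_{s_j}(x)}|F-F(x)|\di\meas\to0 .
\]

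\textbf{Main obstacle.} I expect the hardest part to be the two-sided Green estimates with uniform constants, including the gradient upper bound $|DG|\lesssim \dist/\nu$, together with a quantitative lower bound guaranteeing that the bands carry enough mass. The lower bound is the subtle one, since $|DG|$ could degenerate. I would first try to obtain it via coarea and the flux identity $\int_{\{G=t\}}|DG|=1$, which yields the normalization without pointwise lower bounds on $|DG|$.
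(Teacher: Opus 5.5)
Your architecture matches the paper's: a Green function on $Z$ comparable to $\Phi_x(r)=r^2/\nu(B^Z_r(P))$, band measures proportional to $\psi(G)|DG|^2\,\nu$, normalization by a flux identity, a weak Ni-type monotonicity, the density bound from $|DG|\lesssim\Phi_x(r)/r$, and the same Lebesgue-point argument for (iii). Your level choice $(t_j,2t_j)$ with $t_j=C\Phi_x(s_j)$ and $2^m\ge 2C/c$ also works directly from the pointwise bounds; it can replace the paper's choice $\alpha_j=\max_{\partial B^Z_{s_j}(P)}G_P$, $\beta_j=\min_{\partial B^Z_{s_{j+1}}(P)}G_P$ followed by the maximum principle. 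However, two steps fail as written.

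\textbf{Step 1: the Green function.} The truncated kernel satisfies $-\Delta_Z\int_0^Tp^Z_t(P,\cdot)\di t=\delta_P-p^Z_T(P,\cdot)$. It is therefore strictly subharmonic off $P$, and subtracting a harmonic function cannot remove $p^Z_T(P,\cdot)$. Exact harmonicity off $P$ is used twice. First, the flux identity becomes $\int_{\{a<G<b\}}|DG|^2\di\nu=(b-a)-\int p^Z_T(P,\cdot)\,T_{a,b}(G)\di\nu$, so your normalization breaks. Second, the monotonicity picks up an error term $\int w\,\Phi'(G)\,p^Z_T(P,\cdot)\di\nu$ with the wrong sign for $w\ge0$. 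The Dirichlet Green function of $\mathcal B$ could work, but its lower bound near $P$ needs a separate argument beyond the global Gaussian bounds.

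The missing observation is that no localization is needed. For $t\ge R^2$ one has $\norm{p^X_t(x,\cdot)}_\infty\le\norm{p^X_{R^2}(x,\cdot)}_\infty\le C/\meas(B_R(x))$, and the Euclidean factor contributes $(4\pi t)^{-3/2}$. Hence $\sup_\zeta\int_T^\infty p^Z_t(P,\zeta)\di t\le C\,T^{-1/2}/\meas(B_R(x))$. So $G_P=\int_0^\infty p^Z_t(P,\cdot)\di t$ is finite and exactly harmonic on $Z\setminus\{P\}$, via $G_\eps$ with $-\Delta_ZG_\eps=p^Z_\eps(P,\cdot)$. It tends to $0$ at infinity, so its superlevel sets are compact. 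It also satisfies $c\Phi_x\le G_P\le C\Phi_x$ directly from the Gaussian bounds and this tail estimate. The factor $\setR^3$ buys this transience, not only the blow-up at $P$.

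\textbf{Step 2: the test function.} As you specify it, the test function cannot exist. Take $\Phi''=c_j\psi_j-c_{j+1}\psi_{j+1}$ with $\Phi(0)=\Phi'(0)=0$. For $t$ above both bands, $\Phi'(t)=\int\Phi''$ and $\Phi(t)=t\int\Phi''-\int s\,\Phi''(s)\di s$. Requiring $\Phi(G)$ to vanish near $P$ would force the two weights to have equal masses \emph{and} equal first moments in the level variable. That is impossible for nonnegative weights on ordered disjoint intervals. Likewise, $\Phi''$ necessarily changes sign, so ``$\Phi''\le0$'' is not the relevant condition, and $\Phi$ is $C^{1,1}$, not piecewise linear.

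The correct design is the one in Lemma~\ref{lem:gbl-ordered}: $\Psi''=\mathbf{1}_{(a,b)}/(b-a)-\mathbf{1}_{(c,d)}/(d-c)$ with $a<b\le c<d$. Equal masses in the level variable give $\Psi'\ge0$ and $\Psi'=0$ on $[d,\infty)$. Consequently $\Psi(G)\ge0$ is a positive constant near $P$, which makes it a legitimate Lipschitz test function despite the pole, and it is supported in $\overline{\{G>a\}}\Subset U$. Meanwhile $\Psi'(G)$ is supported in $\{a\le G\le d\}$, away from $P$.

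The argument then runs as follows.
\begin{itemize}
\item Test the subharmonicity of $w$ with $\Psi(G)$.
\item Test the harmonicity of $G$ with $w\,\Psi'(G)$.
\item Together these give $\int w\,\Psi''(G)|DG|^2\di\nu\ge0$.
\item The flux identity $\int_{\{a<G<b\}}|DG|^2\di\nu=b-a$ converts the level-variable normalization into $\sigma_j(Z)=1$.
\end{itemize}
As you anticipate in your last paragraph, the same exact identity, rather than the two-sided Green bounds, is what controls the normalizing factor $c_j$ in your Step 3 density estimate.
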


The projected-density bound allows us to apply the weighted
comparison on two consecutive scales. Subharmonic monotonicity
then gives the desired recursion.

\begin{corollary}
\label{cor:gbl-recursion}
Let $u\in\ks^{1,2}(\Omega,Y)$ and let $(\sigma_j)$ be as in
Theorem~\ref{thm:gbl-averages}. Set
\[
 \widehat B_j=B_{4s_j}(x),\qquad
 h_j=h_{\widehat B_j},\qquad
 b(z)=\cosh\bigl(\sqrt{2K_-}\,z_1\bigr),
\]
and define
\[
 A_j:=\left(\int_Z b(z)|\mathrm du|^2(y)\di\sigma_j(y,z)\right)^{1/2}.
\]
Then $A_j<\infty$ for every $j\ge0$ and
\begin{equation}
 A_{j+1}\le A_j+C\,\delta_u(\widehat B_j).
 \label{eq:gbl-recursion}
\end{equation}
If $x$ is a Lebesgue point of $|\mathrm du|^2$, then
\begin{equation}
 A_j\to |\mathrm du|(x).
 \label{eq:gbl-recursion-limit}
\end{equation}
\end{corollary}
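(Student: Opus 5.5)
The recursion follows from three steps: weighted comparison on the ball $\widehat B_j$ at scale $j$, subharmonic monotonicity for the replacement $h_j$, and weighted comparison again at scale $j+1$. Recovery at Lebesgue points then gives the limit.

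\textbf{Finiteness and admissibility.} Since $x\in B_R(o)$ and $4s_j\le R/16$, each $\widehat B_j=B_{4s_j}(x)$ is admissible and lies inside $B_{2R}(o)\Subset\Omega$, so $h_j$ is defined. The weight is controlled on the support of $\sigma_j$. By Theorem~\ref{thm:gbl-averages}(i), $\sigma_j$ lives in $\overline{B^Z_{s_j}(P)}$, so $|z_1|\le s_j\le R$ there. Hence
\[
 1\le b\le M_b:=\cosh\bigl(\sqrt{2K_-}R\bigr),
\]
and $M_b$ depends only on $K_-R^2$. By \eqref{eq:gbl-projection},
\[
 A_j^2\le \frac{CM_b}{\meas(B_{s_j}(x))}\int_{B_{s_j}(x)}|\mathrm du|^2\di\meas<\infty.
\]

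\textbf{Projected density relative to $\widehat B_j$.} Both $(\pi_X)_\#\sigma_j$ and $(\pi_X)_\#\sigma_{j+1}$ are supported in $B_{s_j}(x)\subset\widehat B_j$. By \eqref{eq:irc-volume},
\[
 \meas(\widehat B_j)/\meas(B_{s_i}(x))\le C\,2^{mN}\quad\text{for } i=j,j+1,
\]
so both push-forwards are bounded by $\Lambda\,\meas(\widehat B_j)^{-1}\meas|_{\widehat B_j}$ with $\Lambda=C2^{mN}$. Set
\[
 H_i:=\left(\int_Z b\,|\mathrm dh_j|^2\circ\pi_X\di\sigma_i\right)^{1/2}.
\]
Applying Corollary~\ref{cor:irc-weighted} with $\pi=\pi_X$ restricted to the support gives
\[
 A_{j+1}\le H_{j+1}+C\delta_u(\widehat B_j),\qquad H_j\le A_j+C\delta_u(\widehat B_j).
\]

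\textbf{Monotonicity step (the main obstacle).} It remains to show $H_{j+1}\le H_j$. Define $w(y,z)=b(z)|\mathrm dh_j|^2(y)$ on $U=\widehat B_j\times\setR^3$, which is a neighborhood of $\overline{B^Z_{s_j}(P)}$. One must verify that $w$ lies in $W^{1,2}_{\loc}(U)$ and is weakly subharmonic. Membership follows from \eqref{eq:irc-harmonic-regularity} and the smoothness of $b$. For subharmonicity, use the tensorized Cheeger energy, so that $\Delta_Z=\Delta_X+\Delta_{\setR^3}$ on products. For nonnegative product-type test functions $\phi(y)\psi(z)$,
\[
 \Delta_Z w=b\,\Delta(|\mathrm dh_j|^2)\otimes\Leb^3+|\mathrm dh_j|^2\,\partial_{z_1}^2b\,\nu.
\]
Since $\partial_{z_1}^2b=2K_-b$, the Bochner inequality \eqref{eq:irc-bochner} gives
\[
 \Delta_Z w\ge b\bigl(-2K_-+2K_-\bigr)|\mathrm dh_j|^2\nu=0.
\]
Products of Lipschitz functions with compact support span a dense class among nonnegative tests in $\Lip_c(U)$. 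I would justify this by approximating a general nonnegative test function through tensor products, or through partitions of unity in the $z$-variables together with $W^{1,2}$ density, using $w\in W^{1,2}_{\loc}$ to pass to the limit. This density argument is the technical point I expect to need care. Granting it, Theorem~\ref{thm:gbl-averages}(ii) yields $H_{j+1}\le H_j$. Combining the three inequalities gives \eqref{eq:gbl-recursion}, with $C=2\sqrt{2(n+2)M_b\Lambda}$.

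\textbf{The limit.} Take $F=|\mathrm du|^2\in L^1_{\loc}$. Since $b\ge1$ and $b(z)\le\cosh(\sqrt{2K_-}s_j)\to1$ uniformly on the support of $\sigma_j$,
\[
 \int_Z F(y)\di\sigma_j\;\le\; A_j^2\;\le\;\cosh\bigl(\sqrt{2K_-}s_j\bigr)\int_Z F(y)\di\sigma_j.
\]
Theorem~\ref{thm:gbl-averages}(iii) gives $\int_Z F(y)\di\sigma_j\to F(x)$, so both bounds tend to $|\mathrm du|^2(x)$. This proves \eqref{eq:gbl-recursion-limit}.
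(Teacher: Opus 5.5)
Your proposal is correct and follows the paper's proof essentially step for step. Both compare $A_j$ and $A_{j+1}$ with averages of the single replacement $h_j$ via Corollary~\ref{cor:irc-weighted} on $\widehat B_j$, obtain $H_{j+1}\le H_j$ from the weak subharmonicity of $b(z)|\mathrm dh_j|^2(y)$ on $\widehat B_j\times\setR^3$ and Theorem~\ref{thm:gbl-averages}(ii), and get the limit from Theorem~\ref{thm:gbl-averages}(iii) together with $b\to1$ uniformly on the shrinking supports.

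Two small remarks. The density step you flag can be avoided by slicing: for each fixed $z$, test the Bochner inequality with $\varphi(\cdot,z)$; for each fixed $y$, integrate by parts in $z$; then combine via Fubini and the tensorized gradient pairing. In the limit you should, as the paper does, apply (iii) to the zero extension of $\mathbf 1_{B_{2R}(o)}|\mathrm du|^2$, since $|\mathrm du|^2$ is only defined on $\Omega$.
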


\begin{proof}
Fix $j\ge0$. We compare both $A_j$ and $A_{j+1}$ with averages
of the same harmonic replacement $h_j$. For $k=j,j+1$, set
\[
 H_{j,k}:=\left(\int_Z b(z)|\mathrm d h_j|^2(y)
                         \di\sigma_k(y,z)\right)^{1/2}.
\]
On the supports of these measures, $1\le b\le C$. Moreover,
$B_{s_k}(x)\subset\widehat B_j$, and the ratios $4s_j/s_k$ are
$4$ and $2^{m+2}$, respectively. Thus relative volume comparison and
\eqref{eq:gbl-projection} give
\[
 (\pi_X)_\#\sigma_k
 \le\frac{C\,\mathbf1_{\widehat B_j}}{\meas(\widehat B_j)}\,\meas,
 \qquad k=j,j+1.
\]
In particular, $A_k$ and $H_{j,k}$ are finite. Applying
Corollary~\ref{cor:irc-weighted} on $\widehat B_j$, we obtain
\begin{equation}
 |A_k-H_{j,k}|\le C\,\delta_u(\widehat B_j),
 \qquad k=j,j+1.
 \label{eq:gbl-two-scale-comparison}
\end{equation}

We claim that $W_j(y,z):=b(z)|\mathrm d h_j|^2(y)$ is weakly
subharmonic in $U_j:=\widehat B_j\times\setR^3$. Indeed,
Proposition~\ref{prop:irc-exact} gives
\[
 |\mathrm d h_j|^2\in W^{1,2}_{\loc}(\widehat B_j)
              \cap L^\infty_{\loc}(\widehat B_j),\qquad
 \Delta_X(|\mathrm d h_j|^2)\ge-2K_-|\mathrm d h_j|^2\,\meas,
\]
while $\Delta_{\setR^3}b=2K_-b$. Since the factors depend on
different variables, the product rule gives, for every
nonnegative $\varphi\in\Lip_c(U_j)$,
\[
 -\int_{U_j}\langle DW_j,D\varphi\rangle\di\nu
 \ge\int_{U_j}(-2K_-b+\Delta_{\setR^3}b)
                   |\mathrm d h_j|^2\varphi\di\nu=0.
\]
As $U_j$ contains $\overline{B^Z_{s_j}(P)}$,
Theorem~\ref{thm:gbl-averages}\textup{(ii)} applies to $W_j$ and yields
\[
 H_{j,j+1}\le H_{j,j}.
\]
Combining this with \eqref{eq:gbl-two-scale-comparison}, we find
\[
 \begin{aligned}
 A_{j+1}
 &\le H_{j,j+1}+C\,\delta_u(\widehat B_j)\\
 &\le H_{j,j}+C\,\delta_u(\widehat B_j)\\
 &\le A_j+2C\,\delta_u(\widehat B_j).
 \end{aligned}
\]
This gives \eqref{eq:gbl-recursion} after enlarging $C$.

Finally, suppose that $x$ is a Lebesgue point of $|\mathrm du|^2$.
Apply Theorem~\ref{thm:gbl-averages}\textup{(iii)} to the zero
extension of $\left.|\mathrm du|^2\right|_{B_{2R}(o)}$. Since all projected supports
lie in $B_{2R}(o)$, this gives
\[
 \int_Z |\mathrm du|^2(y)\di\sigma_j(y,z)\to |\mathrm du|^2(x).
\]
The localization in \textup{(i)} also implies
$\|b-1\|_{L^\infty(\sigma_j)}\to0$. Hence
\[
 \begin{aligned}
 0\le A_j^2-\int_Z |\mathrm du|^2(y)\di\sigma_j(y,z)
 &\le\|b-1\|_{L^\infty(\sigma_j)}
        \int_Z |\mathrm du|^2(y)\di\sigma_j(y,z) \to0.
 \end{aligned}
\]
Taking square roots yields \eqref{eq:gbl-recursion-limit}.
\end{proof}

Since $4s_j=2^{-4-mj}R$, the balls $\widehat B_j$ occur among
the dyadic balls defining the replacement potential. In particular,
\[
 \sum_{j=0}^{\infty}\delta_u(\widehat B_j)\le\mathcal P_Ru(x).
\]
We shall combine this bound with \eqref{eq:gbl-recursion} and an
estimate for $A_0$ in the next subsection.

\subsection{Proof of the replacement-potential estimate}
\label{sec:proof-main-estimate}

We complete the proof of Theorem~\ref{thm:replacement-potential}.
The first step is to estimate the initial average $A_0$ in terms
of the energy and, when finite, the oscillation of $u$.

\begin{lemma}
\label{lem:main-initialization}
With the notation of Corollary~\ref{cor:gbl-recursion}, there is
a constant $C=C(N,K_-R^2)$ such that
\begin{equation}
 A_0\le C\,\mathcal E_{2R}(u;o).
 \label{eq:main-energy-initialization}
\end{equation}
If $O_x:=\operatorname*{ess\,osc}_{B_R(x)}u<\infty$, then
\begin{equation}
 \mathcal E_{R/2}(u;x)
 \le C\left[\frac{O_x}{R}+\delta_u(B_R(x))\right].
 \label{eq:main-local-energy-initialization}
\end{equation}
and
\begin{equation}
 A_0\le C\left[\frac{O_x}{R}+\delta_u(B_R(x))\right],
 \label{eq:main-osc-initialization}
\end{equation}
\end{lemma}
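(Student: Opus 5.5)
The plan is to bound $A_0$ directly from the projected-density bound \eqref{eq:gbl-projection} with $j=0$. For the oscillation version, first prove \eqref{eq:main-local-energy-initialization} by comparing $u$ with the harmonic replacement $h=h_{B_R(x)}$, and then deduce \eqref{eq:main-osc-initialization} by the same projection argument.

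\emph{Energy initialization.} On the support of $\sigma_0$ we have $|z|<s_0=R/64$, so $1\le b\le \cosh(\sqrt{2K_-}R/64)\le C(K_-R^2)$. By \eqref{eq:gbl-projection},
\[
 A_0^2\le C\int_X|\mathrm du|^2\,\di(\pi_X)_\#\sigma_0
 \le\frac{C}{\meas(B_{s_0}(x))}\int_{B_{s_0}(x)}|\mathrm du|^2\di\meas.
\]
Since $x\in B_R(o)$, we have $B_{s_0}(x)\subset B_{2R}(o)\subset B_{3R}(x)$. The volume comparison \eqref{eq:irc-volume}, with ratio $s_0/(3R)=1/192$, gives $\meas(B_{2R}(o))\le C\meas(B_{s_0}(x))$. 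Together with $|\mathrm du|^2\le(n+2)e_2[u]^2$ from \eqref{eq:irc-spherical-trace}, this yields $A_0^2\le C\,\mathcal E_{2R}(u;o)^2$, which is \eqref{eq:main-energy-initialization}.

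\emph{Local energy under finite oscillation.} Let $B=B_R(x)$, which is admissible, and let $h=h_B$. By \eqref{eq:irc-trace-gap} and the triangle inequality in $L^2(B_{R/2}(x))$,
\[
 \Bigl(\int_{B_{R/2}(x)}e_2[u]^2\Bigr)^{1/2}\le\Bigl(\int_{B_{R/2}(x)}e_2[h]^2\Bigr)^{1/2}+C\,G_u(B)^{1/2}.
\]
We divide by $\meas(B_{R/2}(x))^{1/2}$ and use doubling to compare with $\meas(B)$, so the second term becomes $C\delta_u(B)$. For the first term, \eqref{eq:irc-spherical-trace} gives $e_2[h]^2\le\frac{n}{n+2}|\mathrm dh|^2$. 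The interior estimate \eqref{eq:irc-exact-gradient} then bounds $|\mathrm dh|$ on $B_{R/2}(x)$ by $\frac CR\bigl(\fint_B\dist_Y(h,y)^2\bigr)^{1/2}$ for any $y\in Y$.

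To estimate this quantity, choose $y=u(x_0)$ for a point $x_0$ such that $\dist_Y(u(\cdot),u(x_0))\le O_x$ a.e. on $B$; such a point exists by Fubini applied to the product essential supremum. Then $\dist_Y(h,y)\le\dist_Y(u,h)+O_x$. Proposition~\ref{prop:irc-distance-gap} gives $\bigl(\fint_Bw^2\bigr)^{1/2}\le CR\,\delta_u(B)$ for $w=\dist_Y(u,h)$. Combining these,
\[
 \frac CR\Bigl(\fint_B\dist_Y(h,y)^2\Bigr)^{1/2}\le C\bigl[O_x/R+\delta_u(B)\bigr],
\]
which is \eqref{eq:main-local-energy-initialization}.

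\emph{Oscillation initialization.} Repeat the projection argument with $B_{s_0}(x)\subset B_{R/2}(x)$ in place of $B_{2R}(o)$. Volume comparison with ratio $1/32$ gives $A_0\le C\,\mathcal E_{R/2}(u;x)$, and combining with \eqref{eq:main-local-energy-initialization} gives \eqref{eq:main-osc-initialization}.

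The only slightly delicate step is the choice of $y$ in the gradient estimate. One must ensure that the essential oscillation controls $\dist_Y(u,y)$ a.e. for a single fixed $y$. Fubini on the null set in $B\times B$ provides such an $x_0$. Alternatively, for any $y_0$ in the essential image, the inequality $\dist_Y(u,y_0)\le O_x$ holds a.e. by the same argument. The remaining steps are direct applications of the cited estimates together with doubling, with all constants depending only on $N$ and $K_-R^2$.
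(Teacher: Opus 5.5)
Your proof is correct. The energy initialization and the final projection step with $B_{s_0}(x)\subset B_{R/2}(x)$ are the same as in the paper. The two arguments differ only in how you bound $\fint_{B_R(x)}\dist_Y(h,y)^2\di\meas$ before invoking \eqref{eq:irc-exact-gradient}.

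The paper proves a maximum principle for the harmonic replacement:
\begin{itemize}
\item it projects $h$ onto the closed convex ball $\overline B_{O_x}(y_0)$, which contains $u$ almost everywhere;
\item the projection does not increase energy or the distance to $u$;
\item \eqref{eq:irc-wzero-domination} keeps the projected map in $\ks^{1,2}_u(B_R(x),Y)$;
\item uniqueness of the minimizer then forces $\dist_Y(h,y_0)\le O_x$ almost everywhere.
\end{itemize}
This yields the gap-free bound $\operatorname*{ess\,sup}_{B_{R/2}(x)}|\mathrm dh|\le CO_x/R$.

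You instead use $\dist_Y(h,y)\le w+O_x$ with $w=\dist_Y(u,h)$, together with the Poincar\'e bound $\bigl(\fint_B w^2\di\meas\bigr)^{1/2}\le CR\,\delta_u(B)$ from Proposition~\ref{prop:irc-distance-gap}. This gives the weaker bound $\operatorname*{ess\,sup}_{B_{R/2}(x)}|\mathrm dh|\le C\bigl[O_x/R+\delta_u(B)\bigr]$. The extra $\delta_u(B)$ is harmless, because it already appears on the right-hand side of \eqref{eq:main-local-energy-initialization}. Your choice of a single base point $y=u(x_0)$ via Fubini on the product essential supremum is also valid.

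Your route is shorter and needs neither the convex projection, nor the domination property, nor uniqueness of the minimizer. The paper's route gives a stronger structural fact: the harmonic replacement stays inside any closed ball containing the essential range of $u$, so its interior gradient bound depends on the oscillation alone.
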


\begin{proof}
Recall that $s_0=R/64$, and that $b\le C$ on
$\operatorname{supp}\sigma_0$ because $|z_1|\le s_0$ there. By
\eqref{eq:gbl-projection} and
$|\mathrm du|^2\le(n+2)e_2[u]^2$,
\begin{equation}
 A_0^2\le C\fint_{B_{s_0}(x)}|\mathrm du|^2\di\meas
        \le C(n+2)\fint_{B_{s_0}(x)}e_2[u]^2\di\meas.
 \label{eq:main-A0-local-energy}
\end{equation}
The inclusions
\[
 B_{s_0}(x)\subset B_{2R}(o)\subset B_{3R}(x),
 \qquad s_0=R/64,
\]
together with Bishop--Gromov comparison, give
$\meas(B_{2R}(o))\le C\,\meas(B_{s_0}(x))$; combined with
\eqref{eq:main-A0-local-energy}, this yields
\eqref{eq:main-energy-initialization} after taking square roots.
The same argument, with $B_{R/2}(x)$ in place of $B_{2R}(o)$, yields
\[
 A_0\le C\,\mathcal E_{R/2}(u;x).
\]
It therefore suffices to prove
\eqref{eq:main-local-energy-initialization} under the assumption
$O_x<\infty$.

Let $h=h_{B_R(x)}$ and choose $y_0$ in the closed essential range
of $u|_{B_R(x)}$. Then $\dist_Y(u,y_0)\le O_x$ almost everywhere
on $B_R(x)$. Let
\(\mathcal C:=\overline B_{O_x}(y_0)\), which is a nonempty closed
convex subset of \(Y\), and let \(P_{\mathcal C}\) be the
nonexpansive projection onto \(\mathcal C\). The map
\(\widetilde h:=P_{\mathcal C}h\) belongs to
\(\ks^{1,2}(B_R(x),Y)\), satisfies
\(E(\widetilde h;B_R(x))\le E(h;B_R(x))\), and
\[
 \dist_Y(\widetilde h,u)\le\dist_Y(h,u)
 \qquad\meas\text{-a.e. on }B_R(x).
\]
Since \(h\in\ks^{1,2}_u(B_R(x),Y)\), we have
\(\dist_Y(h,u)\in W^{1,2}_0(B_R(x))\).
By \eqref{eq:irc-wzero-domination},
\(\dist_Y(\widetilde h,u)\in W^{1,2}_0(B_R(x))\) as well.
Hence \(\widetilde h\in\ks^{1,2}_u(B_R(x),Y)\), and uniqueness
of the harmonic replacement gives \(\widetilde h=h\) almost
everywhere. In particular,
\[
 \dist_Y(h,y_0)\le O_x
 \qquad\meas\text{-a.e. on }B_R(x).
\]
The gradient estimate \eqref{eq:irc-exact-gradient} consequently
implies
\begin{equation}
 \operatorname*{ess\,sup}_{B_{R/2}(x)}|\mathrm dh|
 \le C\frac{O_x}{R}.
 \label{eq:main-h-gradient}
\end{equation}
Since $(n+2)e_2[h]^2\le n |\mathrm dh|^2$, it follows that
\begin{equation}
 \mathcal E_{R/2}(h;x)\le C\frac{O_x}{R}.
 \label{eq:main-h-energy}
\end{equation}
The triangle inequality in $L^2(B_{R/2}(x))$ and
\eqref{eq:irc-trace-gap} now give
\[
 \begin{aligned}
 \mathcal E_{R/2}(u;x)
 &\le\mathcal E_{R/2}(h;x)
   +\sqrt{n+2}\left(\fint_{B_{R/2}(x)}
           |e_2[u]-e_2[h]|^2\di\meas\right)^{1/2}\\
 &\le C\frac{O_x}{R}
   +2\left(\frac{G_u(B_R(x))}{\meas(B_{R/2}(x))}\right)^{1/2}\\
 &\le C\left[\frac{O_x}{R}+\delta_u(B_R(x))\right],
 \end{aligned}
\]
where the last inequality follows from relative volume comparison
applied to $B_{R/2}(x)\subset B_R(x)$.
This proves \eqref{eq:main-local-energy-initialization}, and the
preceding bound for $A_0$ gives \eqref{eq:main-osc-initialization}.
\end{proof}

\begin{proof}[Proof of Theorem~\ref{thm:replacement-potential}]
Fix a Lebesgue point $x\in B_R(o)$ of $|\mathrm du|^2$. Such points form
a set of full measure, since $e_2[u]\in L^2(\Omega)$ and
$|\mathrm du|^2\le(n+2)e_2[u]^2$.
The pointwise bounds are immediate if $\mathcal P_Ru(x)=\infty$,
so assume that $\mathcal P_Ru(x)<\infty$.

Iterating \eqref{eq:gbl-recursion}, we obtain
\begin{equation}
 A_\ell\le A_0+C\sum_{j=0}^{\ell-1}\delta_u(\widehat B_j),
 \qquad \ell\ge1.
 \label{eq:main-finite-iteration}
\end{equation}
As observed in the preceding subsection, $4s_j=2^{-4-mj}R$,
and hence
\begin{equation}
 \sum_{j=0}^{\infty}\delta_u(\widehat B_j)
 =\sum_{j=0}^{\infty}
       \delta_u\bigl(B_{2^{-(4+mj)}R}(x)\bigr)
 \le\mathcal P_Ru(x).
 \label{eq:main-subsequence-potential}
\end{equation}
Moreover, Corollary~\ref{cor:gbl-recursion} gives
\begin{equation}
 A_\ell\to |\mathrm du|(x)
 \qquad\text{as }\ell\to\infty.
 \label{eq:main-A-limit}
\end{equation}
Letting $\ell\to\infty$ in \eqref{eq:main-finite-iteration}, we
deduce that
\[
 |\mathrm du|(x)\le A_0+C\mathcal P_Ru(x).
\]
Together with \eqref{eq:main-energy-initialization}, this yields
\begin{equation}
 |\mathrm du|(x)\le C\bigl[\mathcal E_{2R}(u;o)+\mathcal P_Ru(x)\bigr].
 \label{eq:main-pointwise-energy}
\end{equation}

For the oscillation bound, assume that
$\operatorname*{ess\,osc}_{B_{2R}(o)}u<\infty$. Since
$B_R(x)\subset B_{2R}(o)$,
\begin{equation}
 O_x\le\operatorname*{ess\,osc}_{B_{2R}(o)}u.
 \label{eq:main-local-global-osc}
\end{equation}
The initial term in the replacement potential also gives
\begin{equation}
 \delta_u(B_R(x))\le\mathcal P_Ru(x).
 \label{eq:main-initial-defect-potential}
\end{equation}
Using \eqref{eq:main-osc-initialization} in the same argument,
we obtain
\begin{equation}
 |\mathrm du|(x)\le C\left[
   \frac{\operatorname*{ess\,osc}_{B_{2R}(o)}u}{R}
   +\mathcal P_Ru(x)\right].
 \label{eq:main-pointwise-oscillation}
\end{equation}
Both pointwise estimates thus hold at every Lebesgue point of
$|\mathrm du|^2$ in $B_R(o)$.

Suppose now that
$M_R:=\sup_{x\in B_R(o)}\mathcal P_Ru(x)<\infty$.
By \eqref{eq:main-pointwise-energy},
\[
 |\mathrm du|\le C\bigl[\mathcal E_{2R}(u;o)+M_R\bigr]
 \qquad\meas\text{-a.e. on }B_R(o).
\]
Lemma~\ref{lem:irc-sobolev-lipschitz} provides a representative
$\widetilde u$ on $B_{R/2}(o)$ satisfying
\[
 \Lip\bigl(\widetilde u|_{B_{R/2}(o)}\bigr)
 \le C\bigl[\mathcal E_{2R}(u;o)+M_R\bigr].
\]
When the oscillation is finite, applying the same lemma to
\eqref{eq:main-pointwise-oscillation} gives the corresponding
Lipschitz bound. This completes the proof.
\end{proof}

The preceding proof also gives the following local form of the
oscillation estimate.

\begin{corollary}
\label{cor:main-local-form}
Let $u\in\ks^{1,2}(\Omega,Y)$, and let $z\in X$ and $r>0$ satisfy
\begin{equation}
 B_{4r}(z)\Subset\Omega,\qquad
 X\setminus B_{8r}(z)\neq\varnothing.
 \label{eq:main-local-setting}
\end{equation}
Assume that $z$ is a Lebesgue point of $|\mathrm du|^2$ and that
$\operatorname*{ess\,osc}_{B_r(z)}u<\infty$. Set
\[
 \mathcal P_r u(z)
 :=\sum_{k=0}^{\infty}\delta_u\bigl(B_{2^{-k}r}(z)\bigr).
\]
Then
\begin{equation}
 |\mathrm du|(z)\le C\left[
   \frac{\operatorname*{ess\,osc}_{B_r(z)}u}{r}
   +\mathcal P_r u(z)\right],
 \label{eq:main-local-pointwise}
\end{equation}
where $C=C(N,K_-r^2)$.
\end{corollary}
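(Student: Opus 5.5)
The plan is to rerun the proof of Theorem~\ref{thm:replacement-potential} with the roles $(o,R)$ replaced by $(z,r)$, and with the base point $x$ of the Green bands taken to be the centre $z$ itself. Only the oscillation branch of the argument is used, so the only condition needed is that the balls involved stay inside $\Omega$.

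First I will check the geometric setup. All constants in Sections~\ref{sec:preliminaries}--\ref{sec:proof-replacement} depend only on $N$ and $K_-R^2$. They enter through three ingredients:
\begin{itemize}
\item Bishop--Gromov comparison on radii up to a fixed multiple of $R$;
\item the Poincar\'e inequality \eqref{eq:irc-poincare}, which uses a point at distance $2r'$ from the centre of an admissible ball $B_{r'}$;
\item the harmonic-replacement estimates on balls contained in the domain.
\end{itemize}
With centre $z$ and scale $r$, the balls we need are $B_r(z)$, the balls $\widehat B_j=B_{4s_j}(z)\subset B_{r/16}(z)$, and the product balls around $P=(z,0)$ of radius at most $s_0=r/64$. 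All of these lie in $B_r(z)\Subset\Omega$. The assumption $X\setminus B_{8r}(z)\neq\varnothing$ plays the role of $X\setminus B_{16R}(o)\ne\varnothing$. Indeed, it gives a point at distance exactly $2r'$ from any centre $y$ with $B_{r'}(y)\subset B_r(z)$: by connectedness and geodesicity, $\dist(y,\cdot)$ takes every value in $[0,7r]$, and $2r'\le 2r$. So Proposition~\ref{prop:irc-scalar}, Proposition~\ref{prop:irc-exact}, Proposition~\ref{prop:irc-operator-gap} and Theorem~\ref{thm:gbl-averages} (built at $x=z$ with scale $r$) all hold with $C=C(N,K_-r^2)$. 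I will treat Theorem~\ref{thm:gbl-averages} as valid for $x=z$ at this scale, since its construction uses only source geometry inside $B^Z_{s_0}(P)$.

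Next I apply Corollary~\ref{cor:gbl-recursion} at $z$. This gives $A_{j+1}\le A_j+C\delta_u(\widehat B_j)$. Since $z$ is a Lebesgue point of $|\mathrm du|^2$, we also have $A_j\to|\mathrm du|(z)$. The radii $4s_j=2^{-4-mj}r$ are dyadic fractions of $r$, so
\[
\sum_j\delta_u(\widehat B_j)\le\mathcal P_ru(z).
\]
For the initial term, I use the oscillation half of Lemma~\ref{lem:main-initialization} with $R$ replaced by $r$ and $x=z$. The harmonic replacement $h_{B_r(z)}$ is projected onto $\overline B_{O}(y_0)$, where $O:=\operatorname*{ess\,osc}_{B_r(z)}u$. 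Uniqueness gives $\dist_Y(h,y_0)\le O$, and the gradient estimate \eqref{eq:irc-exact-gradient} on $B_{r/2}(z)$ then controls $h$. Combining this with \eqref{eq:irc-trace-gap} and \eqref{eq:main-A0-local-energy} yields
\[
A_0\le C\bigl[O/r+\delta_u(B_r(z))\bigr].
\]
Telescoping and letting $j\to\infty$, and noting that $\delta_u(B_r(z))$ is the $k=0$ term of $\mathcal P_ru(z)$, gives \eqref{eq:main-local-pointwise}.

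The main obstacle is the bookkeeping check that nothing in the earlier proof used the ambient constraint $B_r(x)\subset B_{2R}(o)$ beyond containment in $\Omega$ and the non-degeneracy point. The likely trouble spots are:
\begin{itemize}
\item the Poincar\'e argument, which needs $B_{3r'}$ inside a region where volume comparison holds; this is fine, because comparison is global;
\item the comparison $\meas(B_{s_0})\gtrsim\meas(B_{r/2})$, which holds with constants depending on the fixed ratio $32$ and on $K_-r^2$.
\end{itemize}
I will go through each cited estimate and confirm that its constants depend only on $N$ and $K_-r^2$.
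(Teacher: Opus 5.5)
Your proposal is correct and is essentially the paper's argument. The paper also reruns the proof of Theorem~\ref{thm:replacement-potential} at the base point $z$, but at scale $R'=r/2$, so that $8R'=4r$ and $16R'=8r$ turn \eqref{eq:main-local-setting} into exactly \eqref{eq:intro-local-setting} and $B_{2R'}(z)=B_r(z)$; it then absorbs the factor $2$ from $1/R'$ and uses $\mathcal P_{r/2}u(z)\le\mathcal P_ru(z)$, which makes your bookkeeping audit unnecessary.

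Your audit at scale $r$ does go through, but your stated reason for Theorem~\ref{thm:gbl-averages} is inaccurate: $G_P$ is built from the global heat kernel (Gaussian bounds, volume comparison up to radius $5r$, the large-time tail), not from geometry inside $B^Z_{s_0}(P)$. The correct reason is that Section~\ref{sec:gbl-averages} never uses $\Omega$ or the nondegeneracy hypothesis, so it holds at every base point and scale with constants depending only on $N$ and $K_-r^2$.
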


\begin{proof}
Apply the proof of Theorem~\ref{thm:replacement-potential} with $o$
replaced by $z$ and $R$ replaced by $R':=r/2$. The geometric
hypotheses follow from \eqref{eq:main-local-setting}, since
$8R'=4r$ and $16R'=8r$. Since the pointwise estimate holds at every
Lebesgue point of $|\mathrm du|^2$, its oscillation form gives
\[
 |\mathrm du|(z)\le C\left[
   \frac{2\,\operatorname*{ess\,osc}_{B_r(z)}u}{r}
   +\mathcal P_{r/2}u(z)\right].
\]
Finally,
\[
 \mathcal P_{r/2}u(z)
 =\sum_{k=1}^{\infty}\delta_u\bigl(B_{2^{-k}r}(z)\bigr)
 \le\mathcal P_r u(z),
\]
and \eqref{eq:main-local-pointwise} follows after absorbing the
factor $2$ into $C$.
\end{proof}

\section{Proof of Theorem~\ref{thm:gbl-averages}}
\label{sec:gbl-averages}

This section proves Theorem~\ref{thm:gbl-averages} using the scalar
Sobolev calculus, Gaussian heat-kernel bounds, and interior gradient
estimates recalled in Section~\ref{subsec:ms-background}.
The construction is the band counterpart of the classical mean-value formulas
over Green level sets \cite{Watson91,Ni07}. 
The factor $\setR^3$ makes the large-time heat-kernel tail integrable.
The use of an auxiliary Euclidean factor in Green-function
analysis on $\RCD$ spaces also appears in
\cite[Remark~2.1 and Section~2.3]{BrueSemola20}.

We keep the notation of Theorem~\ref{thm:gbl-averages}: $x\in B_R(o)$
is fixed and
\[
 Z:=X\times\setR^3,
 \qquad \nu:=\meas\otimes\Leb^3,
 \qquad P:=(x,0),
 \qquad \pi_X(y,z):=y.
\]
Recall from Section~\ref{subsec:ms-background} that
$(Z,\dist_Z,\nu)$ is $\RCD(-K_-,N+3)$, with product heat kernel
\eqref{eq:irc-product-kernel}. Unless an additional parameter is
indicated, constants depend only on $N$ and $K_-R^2$.

\subsection{The product Green kernel}
\label{subsec:gbl-kernel}

We first record the volume growth of product balls and the monotonicity
of the associated gauge
\[
 V_x(r):=\nu(B_r^Z(P)),
 \qquad
 \Phi_x(r):=\frac{r^2}{V_x(r)},
\]
which will measure the size of the Green kernel.

\begin{lemma}
\label{lem:gbl-volume}
There are constants $c,C>0$, depending only on $N$ and $K_-R^2$, such
that
\begin{equation}
 c\,r^3\meas(B_r(x))\le V_x(r)\le C\,r^3\meas(B_r(x)),
 \qquad 0<r\le5R.
 \label{eq:gbl-volume-local}
\end{equation}
Moreover, for all $0<s\le r$,
\begin{equation}
 V_x(r)\ge(r/s)^3V_x(s),
 \qquad
 \Phi_x(r)\le(s/r)\Phi_x(s),
 \label{eq:gbl-gauge}
\end{equation}
and $\Phi_x(r)\to\infty$ as $r\downarrow0$.
\end{lemma}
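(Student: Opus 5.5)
The plan is to compare the product ball $B^Z_r(P)$ with the product of an $X$-ball and a Euclidean ball. For the local bound \eqref{eq:gbl-volume-local}, I use the inclusions
\[
 B_{r/\sqrt2}(x)\times B^{\setR^3}_{r/\sqrt2}(0)\subset B^Z_r(P)\subset B_r(x)\times B^{\setR^3}_r(0),
\]
which hold because $\dist_Z^2=\dist^2+|z|^2$. Fubini then gives
\[
 \omega_3 2^{-3/2}r^3\meas(B_{r/\sqrt2}(x))\le V_x(r)\le\omega_3r^3\meas(B_r(x)).
\]
Next I apply the volume comparison \eqref{eq:irc-volume} with $s=r/\sqrt2$. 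Since \eqref{eq:irc-volume} is stated only for radii up to $4R$, I will appeal to Bishop--Gromov on the range $r\le5R$, where the constant depends only on $N$ and $K_-R^2$; the extension from $4R$ to $5R$ merely changes constants. This yields $\meas(B_{r/\sqrt2}(x))\ge c\,\meas(B_r(x))$ and hence \eqref{eq:gbl-volume-local}.

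For the monotonicity \eqref{eq:gbl-gauge}, the $(r/s)^3$ growth should come purely from the Euclidean factor and must hold for all $0<s\le r$, with no curvature constant. Writing
\[
 V_x(r)=\int_{B_r(x)}\omega_3\bigl(r^2-\dist(x,y)^2\bigr)^{3/2}\di\meas(y),
\]
I compare it with
\[
 V_x(s)=\int_{B_s(x)}\omega_3\bigl(s^2-\dist(x,y)^2\bigr)^{3/2}\di\meas(y).
\]
For $\lambda=r/s\ge1$ and $t=\dist(x,y)<s$, I have $r^2-t^2\ge\lambda^2(s^2-t^2)$, because $r^2-t^2-\lambda^2s^2+\lambda^2t^2=(\lambda^2-1)t^2\ge0$. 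Hence $(r^2-t^2)^{3/2}\ge\lambda^3(s^2-t^2)^{3/2}$ pointwise on $B_s(x)$. Since $B_s(x)\subset B_r(x)$ and the integrand is nonnegative, integrating gives $V_x(r)\ge(r/s)^3V_x(s)$. Then
\[
 \Phi_x(r)=\frac{r^2}{V_x(r)}\le\frac{r^2 s^3}{r^3V_x(s)}=\frac sr\Phi_x(s).
\]
This step is elementary, and its key feature is the absence of any curvature constant.

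Finally, I show $\Phi_x(r)\to\infty$. By the upper bound, $\Phi_x(r)\ge c\,r^{-1}/\meas(B_r(x))$, and $\meas(B_r(x))\le\meas(B_1(x))<\infty$ for $r\le1$ because $\meas$ is Radon and $X$ is proper. Therefore $\Phi_x(r)\ge c'/r\to\infty$. More directly, \eqref{eq:gbl-gauge} with $r$ fixed gives $\Phi_x(s)\ge(r/s)\Phi_x(r)$, and $\Phi_x(r)>0$ since $V_x(r)<\infty$ by properness. I expect no real obstacle here; the only care needed is in tracking the radius range $r\le5R$ in the local comparison.
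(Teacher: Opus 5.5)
Your proof is correct and follows essentially the same route as the paper: you use the fiber-integral formula for $V_x$, integrate the pointwise inequality $(r^2-t^2)^{3/2}\ge(r/s)^3(s^2-t^2)^{3/2}$ over $B_s(x)$, and apply Bishop--Gromov doubling (for $r\le5R$) to get the local two-sided bound. The only cosmetic difference is that you get the lower bound from the inclusion $B_{r/\sqrt2}(x)\times B^{\setR^3}_{r/\sqrt2}(0)\subset B^Z_r(P)$, whereas the paper restricts the fiber integral to $B_{r/2}(x)$; both reduce to doubling at a fixed radius ratio.
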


\begin{proof}
Writing $\omega_3:=\Leb^3(B_1(0))$ and integrating over the Euclidean
fibers, we have
\[
 V_x(r)=\omega_3\int_{B_r(x)}
 \bigl(r^2-\dist(x,y)^2\bigr)^{3/2}\di\meas(y).
\]
The upper bound in \eqref{eq:gbl-volume-local} holds for all $r>0$.
Restricting the integral to $B_{r/2}(x)$ and applying Bishop--Gromov
doubling for $r\le5R$ gives the lower bound. For $\dist(x,y)<s\le r$,
\[
 \bigl(r^2-\dist(x,y)^2\bigr)^{3/2}
 \ge(r/s)^3\bigl(s^2-\dist(x,y)^2\bigr)^{3/2};
\]
integrating over $B_s(x)$ gives the first inequality in
\eqref{eq:gbl-gauge}, and the second follows from the definition of
$\Phi_x$. Finally, taking $r=R$ in \eqref{eq:gbl-gauge} gives
$\Phi_x(s)\ge(R/s)\Phi_x(R)\to\infty$ as $s\downarrow0$.
\end{proof}

We next introduce the Green kernel of the product space and collect
its properties. By \eqref{eq:irc-product-kernel},
\[
 p_t^Z(P,(y,z))=p_t^X(x,y)(4\pi t)^{-3/2}e^{-|z|^2/(4t)}.
\]
Define
\begin{equation}
 G_P(\zeta):=\int_0^\infty p_t^Z(P,\zeta)\di t
 \quad(\zeta\ne P),
 \qquad G_P(P):=+\infty,
 \label{eq:gbl-green}
\end{equation}
and, for $\eps>0$, set
$G_\eps:=\int_\eps^\infty p_t^Z(P,\cdot)\di t$.

\begin{proposition}
\label{prop:gbl-green}
The function $G_P$ is finite, locally Lipschitz, and weakly harmonic
on $Z\setminus\{P\}$. It tends to $+\infty$ at $P$ and to zero
as $\dist_Z(P,\cdot)\to\infty$; in particular, $\{G_P\ge a\}$ is
compact for every $a>0$. Writing $r=\dist_Z(P,\zeta)$, we have
\begin{equation}
 \begin{aligned}
 c\Phi_x(r)&\le G_P(\zeta)\le C\Phi_x(r)
 &&(0<r\le3R),\\
 |DG_P|(\zeta)&\le C\frac{\Phi_x(r)}r
 &&(0<r\le2R),
 \end{aligned}
 \label{eq:gbl-green-bounds}
\end{equation}
where the gradient bound holds $\nu$-almost everywhere. Moreover,
$G_\eps\in W^{1,2}_{\loc}(Z)$ satisfies
\begin{equation}
 -\Delta_ZG_\eps=p_\eps^Z(P,\cdot)
 \quad\text{weakly on }Z,
 \label{eq:gbl-regularized}
\end{equation}
and, as $\eps\downarrow0$, $G_\eps\to G_P$ locally uniformly on
$Z\setminus\{P\}$ and strongly in $W^{1,2}_{\loc}(Z\setminus\{P\})$.
\end{proposition}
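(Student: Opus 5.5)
The plan is to derive everything from the integral $G_P=\int_0^\infty p^Z_t(P,\cdot)\di t$, using the product kernel \eqref{eq:irc-product-kernel} and the Gaussian bounds \eqref{eq:irc-gaussian}. I treat short and long times separately.

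\textbf{Short times.} For $t\le c_0R^2$, the Gaussian bounds on $X$ combine with the explicit Euclidean factor. Together with \eqref{eq:gbl-volume-local}, this gives
\[
 p_t^Z(P,\zeta)\simeq \frac{1}{V_x(\sqrt t)}\exp\bigl(-c\,r^2/t\bigr),
 \qquad r=\dist_Z(P,\zeta),
\]
with different constants in the upper and lower bounds.

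\textbf{Long times.} For $t\ge R^2$ I use only the semigroup property, the $L^\infty$ contractivity of $P_t$, and the uniform bound $p^X_t\le C/\meas(B_R(x))$ for $t\ge R^2$. The Euclidean factor then makes the tail of size $t^{-3/2}$, which is integrable and contributes at most $C R^{-1}/\meas(B_R(x))\simeq C\Phi_x(R)$.

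\textbf{Two-sided bound on $G_P$ for $r\le3R$.}
\begin{itemize}
\item Lower bound: integrate over $t\in[r^2,2r^2]$. This gives $G_P\ge c\,r^2/V_x(r)=c\,\Phi_x(r)$.
\item Upper bound: split at $t=r^2$. Below $r^2$, the Gaussian factor kills the singularity. Above $r^2$, write $\int_{r^2}^{R^2}\di t/V_x(\sqrt t)$, substitute $t=s^2$, and use \eqref{eq:gbl-gauge} in the form $V_x(s)\ge(s/r)^3V_x(r)$. The integrand is then $\lesssim \Phi_x(r)\,r\,s^{-2}$, which integrates to $C\Phi_x(r)$. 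The long-time tail was bounded above.
\end{itemize}
The same computation shows that $G_P$ is finite off $P$. Since $\Phi_x(r)\to\infty$ by Lemma~\ref{lem:gbl-volume}, $G_P\to\infty$ at $P$. For decay at infinity, the Gaussian factor $e^{-r^2/Ct}$, the tail estimate and dominated convergence give $G_P(\zeta)\to0$ as $r\to\infty$. The superlevel sets $\{G_P\ge a\}$ are then closed and bounded, hence compact by properness of $Z$.

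\textbf{Regularized kernel and harmonicity.} For $\eps>0$, $G_\eps=\int_\eps^\infty P^Z_t(p^Z_{\eps'}\ldots)$; more directly, $G_\eps(\zeta)=\int_\eps^\infty p^Z_t(P,\zeta)\di t$. The heat equation $\partial_tp_t=\Delta p_t$ gives $\Delta_ZG_\eps=\lim_{T\to\infty}\bigl(p^Z_T-p^Z_\eps\bigr)=-p^Z_\eps(P,\cdot)$, because $p^Z_T\to0$ uniformly via the $T^{-3/2}$ factor. I would justify this weakly by testing against $\phi\in\Lip_c(Z)$ and using Fubini. Membership $G_\eps\in W^{1,2}_{\loc}$ follows from the standard $L^2$ gradient bounds $\|\,|Dp_t|\,\|_{L^2}^2\le C\,t^{-1}p_{2t}(P,P)$ from the semigroup, integrated in $t\ge\eps$.

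As $\eps\downarrow0$, the difference $G_P-G_\eps=\int_0^\eps p^Z_t\di t$ tends to $0$ uniformly on $\{r\ge\rho\}$ by the Gaussian upper bound. On $Z\setminus\{P\}$, the right-hand side $p^Z_\eps(P,\cdot)$ of \eqref{eq:gbl-regularized} tends to $0$ locally uniformly. The Caccioppoli estimate \eqref{eq:irc-caccioppoli}, applied to $G_\eps-G_{\eps'}$, then gives Cauchy convergence in $W^{1,2}_{\loc}(Z\setminus\{P\})$. The limit $G_P$ is therefore weakly harmonic there.

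\textbf{Gradient bound (main obstacle).} Local Lipschitz regularity and the bound $|DG_P|\le C\Phi_x(r)/r$ come from the interior gradient estimate for scalar harmonic functions on $Z$ (the scalar case of Proposition~\ref{prop:irc-exact}). Apply it on the ball $B^Z_{r/2}(\zeta)$, where $G_P$ is harmonic and, by the two-sided bound and \eqref{eq:gbl-gauge}, satisfies $G_P\le C\Phi_x(r/2)\le C\Phi_x(r)$. This yields $|DG_P|\le (C/r)\sup G_P\le C\Phi_x(r)/r$.

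I expect the delicate part to be applying the interior estimate to admissible product balls in $Z$. This needs the constants to depend only on $N+3$ and $K_-R^2$, and it needs $\zeta$ with $r\le2R$ to keep $B^Z_{r/2}(\zeta)$ inside the region $r\le3R$ where the two-sided bound holds. Secondary technical care is needed for the weak formulation of $\Delta G_\eps$ and for $L^2$ integrability of $|DG_\eps|$ near $P$ once $\eps$ is fixed.
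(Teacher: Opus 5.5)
Your proposal is correct, and most of it coincides with the paper's proof: the short/long-time split, the lower bound from $t\in[r^2,2r^2]$, the bound on $[r^2,R^2]$ via $V_x(s)\ge(s/r)^3V_x(r)$, the $t^{-3/2}$ tail from the $\setR^3$ factor, and Caccioppoli to pass to the limit $\eps\downarrow0$. Two sub-steps differ.

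\emph{The regularized kernel.} For $G_\eps\in W^{1,2}_{\loc}(Z)$ and \eqref{eq:gbl-regularized}, the paper writes $G_{\eps,T}=\int_{\eps/2}^{T-\eps/2}P^Z_s\,p^Z_{\eps/2}(P,\cdot)\di s\in D(\Delta_Z)$. This gives $-\Delta_ZG_{\eps,T}=p^Z_\eps(P,\cdot)-p^Z_T(P,\cdot)$, and the paper then lets $T\to\infty$ using Caccioppoli and uniform convergence. You instead integrate an $L^2$ gradient bound for $p^Z_t(P,\cdot)$ in time and test the heat equation directly. This is equally valid, and it even gives $|DG_\eps|\in L^2(Z)$ outright.

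\emph{The gradient bound.} The paper applies the Cheng--Yau estimate of Hua--Kell--Xia, $|DG_P|\le C(1/r+\sqrt{K_-})\,G_P$. That estimate needs only positivity of $G_P$ and the upper bound for $G_P$ at the point itself. You apply the interior gradient estimate, i.e.\ the scalar case of Proposition~\ref{prop:irc-exact} on $Z$, which Section~\ref{subsec:ms-background} explicitly allows. This keeps the proof within tools already recorded in the paper. The price is that you need $\sup_{B^Z_{r/2}(\zeta)}G_P$, whence the restriction $r\le 2R$, which keeps this ball inside the region where the two-sided bound holds.

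There are three small corrections.
\begin{enumerate}
\item The gradient bound should read $\norm{|Dp^Z_t(P,\cdot)|}_{L^2}^2\le(\mathrm e t)^{-1}p^Z_t(P,P)$. This follows from $p^Z_t=P^Z_{t/2}p^Z_{t/2}$ and $\int|DP_sg|^2\di\nu\le\norm{g}_{L^2}^2/(2\mathrm e s)$. With $p^Z_{2t}(P,P)=\norm{p^Z_t(P,\cdot)}_{L^2}^2$ on the right, the inequality fails for large $t$ whenever the bottom of the spectrum of $X$ is positive, which can happen for $K<0$: the Rayleigh quotient of $p^Z_t$ is then bounded below by a positive constant. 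Since $p^Z_t(P,P)\le Ct^{-3/2}/\meas(B_R(x))$ for $t\ge R^2$, the corrected integrand is still $O(t^{-5/4})$, so your argument goes through.
\item The bound $G_P\le C\Phi_x(r/2)$ on $B^Z_{r/2}(\zeta)$ does follow from \eqref{eq:gbl-gauge}. However, $\Phi_x(r/2)\le C\Phi_x(r)$ comes from doubling of $V_x$; \eqref{eq:gbl-gauge} gives the reverse inequality. Doubling is used again to convert the essential supremum over $B^Z_{r/4}(\zeta)$ into the a.e.\ bound $|DG_P|(\zeta')\le C\Phi_x(r')/r'$ with $r'=\dist_Z(P,\zeta')$.
\item You should establish continuity of the pointwise-defined $G_P$ on $Z\setminus\{P\}$ before calling $\{G_P\ge a\}$ closed. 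Dominated convergence in $t$, as in the paper, suffices. Continuity is also what identifies $G_P$ everywhere with the locally Lipschitz representative produced by the gradient estimate.
\end{enumerate}
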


\begin{proof}
\medskip\noindent
\textbf{Pointwise bounds.}
Combining the Gaussian bounds \eqref{eq:irc-gaussian}, with the product formula
\eqref{eq:irc-product-kernel} and the volume comparison \eqref{eq:gbl-volume-local}, gives the two-sided
estimate
\begin{equation}
 \begin{aligned}
 \frac{c}{V_x(\sqrt t)}
 \exp\!\left(-\frac{\dist_Z(P,\zeta)^2}{ct}\right)
 \le p_t^Z(P,\zeta)
 \le\frac{C}{V_x(\sqrt t)}
 \exp\!\left(-\frac{\dist_Z(P,\zeta)^2}{Ct}\right),
 \qquad 0<t\le25R^2,
 \end{aligned}
 \label{eq:gbl-gaussian}
\end{equation}
where \(c,C>0\) depend only on \(N,K_-R^2\). 
For \(0<r=\dist_Z(P,\zeta)\le3R\), we have
\([r^2,2r^2]\subset(0,25R^2]\).
Integrating the lower bound in \eqref{eq:gbl-gaussian} over this interval and using
local doubling therefore gives
\[
 G_P(\zeta)\ge cr^2/V_x(r)=c\Phi_x(r).
\]

For $t\ge R^2$, the semigroup property, $L^\infty$-contraction,
and the Gaussian upper bound on $X$ at time $R^2$ give
\[
 \|p_t^X(x,\cdot)\|_\infty=\|P_{t-R^2}p_{R^2}^X(x,\cdot)\|_\infty\le M:=\|p_{R^2}^X(x,\cdot)\|_\infty\le\frac{C}{\meas(B_R(x))},
\]
and the product formula yields the tail estimate
\begin{equation}
 \sup_{\zeta\in Z}\int_T^\infty p_t^Z(P,\zeta)\di t
 \le CMT^{-1/2},\qquad T\ge R^2.
 \label{eq:gbl-tail}
\end{equation}
In particular, the tail from $R^2$ is bounded by
$C/[R\meas(B_R(x))]\le C\Phi_x(R)$, where the last inequality uses
\eqref{eq:gbl-volume-local}.

If $r\le R$, Bishop--Gromov comparison on \(Z\), whose dimension
bound is \(N+3\), gives
\[
 V_x(\sqrt t)\ge c\left(\frac{\sqrt t}{r}\right)^{N+3}V_x(r)
 \qquad(0<t\le r^2),
\]
and hence
\[
 \begin{aligned}
 \int_0^{r^2}p_t^Z(P,\zeta)\di t
 &\le\frac{C}{V_x(r)}\int_0^{r^2}
       \left(\frac r{\sqrt t}\right)^{N+3}e^{-r^2/(Ct)}\di t
 \le C\Phi_x(r),\\
 \int_{r^2}^{R^2}p_t^Z(P,\zeta)\di t
 &\le\frac{Cr^3}{V_x(r)}\int_{r^2}^{R^2}t^{-3/2}\di t
 \le C\Phi_x(r).
 \end{aligned}
\]
The remaining tail is at most $C\Phi_x(R)\le C\Phi_x(r)$.

For \(R\le r\le3R\), the same small-time calculation gives
$\int_0^{R^2}p_t^Z(P,\zeta)\di t\le C\Phi_x(R)$ uniformly in
\(\zeta\). Together with \eqref{eq:gbl-tail}, this shows that
\(G_P(\zeta)\le C\Phi_x(R)\), and local doubling yields the stronger estimate
\(G_P(\zeta)\le C\Phi_x(r)\). The estimate
\(G_P(\zeta)\le C\Phi_x(R)\) remains valid for all \(r\ge R\);
hence \(G_P\) is finite at every \(\zeta\ne P\).
Since \(\Phi_x(r)\to\infty\) as \(r\downarrow0\), the lower bound
also yields \(G_P(\zeta)\to+\infty\) as \(\zeta\to P\).

\medskip\noindent
\textbf{Continuity and decay at infinity.}
For continuity, let \(K\Subset Z\setminus\{P\}\) and put
\(
 a:=\dist_Z(P,K)>0.
\)
For every \(\zeta\in K\), \eqref{eq:gbl-gaussian} gives
\[
 p_t^Z(P,\zeta)
 \le\frac{C}{V_x(\sqrt t)}e^{-a^2/(Ct)},
 \qquad 0<t\le R^2,
\]
while the product formula and the \(L^\infty\)-bound used in
\eqref{eq:gbl-tail} yield
\[
 p_t^Z(P,\zeta)\le CMt^{-3/2},
 \qquad t>R^2.
\]
For \(0<t\le R^2\), Bishop--Gromov comparison on \(Z\) gives
\[
 \frac1{V_x(\sqrt t)}
 \le\frac{C}{V_x(R)}\left(\frac R{\sqrt t}\right)^{N+3}
 =C_0t^{-(N+3)/2}.
\]
The two preceding bounds are controlled by
\[
 g(t):=
 \begin{cases}
  C_0t^{-(N+3)/2}e^{-a^2/(Ct)}, & 0<t\le R^2,\\[2pt]
  CMt^{-3/2}, & t>R^2.
 \end{cases}
\]
Since \(p_t^Z(P,\zeta)\) is continuous
in \(\zeta\) for each fixed \(t>0\), dominated convergence shows that
\(G_P\) is continuous on \(K\). As \(K\) is arbitrary,
\(G_P\) is continuous on \(Z\setminus\{P\}\)
\cite[Chapter~1]{Rudin87}.

For every fixed \(T\ge R^2\), the Gaussian bound on \((0,T]\)
(with constants allowed to depend on \(T/R^2\), as recalled in
Section~\ref{subsec:ms-background}) and dominated convergence give
\[
 \int_0^T p_t^Z(P,\zeta)\di t\to0
 \quad\text{as }\dist_Z(P,\zeta)\to\infty.
\]
Indeed, for \(\dist_Z(P,\zeta)\ge R\), the Gaussian factor at
small times is bounded by \(e^{-R^2/(C_Tt)}\), which makes the
Gaussian upper bound integrable at \(t=0\); away from \(t=0\)
the heat kernel is uniformly bounded. By \eqref{eq:gbl-tail},
\[
 0\le\limsup_{\dist_Z(P,\zeta)\to\infty}G_P(\zeta)
 \le CMT^{-1/2}.
\]
Letting \(T\to\infty\) proves
\[
 G_P(\zeta)\to0
 \quad\text{as }\dist_Z(P,\zeta)\to\infty.
\]
For every $a>0$, the set $\{G_P\ge a\}$ is closed, by continuity away
from $P$ and the assigned value $G_P(P)=+\infty$, and bounded, by the
decay at infinity; it is therefore compact, since $Z$ is proper.

\medskip\noindent
\textbf{Regularization and harmonicity.}
For $T>\eps$, let
$G_{\eps,T}:=\int_\eps^T p_t^Z(P,\cdot)\di t$.
The function $f:=p_{\eps/2}^Z(P,\cdot)$ belongs to $L^2(Z)$, since
$\|f\|_2^2=p_\eps^Z(P,P)<\infty$. We use the semigroup identity
\[
 \begin{aligned}
 \int_a^b P_s^Zg\di s&\in D(\Delta_Z),\qquad
 -\Delta_Z\int_a^b P_s^Zg\di s
 =P_a^Zg-P_b^Zg,
 \qquad 0\le a<b,\quad g\in L^2(Z),
 \end{aligned}
\]
Hence 
\[
 G_{\eps,T}=\int_{\eps/2}^{T-\eps/2}P_s^Zf\di s
 \in D(\Delta_Z)
\]
and
\[
 -\Delta_ZG_{\eps,T}
 =P_{\eps/2}^Zf-P_{T-\eps/2}^Zf
 =p_\eps^Z(P,\cdot)-p_T^Z(P,\cdot).
\]
By \eqref{eq:gbl-tail}, $G_{\eps,T}\to G_\eps$ uniformly as
$T\to\infty$. Fix \(U\Subset U'\Subset Z\) and choose a
Lipschitz cutoff \(\chi\) satisfying \(0\le\chi\le1\), \(\chi=1\) on
\(U\), and \(\supp\chi\subset U'\). 
For \(T_1,T_2>\eps\),
\[
 \Delta_Z(G_{\eps,T_1}-G_{\eps,T_2})
 =p_{T_1}^Z(P,\cdot)-p_{T_2}^Z(P,\cdot).
\]
The Caccioppoli estimate \eqref{eq:irc-caccioppoli} gives
\[
 \begin{aligned}
 \int\chi^2|D(G_{\eps,T_1}-G_{\eps,T_2})|^2\di\nu
 &\le C\int\chi^2|p_{T_1}^Z(P,\cdot)-p_{T_2}^Z(P,\cdot)|
        |G_{\eps,T_1}-G_{\eps,T_2}|\di\nu\\
 &\quad+C\int|D\chi|^2(G_{\eps,T_1}-G_{\eps,T_2})^2\di\nu.
 \end{aligned}
\]
Both \(G_{\eps,T_1}-G_{\eps,T_2}\) and
\(p_{T_1}^Z(P,\cdot)-p_{T_2}^Z(P,\cdot)\) tend uniformly to zero as
$T_1,T_2\to\infty$. The right-hand side above therefore tends to zero,
so \((G_{\eps,T})_{T>\eps}\) is Cauchy in \(W^{1,2}(U)\). Its uniform
limit is \(G_\eps\); passing to the limit in the weak equation proves
\eqref{eq:gbl-regularized}.

Next, to study the limit as \(\eps\downarrow0\), take
\(U\Subset U'\Subset Z\setminus\{P\}\) and set
\(A:=\dist_Z(P,U')>0\).
Since \(G_P-G_\eps=\int_0^\eps p_t^Z(P,\cdot)\di t\), the same
estimates also give
\[
 0\le G_P(\zeta)-G_\eps(\zeta)
 \le C\int_0^\eps t^{-(N+3)/2}e^{-A^2/(Ct)}\di t
 \to 0
\]
uniformly on \(U'\). Hence both \(G_\eps-G_\eta\) and its Laplacian
\(p_\eta^Z(P,\cdot)-p_\eps^Z(P,\cdot)\) tend uniformly to zero on
\(U'\) as \(\eps,\eta\downarrow0\).

The same Caccioppoli estimate, with the fixed cutoff \(\chi\), therefore
shows that \((G_\eps)_{\eps>0}\) is Cauchy in \(W^{1,2}(U)\). Finally,
for \(\varphi\in\Lip_c(U)\), the weak equation gives
\[
 \int_Z\langle DG_\eps,D\varphi\rangle\di\nu
 =\int_Z p_\eps^Z(P,\cdot)\varphi\di\nu.
\]
Letting \(\eps\downarrow0\) yields
\[
 \int_Z\langle DG_P,D\varphi\rangle\di\nu=0.
\]
Thus \(G_P\) is weakly harmonic on \(U\). Since \(U\) is arbitrary,
\(G_P\) is harmonic on \(Z\setminus\{P\}\).

\medskip\noindent
\textbf{Local gradient bound.}
As \(G_P\) is positive and weakly
harmonic there, the Cheng--Yau estimate for positive harmonic
functions on \(\RCD\) spaces \cite[Theorem~1.2]{HuaKellXia16} applies.
For \(r=\dist_Z(P,\zeta)>0\), set \(\rho:=r/4\). The ball
\(B_{2\rho}^Z(\zeta)\) lies in \(Z\setminus\{P\}\), and therefore
\[
 \frac{|DG_P|(\zeta)}{G_P(\zeta)}
 \le C(N+3)\left(\frac1\rho+\sqrt{K_-}\right)
 \le C(N+3)\left(\frac1r+\sqrt{K_-}\right)
\]
for \(\nu\)-almost every \(\zeta\).
If \(0<r\le2R\), then
\[
 \frac1r+\sqrt{K_-}
 =\frac{1+\sqrt{K_-}\,r}{r}
 \le\frac{1+2\sqrt{K_-R^2}}r.
\]
Together with \(G_P(\zeta)\le C\Phi_x(r)\) from
\eqref{eq:gbl-green-bounds}, this gives
\[
 |DG_P|(\zeta)
 \le C(N,K_-R^2)\frac{G_P(\zeta)}r
 \le C\frac{\Phi_x(r)}r.
\]
In particular, \(|DG_P|\) is locally bounded on
\(Z\setminus\{P\}\). The local Sobolev-to-Lipschitz property therefore
gives a locally Lipschitz representative of \(G_P\) on
\(Z\setminus\{P\}\).
\end{proof}

\subsection{Two auxiliary lemmas}
\label{subsec:gbl-bands}
\begin{lemma}
\label{lem:gbl-flux}
For every $0<a<b<\infty$,
\begin{equation}
 \int_{\{a<G_P<b\}}|DG_P|^2\di\nu=b-a.
 \label{eq:gbl-flux}
\end{equation}
\end{lemma}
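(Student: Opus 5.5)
The flux identity expresses that $G_P$ is the Green function with unit pole: $-\Delta_Z G_P=\delta_P$. I would prove it by testing the regularized equation \eqref{eq:gbl-regularized} against a truncation of $G_\eps$ and then letting $\eps\downarrow0$.

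\textbf{Test function.} Fix $0<a<b$ and let
\[
 \phi_\eps:=\min\{\max\{G_\eps-a,0\},\,b-a\}
\]
be the clipping of $G_\eps$ to $[a,b]$, shifted to $[0,b-a]$. Then $\phi_\eps\in W^{1,2}_{\loc}(Z)$ by the chain rule, with $D\phi_\eps=\mathbf 1_{\{a<G_\eps<b\}}DG_\eps$, and $0\le\phi_\eps\le b-a$. Its support lies in $\{G_\eps\ge a\}\subset\{G_P\ge a\}$, which is compact by Proposition~\ref{prop:gbl-green}. Hence $\phi_\eps$ is bounded with compact support, and by approximation it is admissible in the weak form of \eqref{eq:gbl-regularized}. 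Testing gives
\[
 \int_{\{a<G_\eps<b\}}|DG_\eps|^2\di\nu=\int_Z p^Z_\eps(P,\cdot)\,\phi_\eps\di\nu .
\]

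\textbf{Right-hand side.} Since $G_P\ge G_\eps$ and $G_P-G_\eps\le\int_0^\eps p_t^Z(P,\cdot)\di t$, the sets $\{G_\eps\le b\}$ are bounded uniformly for small $\eps$. Moreover $G_\eps\to G_P$ uniformly away from $P$. On a small ball $B^Z_\rho(P)$, where $G_P>2b$, we also have $G_\eps\ge b$ for $\eps$ small, using $G_\eps\ge\int_\eps^{2\eps}p_t^Z\,\di t$ together with the lower Gaussian bound. Thus $\phi_\eps=b-a$ on $B^Z_\rho(P)$ for $\eps$ small, and $\phi_\eps$ is bounded by $b-a$ elsewhere. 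Since $p_\eps^Z(P,\cdot)\nu\rightharpoonup\delta_P$ with mass concentrating in $B^Z_\rho(P)$ (by the Gaussian upper bound and conservativity), the right-hand side tends to $b-a$.

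\textbf{Left-hand side.} The set $\{a\le G_P\le b\}$ is a compact subset of $Z\setminus\{P\}$. On a neighbourhood of it, $G_\eps\to G_P$ strongly in $W^{1,2}$ by Proposition~\ref{prop:gbl-green}, and $\{a<G_\eps<b\}$ lies in that neighbourhood for $\eps$ small. Write the left-hand side as $\int|D\min\{\max\{G_\eps,a\},b\}|^2\di\nu$; this is continuous under strong $W^{1,2}$ convergence combined with uniform convergence, because truncation is continuous in $W^{1,2}$. Hence it converges to $\int|D\min\{\max\{G_P,a\},b\}|^2\di\nu$.

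\textbf{Main obstacle: level sets.} The last integral equals $\int_{\{a\le G_P\le b\}}|DG_P|^2\di\nu$, and we need the same value with the open band $\{a<G_P<b\}$. By locality of the minimal weak upper gradient, $|DG_P|=0$ $\nu$-a.e.\ on every level set $\{G_P=c\}$, so the boundary levels carry no contribution and the identity follows.

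The delicate point is the left-hand limit: truncation is continuous but not uniformly so near level sets of positive measure, and locality is exactly what removes this issue. It also handles the case where $\nu(\{G_P=a\})>0$. If one wants to avoid the truncation-continuity argument, an alternative is to first prove the identity for almost every pair $a<b$, where the level sets are null, and then extend to all $a<b$ by monotone convergence in $a$ and $b$.
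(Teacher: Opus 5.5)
Your argument is correct in outline, but it follows a different route from the paper. The paper tests \eqref{eq:gbl-regularized} with the \emph{fixed} function $T_{a,b}(G_P)=(G_P-a)_+\wedge(b-a)$. This function is Lipschitz with compact support, is constant near $P$, and has its gradient in the compact set $\{a\le G_P\le b\}\subset Z\setminus\{P\}$. The left-hand side $\int_Z\langle DG_\eps,DT_{a,b}(G_P)\rangle\di\nu$ is then linear in $G_\eps$, so strong $W^{1,2}_{\loc}(Z\setminus\{P\})$ convergence passes it to the limit at once. The right-hand side tends to $T_{a,b}(G_P)(P)=b-a$ simply because $p^Z_\eps(P,\cdot)\nu\rightharpoonup\delta_P$ against a fixed $C_c$ function.

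By truncating $G_\eps$ instead, your test function moves with $\eps$. You must then additionally prove three things:
\begin{itemize}
\item continuity of truncation in $W^{1,2}$, which is a dominated-convergence argument that again uses $|DG_P|=0$ a.e.\ on level sets;
\item that $\{a<G_\eps<b\}$ stays in a fixed compact subset of $Z\setminus\{P\}$;
\item that $G_\eps\ge b$ on a fixed neighbourhood of $P$ for all small $\eps$.
\end{itemize}
What you gain is an exact flux identity for $G_\eps$ itself, which the lemma does not need.

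The last of these is where your write-up needs repair. The bound $G_\eps\ge\int_\eps^{2\eps}p^Z_t(P,\cdot)\di t$, combined with the Gaussian lower bound, only controls points within distance $O(\sqrt\eps)$ of $P$. At distance $d\gg\sqrt\eps$ the factor $e^{-d^2/(c\eps)}$ destroys the bound. Uniform convergence, on the other hand, only covers $\{\dist_Z(P,\cdot)\ge\delta\}$ for a fixed $\delta$, so an intermediate region is left uncovered. Integrate instead over $[\tau,2\tau]$ with $\tau=\max\{\eps,d^2\}$. This gives $G_\eps\ge c\,\Phi_x(\max\{d,\sqrt\eps\})$, which is at least $b$ on $B^Z_\delta(P)$ once $\delta$ is small and $\eps\le\delta^2$. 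This fixes both places where you use the claim.

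Also, the uniformly bounded sets are $\{G_\eps\ge a\}\subset\{G_P\ge a\}$, not $\{G_\eps\le b\}$, which are unbounded since $G_\eps\to0$ at infinity.

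Your alternative is also valid: prove the identity for pairs of levels whose level sets are $\nu$-null, then pass to arbitrary $a<b$ by monotone convergence over increasing open bands.
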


\begin{proof}
Set \(T_{a,b}(t):=(t-a)_+\wedge(b-a)\),
Since \(G_P\to\infty\) at \(P\), \(T_{a,b}(G_P)\) is constant in a
neighborhood of \(P\); since \(\{G_P\ge a\}\) is compact, $T_{a,b}(G_P)$ has
compact support. Moreover, \(G_P\) is locally \(W^{1,2}\) and
continuous on \(Z\setminus\{P\}\), so
\[
 T_{a,b}(G_P)\in C_c(Z)\cap W_c^{1,2}(Z),
 \qquad
 DT_{a,b}(G_P)=\mathbf1_{\{a<G_P<b\}}DG_P.
\]
Testing the weak equation \eqref{eq:gbl-regularized} gives
\[
 \int_Z\langle DG_\eps,DT_{a,b}(G_P)\rangle\di\nu
 =\int_Z p_\eps^Z(P,\cdot)T_{a,b}(G_P)\di\nu.
\]
By the strong
\(W^{1,2}_{\loc}\)-convergence of \(G_\eps\) to \(G_P\), its limit is
\[
 \int_Z\langle DG_P,DT_{a,b}(G_P)\rangle\di\nu
 =\int_{\{a<G_P<b\}}|DG_P|^2\di\nu.
\]
On the right, the measures \(p_\eps^Z(P,\cdot)\nu\) converge weakly to
\(\delta_P\), and \(T_{a,b}(G_P)\) is continuous with compact support; hence
\[
 \int_Z p_\eps^Z(P,\cdot)T_{a,b}(G_P)\di\nu\to b-a.
\]
This completes the proof.
\end{proof}
\begin{lemma}
\label{lem:gbl-ordered}
Let $0<a<b\le c<d$ and $K_a:=\overline{\{G_P>a\}}$.
If $w\in W^{1,2}_{\loc}(U)$ is weakly subharmonic on an open
neighborhood $U$ of the compact set $K_a$, then
\begin{equation}
 \frac1{d-c}\int_{\{c<G_P<d\}}w\,|DG_P|^2\di\nu
 \le
 \frac1{b-a}\int_{\{a<G_P<b\}}w\,|DG_P|^2\di\nu.
 \label{eq:gbl-ordered}
\end{equation}
\end{lemma}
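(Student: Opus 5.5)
The idea is to define the band average $\mu(a,b):=\frac1{b-a}\int_{\{a<G_P<b\}}w\,|DG_P|^2\di\nu$ and show that $\mu$ is nonincreasing as the band moves toward $P$. Since $\{c<G_P<d\}$ lies closer to $P$ than $\{a<G_P<b\}$, this gives \eqref{eq:gbl-ordered}. The cleanest way is to test the subharmonicity of $w$ against a piecewise linear function of $G_P$.

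\textbf{Test function.} Set
\[
 \psi(t):=\frac{(t-a)_+\wedge(b-a)}{b-a}-\frac{(t-c)_+\wedge(d-c)}{d-c},
 \qquad \varphi:=\psi(G_P).
\]
Then $\psi=0$ for $t\le a$, $\psi$ rises to $1$ on $[a,b]$, equals $1$ on $[b,c]$, decreases to $0$ on $[c,d]$, and vanishes for $t\ge d$. Hence $\varphi\ge0$, and $\varphi$ vanishes near $P$ because $G_P\to\infty$ there. It also vanishes outside $\{G_P>a\}\subset K_a$, so it is compactly supported in $U$. Since $G_P$ is locally Lipschitz on $Z\setminus\{P\}$, $\varphi$ is Lipschitz with
\[
 D\varphi=\Bigl(\tfrac{\mathbf1_{\{a<G_P<b\}}}{b-a}-\tfrac{\mathbf1_{\{c<G_P<d\}}}{d-c}\Bigr)DG_P .
\]
Weak subharmonicity then gives $\int\langle Dw,D\varphi\rangle\di\nu\le0$.

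\textbf{Converting to the desired quantity.} The inequality above only involves $Dw$, but the target quantity is $\int w\,|DG_P|^2$. To bridge the two, I use the harmonicity of $G_P$ tested against $w\,\eta$, where
\[
 \eta:=\frac{T_{a,b}(G_P)}{b-a}-\frac{T_{c,d}(G_P)}{d-c}.
\]
This $\eta$ is exactly $\varphi$, and it is supported away from $P$ in a compact subset of $U\setminus\{P\}$, so the product rule gives
\[
 0=\int\langle DG_P,D(w\varphi)\rangle\di\nu=\int\varphi\langle DG_P,Dw\rangle\di\nu+\int w\langle DG_P,D\varphi\rangle\di\nu .
\]
The last term equals $\mu(a,b)-\mu(c,d)$. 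So it remains to show $\int\varphi\langle DG_P,Dw\rangle\le0$, which is a different term from the one the subharmonicity test controls.

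\textbf{Choosing the right test.} The two terms are reconciled by testing subharmonicity with $\Theta(G_P)$ instead of $\varphi$, where $\Theta'=\psi$ is a primitive. Take $\Theta(t)=\int_{t}^{\infty}\psi$ after flipping the sign convention, so that $\Theta\ge0$ is compactly supported in $\{G_P>a\}$ and is constant near $P$. The constant near $P$ is $\int_a^d\psi$, and $\int_a^d\psi$ is generally nonzero, so $\Theta$ itself is not compactly supported away from $P$.

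To fix this, I write $\Theta(G_P)=\int_a^d\psi-\widetilde\Theta(G_P)$. Here the constant part is supported on a neighborhood of $K_a$ inside $U$, and testing subharmonicity with it is fine: the constant is harmless because $w$ is defined near $P$ as well, and $U\supset K_a\ni P$.

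Subharmonicity tested with $\Theta(G_P)\ge0$ (Lipschitz with compact support in $U$) gives
\[
 \int\langle Dw,\Theta'(G_P)DG_P\rangle\le0,\qquad \Theta'=-\psi\ \text{(with the sign chosen)} .
\]
With the sign arranged so that $\Theta'=\psi$ and $\Theta\ge0$ (take $\Theta(t)=\int_0^t\psi$, which is constant $\ge0$ for $t\ge d$), this yields $\int\psi(G_P)\langle Dw,DG_P\rangle\le0$, i.e.\ $\int\varphi\langle DG_P,Dw\rangle\le0$. Combined with the identity above, this gives $\mu(c,d)\le\mu(a,b)$.

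\textbf{Main obstacle.} The hard part is justifying the harmonicity identity $\int\langle DG_P,D(w\varphi)\rangle=0$ when $w$ is only $W^{1,2}_{\loc}$. This needs $w\varphi\in W^{1,2}$ with compact support in $Z\setminus\{P\}$ where $G_P$ is harmonic; that holds because $\varphi$ is bounded, Lipschitz, and vanishes near $P$, but it requires approximating $w\varphi$ by Lipschitz functions. The same care is needed for testing subharmonicity with the non-Lipschitz-class function $\Theta(G_P)$, whose gradient is bounded on its support. I would handle both by density of $\Lip_c$ in $W^{1,2}_0$ of a neighborhood.
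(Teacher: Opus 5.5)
Your argument is correct and, once the sign detour is discarded, it is exactly the paper's proof. Your final $\Theta(t)=\int_0^t\psi$ is the paper's $\Psi$. The paper likewise tests the harmonicity of $G_P$ with $w\,\psi(G_P)=w\,\Psi'(G_P)$ and tests the subharmonicity of $w$ with $\Psi(G_P)\ge0$; the latter is constant near $P$, which is harmless because $P\in K_a\subset U$.

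You should delete the intermediate remarks about $\Theta(t)=\int_t^\infty\psi$. That function equals $\int_a^d\psi$ far from $P$ and vanishes near $P$, so it is not compactly supported, and the claims you make about it are false.
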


\begin{proof}
The set $K_a$ is a closed subset of the compact set $\{G_P\ge a\}$,
so $K_a\Subset U$.
Define
\[
 \Psi(t):=
 \frac{(t-a)_+^2-(t-b)_+^2}{2(b-a)}
 -\frac{(t-c)_+^2-(t-d)_+^2}{2(d-c)},
 \qquad t\ge0.
\]
Then \(\Psi\in C^{1,1}([0,\infty))\), \(\Psi\ge0\),
\(\Psi=0\) on \([0,a]\), and \(\Psi=\Psi(d)\) on \([d,\infty)\), with
\[
 \Psi'(t)=
 \frac{(t-a)_+-(t-b)_+}{b-a}
 -\frac{(t-c)_+-(t-d)_+}{d-c}
\]
and
\[
 \Psi''=\frac{\mathbf1_{(a,b)}}{b-a}
         -\frac{\mathbf1_{(c,d)}}{d-c}
 \quad\text{a.e.}
\]
Since \(G_P\) diverges at \(P\), the function \(\Psi(G_P)\), extended
at \(P\) by \(\Psi(d)\), is constant near \(P\). Its support is
contained in \(K_a\Subset U\), and the chain rule gives
\[
 D\Psi(G_P)=\Psi'(G_P)DG_P.
\]
Also,
\[
 \supp\Psi'(G_P)\subset K_a\cap\{G_P\le d\}
 \Subset U\setminus\{P\},
\]
so the zero extension of \(w\Psi'(G_P)\) is an admissible test
function.
The chain rule gives
\[
 0=\int_U\Psi'(G_P)\langle DG_P,Dw\rangle\di\nu
  +\int_Uw\,\Psi''(G_P)|DG_P|^2\di\nu.
\]
On the other hand, \(\Psi(G_P)\ge0\) and
\(\supp\Psi(G_P)\Subset U\), so the weak subharmonicity of \(w\) gives
\[
 0\le-\int_U\langle Dw,D\Psi(G_P)\rangle\di\nu
   =-\int_U\Psi'(G_P)\langle Dw,DG_P\rangle\di\nu
   =\int_Uw\,\Psi''(G_P)|DG_P|^2\di\nu.
\]
Substituting the displayed formula for \(\Psi''\) yields
\[
 0\le\frac1{b-a}\int_{\{a<G_P<b\}}w|DG_P|^2\di\nu
      -\frac1{d-c}\int_{\{c<G_P<d\}}w|DG_P|^2\di\nu,
\]
which is \eqref{eq:gbl-ordered}.
\end{proof}

\subsection{Proof of Theorem~\ref{thm:gbl-averages}}
\label{subsec:gbl-completion}

\begin{proof}
Let $c,C>0$ be the constants in \eqref{eq:gbl-green-bounds} and fix an
integer $m\ge3$ such that $C2^{-m}\le c/2$; since $c,C$
depend only on $N$ and $K_-R^2$, so does $m$. Set
\[
 s_j:=2^{-mj}\frac{R}{64},
 \qquad
 \alpha_j:=\max_{\partial B_{s_j}^Z(P)}G_P,
 \qquad \beta_j:=\min_{\partial B_{s_{j+1}}^Z(P)}G_P.
\]
Each sphere $\partial B_s^Z(P)$ is nonempty, it contains the
Euclidean sphere $\{x\}\times\{z\in\setR^3:|z|=s\}$, and compact,
since $Z$ is proper. $\alpha_j,\beta_j$ are
attained because $G_P$ is continuous on $Z\setminus\{P\}$. By
\eqref{eq:gbl-green-bounds} and \eqref{eq:gbl-gauge},
\[
\begin{aligned}
 0<\alpha_j\le C\Phi_x(s_j)
 \le C2^{-m}\Phi_x(s_{j+1}),\qquad
 c\Phi_x(s_{j+1})\le\beta_j\le C\Phi_x(s_{j+1}).
\end{aligned}
\]
Since $\beta_j$ and $\alpha_{j+1}$ are the minimum and maximum of the
same function on the same sphere, and $C2^{-m}\le c/2$,
\begin{equation}
 0<\alpha_j<\beta_j\le\alpha_{j+1},
 \qquad
 \frac c2\Phi_x(s_{j+1})
 \le\beta_j-\alpha_j\le C\Phi_x(s_{j+1}).
 \label{eq:gbl-level-choice}
\end{equation}
Define
\[
 \di\sigma_j:=
 \frac{\mathbf1_{\{\alpha_j<G_P<\beta_j\}}|DG_P|^2}
      {\beta_j-\alpha_j}\di\nu.
\]
Lemma~\ref{lem:gbl-flux} gives $\sigma_j(Z)=1$.
The proof of part~\textup{(ii)} only requires
Lemma~\ref{lem:gbl-ordered} and the localization established next.
We prove part~\textup{(ii)} first and complete the projected-density
estimate in part~\textup{(i)} afterwards.

\smallskip\noindent
\emph{Localization in part~\textup{(i)}.}
We first establish the localization needed in part~\textup{(ii)}.
We shall use the following elementary observation on each connected
component of the domains below: if \(h\) is harmonic on a connected
open set and \(h_+\) is compactly supported in it, then \(h\le0\).
Indeed, testing the weak equation for \(h\) with \(h_+\) gives
\[
 0=\int\langle Dh,Dh_+\rangle\di\nu
  =\int_{\{h>0\}}|Dh|^2\di\nu.
\]
Thus \(Dh_+=0\), and \(h_+\) is locally constant. As it has compact
support, \(h_+=0\).

For \(\eta>0\), apply this observation to
\[
 h^-:=G_P-\alpha_j-\eta
 \quad\text{on }Z\setminus\overline B_{s_j}^Z(P).
\]
Since \(G_P\le\alpha_j\) on
\(\partial B_{s_j}^Z(P)\) and \(G_P\to0\) at infinity, the positive
part of \(h^-\) is compactly supported in this domain. Hence \(h^-\le0\).
Letting \(\eta\downarrow0\) and using continuity, we obtain
\[
 G_P\le\alpha_j\quad\text{on }
 Z\setminus B_{s_j}^Z(P).
\]
Likewise, apply the observation to the harmonic function
\[
 h^+:=\beta_j-\eta-G_P
 \quad\text{on }B_{s_{j+1}}^Z(P)\setminus\{P\}.
\]
Its positive part is compactly supported, because \(G_P\ge\beta_j\)
on \(\partial B_{s_{j+1}}^Z(P)\) and \(G_P\to+\infty\) at \(P\).
It follows that \(h^+\le0\); letting \(\eta\downarrow0\) and using
continuity yields
\[
 G_P\ge\beta_j\quad\text{on }
 \overline B_{s_{j+1}}^Z(P).
\]
Consequently,
\[
 E_j:=\{\alpha_j<G_P<\beta_j\}
 \subset\{s_{j+1}<\dist_Z(P,\cdot)<s_j\},
\]
and \(\supp\sigma_j\) lies in the corresponding closed annulus.

\smallskip\noindent
\emph{Proof of \textup{(ii)}}
Let $U$ be an open neighborhood of
$\overline B_{s_j}^Z(P)$, and let
\(w\in W^{1,2}_{\loc}(U)\) be weakly subharmonic in \(U\).
We apply Lemma~\ref{lem:gbl-ordered} with
\[
 a=\alpha_j,\qquad b=\beta_j,\qquad
 c=\alpha_{j+1},\qquad d=\beta_{j+1}.
\]
By \eqref{eq:gbl-level-choice}, for every \(j\),
\[
 0<a<b\le c<d.
\]
Moreover, the exterior localization just proved gives
\[
 K_{\alpha_j}:=\overline{\{G_P>\alpha_j\}}
 \subset\overline B_{s_j}^Z(P)\Subset U.
\]
The remaining assumptions of the lemma are precisely the
regularity and weak subharmonicity of \(w\). Thus
Lemma~\ref{lem:gbl-ordered} applies. In view of the definitions,
its conclusion is exactly
\[
 \int_Zw\di\sigma_{j+1}\le\int_Zw\di\sigma_j.
\]

\smallskip\noindent
\emph{Projected density in part~\textup{(i)}.}
It remains to prove the localized support and the projected
density bound asserted in part~\textup{(i)}. The localization of
the level sets has already been established above.
The gradient estimate \eqref{eq:gbl-green-bounds} and gauge
monotonicity \eqref{eq:gbl-gauge} give
$|DG_P|\le C\Phi_x(s_{j+1})/s_{j+1}$ almost everywhere on \(E_j\).
Together with \eqref{eq:gbl-level-choice} and
$\Phi_x(s)=s^2/V_x(s)$, this yields
\[
\begin{aligned}
\frac{\di\sigma_j}{\di\nu}
 =\frac{\mathbf1_{E_j}|DG_P|^2}{\beta_j-\alpha_j}
 \le C\frac{\Phi_x(s_{j+1})}{s_{j+1}^2}\mathbf1_{E_j}
 \le\frac{C}{V_x(s_{j+1})}\mathbf1_{B_{s_j}^Z(P)}.
\end{aligned}
\]
Thus $\sigma_j$ has bounded density and compact support.
By Tonelli's theorem and the product metric, for
$\meas$-almost every $y$,
\[
\begin{aligned}
 \frac{\di(\pi_X)_\#\sigma_j}{\di\meas}(y)
 =\int_{\setR^3}\frac{\di\sigma_j}{\di\nu}\di z
 \le
 \frac{C\bigl(s_j^2-\dist(x,y)^2\bigr)_+^{3/2}}
      {V_x(s_{j+1})}
 \le
 \frac{Cs_j^3}{V_x(s_{j+1})}\mathbf1_{B_{s_j}(x)}(y).
\end{aligned}
\]
Since $s_j/s_{j+1}=2^m$ is fixed, the volume estimate
\eqref{eq:gbl-volume-local} and relative volume comparison imply
\[
 \frac{s_j^3}{V_x(s_{j+1})}
 \le\frac{C2^{3m}}{\meas(B_{s_{j+1}}(x))}
 \le\frac{C}{\meas(B_{s_j}(x))}.
\]
Hence
\begin{equation}
 \frac{\di(\pi_X)_\#\sigma_j}{\di\meas}(y)
 \le
 \frac{C\mathbf1_{B_{s_j}(x)}(y)}{\meas(B_{s_j}(x))}
 \quad\text{for }\meas\text{-a.e. }y.
 \label{eq:gbl-projection-proof}
\end{equation}
This completes the proof of~\textup{(i)}.

\smallskip\noindent
\emph{Proof of \textup{(iii)}: recovery at Lebesgue points.}
Let $x$ be a Lebesgue point of $F\in L^1_{\loc}(X)$. Since
$\sigma_j(Z)=1$ and $s_j\downarrow0$, \eqref{eq:gbl-projection-proof}
gives
\[
\begin{aligned}
 \left|\int_ZF(y)\di\sigma_j(y,z)-F(x)\right|
 &\le\int_Z|F(y)-F(x)|\di\sigma_j(y,z)\\
 &\le C\fint_{B_{s_j}(x)}|F(y)-F(x)|\di\meas(y)\to0,
\end{aligned}
\]
where the last convergence holds by the Lebesgue differentiation
theorem recalled in Section~\ref{subsec:ms-background}. This
proves~\textup{(iii)} and completes the proof of
Theorem~\ref{thm:gbl-averages}.
\end{proof}

{\setlength{\emergencystretch}{2em}

}

\bigskip
{\small
\noindent\textsc{Bang-Xian Han}\\
School of Mathematics, Shandong University, Jinan 250100, China\\
\textit{Email address:} \href{mailto:hanbx@sdu.edu.cn}{\texttt{hanbx@sdu.edu.cn}}
\par\medskip
\noindent\textsc{Zeyu Wang}\\
School of Mathematics, Shandong University, Jinan 250100, China\\
\textit{Email address:} \href{mailto:202520995@mail.sdu.edu.cn}{\texttt{202520995@mail.sdu.edu.cn}}
\par
}

\end{document}